\documentclass[twoside,reqno]{amsart}

\usepackage{amssymb}
\usepackage{amsmath,amsfonts}
\usepackage[showonlyrefs]{mathtools}
\usepackage{bm}

\usepackage{tikz-cd}

\usepackage[a4paper,
	inner		= 2.5cm,
	outer		= 3.5cm,
	top			= 1.5cm,
	bottom		= 2.0cm,
	includeheadfoot
	]{geometry}

\usepackage[
	colorlinks	= true,
	allcolors 	= blue
	]{hyperref}

\usepackage{amsthm}
\theoremstyle	{plain}
\newtheorem		{theorem}					{Theorem}	[section]
\newtheorem		{lemma}			[theorem]	{Lemma}
\newtheorem		{corollary}		[theorem]	{Corollary}
\newtheorem		{proposition}	[theorem]	{Proposition}

\newtheorem		{assumption}	[theorem]	{Assumption}
\theoremstyle	{definition}
\newtheorem		{definition}	[theorem]	{Definition}
\newtheorem		{remark}		[theorem]	{Remark}
\newtheorem*	{remark*}					{Remark}

\numberwithin{equation}{section}

\newcommand{\nR}{{\mathbb R}}
\newcommand{\nC}{{\mathbb C}}
\newcommand{\nCR}{\nC\setminus\nR}

\newcommand{\cH}{{\mathcal H}}

\newcommand{\cL}{{\mathcal L}}

\newcommand{\cS}{{\mathcal S}}

   \def\sH{{\mathfrak H}}   
      
   \def\sN{{\mathfrak N}}

      \def\dC{{\mathbb C}}

   \def\dN{{\mathbb N}}   
      \def\dR{{\mathbb R}}

   \def\cB{{\mathcal B}}   
      
   \def\cH{{\mathcal H}}   
      \def\cL{{\mathcal L}}

\def\cS{{\mathcal S}}   \def\cT{{\mathcal T}}

\newcommand{\wt}{\widetilde}
\newcommand{\wh}{\widehat}

\DeclareMathOperator{\ran}{ran}

\DeclareMathOperator{\dom}{dom}
\DeclareMathOperator{\mul}{mul}

\DeclareMathOperator{\loc}{loc}
\DeclareMathOperator{\spn}{span}

\DeclareMathOperator{\supp}{supp}
\DeclareMathOperator{\comp}{comp}
\DeclareMathOperator{\clos}{clos}

\renewcommand{\Im}{\operatorname{Im}}

\newcommand*\conj[1]{\overline{#1}}

\newcommand{\dual}{\widehat S[R_1,R_2]}

\setcounter{tocdepth}{1}

\allowdisplaybreaks

\title{On a class of integral systems}

\begin{document}
\author{Volodymyr Derkach}
\address{
	Institut f\"ur  Mathematik,
	Technische Universit\"{a}t Ilmenau,
		Germany}
\address{
	Department of Mathematics,
	Vasyl Stus Donetsk National University,
	Vinnytsya, Ukraine}
\email{volodymyr.derkach@tu-ilmenau.de}

\author{Dmytro Strelnikov}
\address{
	Institut f\"ur  Mathematik,
	Technische Universit\"{a}t Ilmenau,
		Germany}
\email{dmytro.strelnikov@tu-ilmenau.de}

\author{Henrik Winkler}
\address{
	Institut f\"ur  Mathematik,
	Technische Universit\"{a}t Ilmenau,
		Germany}
\email{henrik.winkler@tu-ilmenau.de}

\thanks{The research of the first author was supported by
a grant of the German Research Foundation (DFG, grant TR 903/22-1) and
a grant of the Volkswagen Foundation.}

\begin{abstract}
We study spectral problems for two--dimensional integral system
with two  given non-decreasing functions $R_1$,  $R_2$ on an interval $[0,b)$
which is a generalization of the Krein string.
Associated to this system are the maximal linear relation $T_{\max}$ and the minimal linear relation $T_{\min}$
in the space $L^2(R_2)$
which are connected by $T_{\max}=T_{\min}^*$. It is shown that the limit point condition at $b$ for this system  is equivalent to the strong limit point condition
for the linear relation $T_{\max}$. In the limit circle case the strong limit point condition fails to hold on $T_{\max}$ but it is still satisfied on a subspace $T_N^*$ of $T_{\max}$ characterized by the Neumann boundary condition at $b$.
The notion of the principal Titchmarsh-Weyl coefficient of this integral system is introduced both in the limit point case and in the limit circle case.
Boundary triples for the linear relation $T_{\max}$ in the limit point case (and for $T_{N}^*$ in the limit circle case) are constructed and it is shown that the corresponding Weyl function coincides with the principal Titchmarsh-Weyl coefficient of the integral system.
The notion of the dual integral system  is introduced by reversing the order of  $R_1$ and  $R_2$.
It is shown that
the principal Titchmarsh-Weyl coefficients $q$ and $\wh q$ of the direct and the dual integral systems are related  by the equality
$\lambda \wh q(\lambda) = -1/q(\lambda)$
both in the regular and the singular case.

\end{abstract}

\subjclass{Primary 34B24; Secondary 34L05, 47A06, 47A57, 47B25, 47E05}

\date{\today}

\keywords{integral systems, Kre\u{\i}n strings, dual systems, principal Titchmarsh-Weyl coefficient, boundary triples, symmetric linear relations}

\maketitle

\section{Introduction} \label{sec:intro}

In this paper spectral problems for integral systems, associated dual systems and, in particular, Krein strings are investigated.
We consider an integral system of the form
\begin{equation}\label{eq:IntSys}
	u(x,\lambda) = u(0,\lambda) -J \int_0^x
	\begin{bmatrix} \lambda dR_2(t) & 0   \\  0 & dR_1(t)\end{bmatrix} u(t,\lambda),\quad J=\begin{bmatrix} 0 & -1   \\  1 & 0\end{bmatrix},\quad
\end{equation}
where $u = [u_1\ u_2]^T$, with some spectral parameter $\lambda\in\dC$ and measures $dR_1$ and $dR_2$ associated with non-decreasing functions $R_1(x)$ and $R_2(x)$ on an interval $[0,b)$, see~\cite{Ben89}.
If $R_1(x)\equiv x$ then $u_2=u_1'$ and system \eqref{eq:IntSys} is reduced to the equation of a vibrating string in the sense of M.\,G.~Krein
\begin{equation}\label{eq:Krein_S}
	u_1(x,\lambda) = u_1(0,\lambda) +x u_1'(0,\lambda) - \lambda \int_0^x (x-t) u_1(t,s)\, dR_2(t),
	\quad x\in[0,b).
\end{equation}
Integral systems \eqref{eq:IntSys} arise in the theory of diffusion processes with two measures
\cite{Man68,LaSc90}.
In the theory of stochastic processes the equation~\eqref{eq:Krein_S} describes generalized diffusion processes which includes both diffusion processes and birth and death processes ~\cite{Fel57,Fel59,IMcK65,Kas75}.
In mechanics the equation~\eqref{eq:Krein_S} describes small transverse oscillations of the string with the mass distribution function $R_2(x)$,~\cite{KacK68}.
Relations for the Titchmarsh-Weyl coefficients of Krein strings and their associated dual strings
were studied in~\cite{KacK68, KWW07}.

Let $c(\cdot,\lambda)$ and $s(\cdot,\lambda)$ be the unique solutions of~\eqref{eq:Krein_S} satisfying the initial conditions
\begin{equation}
	c(0,\lambda) = 1,\ c'(0,\lambda) = 0,
	\quad \text{and} \quad
	s(0,\lambda) = 0,\ s'(0,\lambda) = 1.
\end{equation}
The function
\begin{equation}\label{eq:m_NK}
	q_S(\lambda) \coloneqq \lim_{x\to b}\frac{s(x,\lambda)}{c(x,\lambda)}
\end{equation}
is called \emph{the principal Titchmarsh-Weyl coefficient of the string} \cite{KWW07} or \emph{the dynamic compliance coefficient} in the terminology of I.\,S.~Kac and M.\,G.~Krein \cite{KacK68} and describes the spectral properties of the string. The principal Titchmarsh-Weyl coefficient $q(\lambda)$ is a Stieltjes function and the measure $d\sigma$ from its integral representation
\begin{equation}\label{eq:Int_Rep_mS}
	q_S(\lambda)=a+\int_0^\infty\frac{d\sigma(t)}{t-\lambda}, \quad a\ge 0
\end{equation}
is the spectral measure of the string $S_1[b,R_2]$, which in the limit point case is given by the boundary condition $u'(0)=0$.

Denote the integral system~\eqref{eq:IntSys} by $S[R_1,R_2]$.
In the present paper we define \emph{the principal Titchmarsh-Weyl coefficient} $q$ of the integral system $S[R_1,R_2]$ by
\begin{equation}\label{eq:m_TW}
	q(\lambda) \coloneqq \lim_{x\to b}\frac{s_1(x,\lambda)}{c_1(x,\lambda)},
\end{equation}
where
$c_1(\cdot, \lambda)$, $c_2(\cdot, \lambda)$ and $s_1(\cdot, \lambda)$, $s_2(\cdot, \lambda)$
are pairs of the unique solutions of~\eqref{eq:IntSys} satisfying the initial conditions
\begin{equation} \label{eq:csK}
	c_1(0,\lambda) = 1,\ c_2(0,\lambda) = 0,
	\quad \text{and} \quad
	s_1(0,\lambda) = 0,\ s_2(0,\lambda) = 1.
\end{equation}
Formula~\eqref{eq:m_TW} requires justification. For this purpose we use the operator approach
to the integral system $S[R_1,R_2]$ developed in~\cite{Str19}, the boundary triples technique from~\cite{Koc75,GG91} and the theory of associated Weyl functions as introduced in ~\cite{DM91,DM95}.
The maximal linear relation $T_{\max}$ is defined as the set of pairs
$\bm{u} = [u_1\ f]^T$ such that $u_1,f\in L^2(R_2)$ and the equation~\eqref{eq:sys_lin_rel} is satisfied for some $u_2\in BV_{\loc}[0,b)$, see Definition~\ref{def:Tmax}.
The closure of its restriction to the set of compactly supported functions is called the minimal linear relation $T_{\min}$.
 In~\cite{Str19} it is shown that $T_{\min}$ is symmetric in $L^2(R_2)$, $T_{\max}=T_{\min}^*$
 and boundary triples for the linear relation $T_{\min}$ were constructed both in the limit point and in the limit circle case.

In Theorem~\ref{thm:SLP} we show that the system $S[R_1,R_2]$
is in the limit point case at $b$ if and only if it satisfies \emph{the strong limit point condition} at $b$, see~\cite{Ever76}, which in our case is of the form
\begin{equation}\label{eq:SLP}
	\lim_{x\to b}u_1(x)u_2(x) = 0
	\quad \text{for all} \quad \bm{u} \in T_{\max}.
\end{equation}
As a consequence of~\eqref{eq:SLP} we conclude that in the limit point case the linear relation $T_{\min}$ and its von Neumann extension $A_N$, characterized by the boundary condition $u_2(0)=0$, are nonnegative,
the corresponding Weyl function is a Stieltjes function and coincides with the principal Titchmarsh-Weyl coefficient of the system $S[R_1,R_2]$. The strong limit point condition for second order differential operators was introduced by W. Everitt~\cite{Ever76}.

In the limit circle case the linear relation $T_{\min}$ has defect numbers $(2,2)$, in this case an intermediate symmetric extension $T_N$ with defect numbers $(1,1)$ of $T_{\min}$ is considered as the restriction of $T_{\max}$ to the set of elements $\bm{u}\in T_{\max}$ such that $u_1(0)=u_2(0)=u_2(b)=0$.
In this case we show in Lemma~\ref{lem:psi} that the strong limit point condition
\eqref{eq:SLP} fails to hold, but still the limit in~\eqref{eq:SLP} is vanishing on the subspace $T_{N}^*$ of $T_{\max}$, i.e.
the following \emph{Evans--Everitt condition} holds, cf.~\cite{EvEver}:
\begin{equation}\label{eq:Ever}
	\lim_{x\to b} u_1(x)u_2(x) = 0
	\quad \text{for all} \quad \bm{u}\in T_{N}^*.
\end{equation}
This result implies the nonnegativity of the linear relation $T_{N}$ and its
selfadjoint extension
\begin{equation*}
	A_N = \{\bm{u} \in T_{\max} \colon u_2(0) = u_2(b) = 0\}.
\end{equation*}
In~\cite{Kost13} another analytical object ---
\emph{the Neumann $m$-function} of the system $S[R_1,R_2]$ was introduced by the equality
\begin{equation}\label{eq:m_N}
	m_N(\lambda) \coloneqq \lim_{x\to b} \frac{s_2(x,\lambda)}{c_2(x,\lambda)},
\end{equation}
which is a special case of a more general definition of the Neumann $m$-function presented in~\cite{Ben89}.
In Proposition~\ref{prop:BT_IS_N} it is shown that the Neumann $m$-function $m_N(\lambda)$ is a Stieltjes function and it coincides with the principal Titchmarsh-Weyl coefficient of the integral system $S[R_1,R_2]$.

The system $S[R_1,R_2]$
is called \emph{regular} if $R_1(b)+R_2(b)<\infty$ and \emph{singular} otherwise.
In the regular case we construct \emph{the canonical singular extension} $S[\wt R_1,\wt R_2]$ of the system $S[R_1,R_2]$ with $R_1, R_2$ extended to non-decreasing functions $\wt R_1, \wt R_2$ on the interval $(0,\infty)$, so that the principal Titchmarsh-Weyl coefficients of both systems coincide.

\emph{The dual system} $\dual$ of the integral system $S[R_1,R_2]$ in the singular case is obtained by changing the roles of $R_1$ and $R_2$.
In the regular case the dual system of the integral system $S[R_1,R_2]$ is defined as the dual of the canonical singular extension $S[\wt R_1,\wt R_2]$ of the system $S[R_1,R_2]$.
The main result of the paper is Theorem~\ref{thm:2} where it is shown that
the principal Titchmarsh-Weyl coefficient $\wh q$ of the dual system is related to the principal Titchmarsh-Weyl coefficient $q$ of the system $S[R_1,R_2]$
by the equality
\begin{equation}\label{eq:mS_whmS}
	\wh q(\lambda) = -\frac{1}{\lambda q(\lambda)}.
\end{equation}
both in the regular and the singular case.

In the case of a string ($R_1(x)=x$) the notion of the dual string and the formula~\eqref{eq:mS_whmS}
connecting the principal Titchmarsh-Weyl coefficients of the direct and the dual string in the singular case was presented in~\cite{KacKr58}. In~\cite{KWW07} some further relations between strings, dual strings and canonical systems of differential equations were studied. Analogues of these relations between integral systems and canonical systems can also be established and will be presented in a forthcoming paper.

\section{Preliminaries}
\subsection{Linear relations}
\label{subsec:pre:lr}
Let $\sH$ be a Hilbert space.
 A linear relation $T$ in $\sH$
is a linear subspace of $\sH \times \sH$.
Let us recall some basic definitions and
properties associated with linear relations
in~\cite{Arens,Ben72}.

The \emph{domain}, the \emph{range}, the \emph{kernel}, and the \emph{multivalued part} of a linear relation $T$ are defined as follows:
\begin{align}
	\dom{T} &\coloneqq \left\{ f \colon \begin{bmatrix} f \\ g \end{bmatrix} \in T \right\}, &
	\ran{T} &\coloneqq \left\{ g \colon \begin{bmatrix} f \\ g \end{bmatrix} \in T \right\},\\
	\ker{T} &\coloneqq \left\{ f \colon \begin{bmatrix} f \\ 0 \end{bmatrix} \in T \right\}, &
	\mul{T} &\coloneqq \left\{ g \colon \begin{bmatrix} 0 \\ g \end{bmatrix} \in T \right\}.
\end{align}
The \emph{adjoint} linear relation $T^*$ is defined by
\begin{equation}
	T^* \coloneqq \left\{
	\begin{bmatrix} u \\ f \end{bmatrix}
	\in \sH \times \sH \colon \langle f,v\rangle_{\sH} = \langle u,g\rangle_{\sH}\ \text{for any}\
	\begin{bmatrix} v \\ g \end{bmatrix}
	\in T \right\}.
\end{equation}
A linear relation $T$ in $\sH$ is called \emph{closed} if $T$ is closed as a subspace of $\sH \times \sH$.
The set of all closed linear operators (relations) is denoted by $\mathcal{C}(\sH)$ ($\wt{\mathcal{C}}(\sH)$).
Identifying a linear operator $T \in \mathcal{C}(\sH)$ with its graph one can consider $\mathcal{C}(\sH)$ as a part of $\wt{\mathcal{C}}(\sH)$.

Let $T$ be a closed linear relation, $\lambda \in \mathbb{C}$, then
	\begin{equation}
		T - \lambda I \coloneqq \left\{ \begin{bmatrix} f \\ g-\lambda f \end{bmatrix} \colon
		\begin{bmatrix} f \\ g \end{bmatrix}
		\in T \right\}.
	\end{equation}
	A point $\lambda \in \mathbb{C}$ such that $\ker{\left( T - \lambda I \right)} = \{0\}$ and $\ran{\left( T - \lambda I \right)} = \sH$ is called a \emph{regular point} of the linear relation $T$. Let $\rho(T)$ be the set of regular points.
	The \emph{point spectrum} $\sigma_p(T)$ of the linear relation $T$
	is defined by
	\begin{equation}\label{eq:Point_s}
		\sigma_p(T) \coloneqq
		\{\lambda\in\mathbb{C} \colon \ker(T-\lambda I)\ne\{0\}\},
	\end{equation}

A linear relation $T$ is called \emph{symmetric} if $T \subseteq T^*$.
A point $\lambda \in \mathbb{C}$ is called a \emph{point of regular type} (and is written as $\lambda \in \widehat{\rho}(T)$) for a closed symmetric linear relation $T$, if $\lambda \notin \sigma_p(T)$ and the subspace $\ran(T-\lambda I)$ is closed in $H$.
For $\lambda \in \widehat{\rho}(T)$ let us set
$\sN_{\lambda}(T^*) \coloneqq \ker (T^* -\lambda I)$ and
\begin{equation}
	\widehat{\sN}_{\lambda}(T^*)  \coloneqq
	\left\{
	\bm{u}_{\lambda} =
	\begin{bmatrix} u_{\lambda} \\ \lambda u_{\lambda} \end{bmatrix} \colon u_{\lambda} \in \sN_{\lambda}(T^*)
	\right\}.
\end{equation}
The \emph{deficiency indices} of a symmetric linear relation $T$ are defined as
\begin{equation}
	n_{\pm}(T) \coloneqq \dim \ker (T^* \mp iI).
\end{equation}

\subsection{Boundary triples and Weyl functions}
\label{subsec:pre:triples}
Let $T$ be a symmetric linear relation with deficiency indices $(1,1)$.
In the case of a densely defined operator the notion of the boundary triple was introduced in \cite{Koc75,GG91}.
Following the papers \cite{M92,DM95} we shall give a definition of a boundary triple for the linear relation $T^*$.

\begin{definition} \label{def:btriple}
	A tuple $\Pi = (\dC,\Gamma_0,\Gamma_1)$, where $\Gamma_0$ and $\Gamma_1$ are linear mappings from $T^*$ to $\dC$, is called a \emph{boundary triple} for the linear relation $T^*$, if:
	\begin{enumerate}
	\item [(i)]
	for all
	$\bm{u} = \begin{bmatrix} u \\ f \end{bmatrix}$,
	$\bm{v} = \begin{bmatrix} v \\ g \end{bmatrix} \in T^*$
	the following generalized Green's identity holds
	\begin{equation} \label{eq:1.9}
		\langle f,v \rangle_{\sH} - \langle u,g \rangle_{\sH} =\Gamma_1 \bm{u} \conj{\Gamma_0\bm{v}} -\Gamma_0 \bm{u} \conj{\Gamma_1\bm{v}};
	\end{equation}
	\item [(ii)]
	the mapping
	$\Gamma=\begin{bmatrix}\Gamma_0 \\ \Gamma_1\end{bmatrix} \colon	T^* \rightarrow \dC^2$
	is surjective.
	\end{enumerate}
\end{definition}

Notice, that in contrast to~\cite{M92} the linear relation $T$ is not supposed to be single-valued.
The following linear relations
\begin{equation} \label{e q:A0A1}
	A_0 \coloneqq \ker \Gamma_0, \qquad A_1 \coloneqq \ker \Gamma_1
\end{equation}
are selfadjoint extensions of the symmetric linear relation $T$.

\begin{definition}[\cite{DM91,DM95}] \label{def:M_gamma}
	Let $\Pi = (\dC,\Gamma_0,\Gamma_1)$ be a boundary triple for the linear relation $T^*$.
	The scalar function $m(\cdot)$ and the vector valued function $\gamma(\cdot)$ defined by
	\begin{equation}\label{def:Weyl}
	 	m(\lambda) \Gamma_0 \bm{u}_\lambda = \Gamma_1 \bm{u}_\lambda, \quad
	 	\gamma(\lambda) \Gamma_0 \bm{u}_\lambda = u_{\lambda}, \quad
	 	\bm{u}_\lambda=
	\begin{bmatrix} u_{\lambda} \\ \lambda u_{\lambda} \end{bmatrix} \in \widehat{\sN}_{\lambda}(T^*), \quad
	 	\lambda \in \rho(A_0)
	\end{equation}
	are called \emph{the Weyl function} and \emph{the $\gamma$-field} of the symmetric linear relation $T$ corresponding to the boundary triple $\Pi$.
\end{definition}

The Weyl function and the $\gamma$-field are connected via the next identity (see \cite{DM95})
\begin{equation} \label{eq:weyl-id}
	m(\lambda) - m(\zeta)^* =
	(\lambda - \conj{\zeta}) \gamma(\zeta)^* \gamma(\lambda),
	\quad
	\lambda,\zeta \in \rho(A_0).
\end{equation}

\begin{definition}[\cite{KaKr74}]
	A function
	$m \colon \mathbb{C}\setminus\dR\to \cB (\cH)$ is said
	to be a \emph{Herglotz-Nevanlinna function}, if the following conditions hold:
	\begin{enumerate}
		\item [(i)]
		$m$ is holomorphic in $\mathbb{C}\setminus\dR$;
		\item [(ii)]
		$\Im m(\lambda) \geq 0$ as $\lambda \in \mathbb{C}_+:=\{\lambda\in\dC: \text{Im }\lambda>0\}$;
		\item [(iii)]
		$m(\conj{\lambda}) = m^*(\lambda)$ for $\lambda \in \mathbb{C}\setminus\dR$.
	\end{enumerate}
\end{definition}

It follows from~\eqref{eq:weyl-id} that the Weyl function $m(\cdot)$ is a Herglotz-Nevanlinna function.
A Herglotz-Nevanlinna function $m$ which admits a holomorphic continuation to $\dR_-$
and takes nonnegative values for all $\lambda\in\dR_-$ is called a \emph{Stieltjes function}.
Every Stieltjes function $m$ admits an integral representation
\eqref{eq:Int_Rep_mS} with a non-decreasing function $\sigma(t)$ such that $\int_{\dR_+}(1+t)^{-1}d\sigma(t)<\infty$.

\subsection{Minimal and maximal relations associated with the integral system $S[R_1,R_2]$ }

Let $I=[0,b)$ be an interval with $b\le\infty$, let $R(x)$ be a non-decreasing left-continuous function on $I$ such that $R(0)=0$, let $dR$ be the corresponding Lebesgue-Stieltjes measure, and let $\cL^2(R,I)$ be an inner product space which consists of complex valued functions $f$ such that
\begin{equation}
	\int_I |f(t)|^2\, dR(t) < \infty
\end{equation}
with inner product defined by
\begin{equation}
	\langle f,g \rangle_{R} = \int_I f(t) \conj{g(t)} dR(t).
\end{equation}
$\cL^2_{\comp}(R,I)$ denotes the subspace consisting of those $f\in \cL^2(R,I)$ with compact support in $I$, $BV[0,b)$ denotes the set of functions of bounded variation on $[0,b)$ and $BV_{\loc}[0,b)$ is the set of functions $f$ such that $f\in BV[0,b')$ for every $b'<b$.
Denote by $L^2(R,I)$ the corresponding quotient space for $\cL^2(R,I)$, which consists of equivalence classes w.r.t. $dR$ and denote by $\pi$ the corresponding quotient map, i.e. $\pi \colon \cL^2(R,I) \to L^2(R,I)$. Further we omit $I$ in the notation if it coincides with $[0,b)$.

From now on the following convention is used for the integration limits for any measure $d\sigma$ on an interval:
\begin{equation}
	\int_a^x f\, d\sigma \coloneqq \int_{[a,x)} f\, d\sigma.
\end{equation}
Thus, an integral as a function of its upper limit is always left-continuous.
With every function of bounded variation $f$ we associate the left-continuous and the right-continuous functions $f_-$ and $f_+$ defined by
\begin{equation}
	f_-(x) \coloneqq \lim_{t\uparrow x} f(t), \quad
	f_+(x) \coloneqq \lim_{t\downarrow x} f(t).
\end{equation}

Let $u$ and $v$ be left-continuous functions of bounded variation, $du$ and $dv$ be the corresponding Lebesgue-Stieltjes measures.
The following integration-by-parts formula for the Lebesgue-Stieltjes integral (see e.g. \cite{Hew60}) is used throughout the paper
\begin{equation} \label{eq:in-by-parts}
	\int_a^x u\, dv + \int_a^x v_+\, du = u(x)v(x) - u(a)v(a).
\end{equation}

If $u$ and $u_+$ have no zeros then it follows with $v=1/u$ from~\eqref{eq:in-by-parts}
\begin{equation*}
	d(1) = d \left( \frac{u}{u} \right) =
	u\, d\left( \frac{1}{u} \right) + \frac{1}{u_+}\, du = 0.
\end{equation*}
This leads to the quotient-rule formula
\begin{equation} \label{eq:quotient-rule}
	d\left( \frac{1}{u} \right) = -\frac{du}{u u_+}.
\end{equation}

The following existence and uniqueness theorem for integral systems was proven in \cite[Theorem 1.1]{Ben89}.

\begin{theorem}\label{thm:Ex_Uniq}
Let $dS$ be a complex $n\times n$ matrix-valued measure. For every left continuous (either
$n\times n$ or $n\times1$ matrix valued) function $A(x)$ in $BV_{\loc}[0,b)$ there is a unique function $U$ such that the equality
	\begin{equation}\label{eq:sys_SolvTH}
		U(x) = A(x) + \int_0^x dS \cdot U
	\end{equation}
	holds for every point $x\in[0,b)$.
\end{theorem}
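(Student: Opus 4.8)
The plan is to prove both existence and uniqueness by the classical method of successive approximations, the only delicate point being that the matrix measure $dS$ may carry atoms. It suffices to work on an arbitrary subinterval $[0,b')$ with $b'<b$. First I would fix a submultiplicative matrix norm $\|\cdot\|$ and let $\mu$ be the total variation measure of $dS$ with respect to it, so that $\bigl\|\int f\,dS\bigr\|\le\int\|f\|\,d\mu$ for every bounded measurable $f$; since $A\in BV_{\loc}[0,b)$ and $dS$ has finite variation on $[0,b')$, the constants $C\coloneqq\sup_{[0,b')}\|A\|$ and $V\coloneqq\mu([0,b'))$ are finite. Then I would define the iterates $U_0\coloneqq A$ and $U_{k+1}(x)\coloneqq A(x)+\int_0^x dS\cdot U_k$, and check by induction that each $U_k$ is bounded, left-continuous and of bounded variation on $[0,b')$: the indefinite integral $\int_0^{\cdot}dS\cdot U_k$ of a bounded measurable function against a measure of finite variation has exactly these properties, left-continuity being built into the convention $\int_0^x=\int_{[0,x)}$.

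The heart of the matter is the estimate
\[
	\|U_{k+1}(x)-U_k(x)\|\le C\,\frac{\mu([0,x))^{k+1}}{(k+1)!},\qquad x\in[0,b').
\]
To obtain it I would unfold the recursion: $U_{k+1}(x)-U_k(x)$ equals the iterated integral $\int_0^x dS(t_{k+1})\int_0^{t_{k+1}}dS(t_k)\cdots\int_0^{t_2}dS(t_1)\,A(t_1)$, and because each inner integration runs over a half-open interval this is carried over the \emph{strict} simplex $\Delta_x^{k+1}=\{0\le t_1<t_2<\cdots<t_{k+1}<x\}$; hence its norm is at most $C\,\mu^{\otimes(k+1)}(\Delta_x^{k+1})$. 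Since the $(k+1)!$ permuted copies of $\Delta_x^{k+1}$, together with the set where two coordinates coincide, partition $[0,x)^{k+1}$, and $\mu^{\otimes(k+1)}$ is symmetric, one gets $\mu^{\otimes(k+1)}(\Delta_x^{k+1})\le\mu([0,x))^{k+1}/(k+1)!$. Consequently $\sum_k(U_{k+1}-U_k)$ converges uniformly on $[0,b')$, so $U_k\to U$ uniformly there; $U$ is left-continuous, and letting $k\to\infty$ in $U_{k+1}(x)=A(x)+\int_0^x dS\cdot U_k$ --- the integral converges since $U_k\to U$ uniformly on $[0,x)$ and $\mu([0,x))<\infty$ --- shows that $U$ solves \eqref{eq:sys_SolvTH}. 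As $b'$ is arbitrary this defines $U$ on all of $[0,b)$, and being of the form $A+\int_0^{\cdot}dS\cdot U$ with $U$ locally bounded, it automatically lies in $BV_{\loc}[0,b)$.

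For uniqueness I would first remark that any solution of \eqref{eq:sys_SolvTH} is automatically in $BV_{\loc}[0,b)$, hence locally bounded, because $\int_0^{\cdot}dS\cdot U$ is of locally bounded variation once $U$ is locally $dS$-integrable. Then, given two solutions $U$ and $\widetilde U$, the difference $W\coloneqq U-\widetilde U$ satisfies $W(x)=\int_0^x dS\cdot W$, and with $K\coloneqq\sup_{[0,b')}\|W\|<\infty$ the same strict-simplex estimate yields $\|W(x)\|\le K\,\mu([0,x))^{n}/n!$ for every $n$, so $W\equiv0$ on $[0,b')$ and therefore on $[0,b)$.

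The step I expect to be the main obstacle is precisely the atomic part of $dS$: a naive estimate replacing $\mu([0,x))^{k+1}/(k+1)!$ by $\mu([0,x))^{k+1}$ would not force convergence of the successive approximations when $dS$ has atoms of mass $\ge 1$, so it is essential that the left-continuous (half-open) integration convention produces the strictly ordered simplices and the attendant factorial gain; once this is in place, the remaining verifications are routine.
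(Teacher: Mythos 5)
Your argument is correct. Note first that the paper itself contains no proof of this statement: it is quoted verbatim from Bennewitz (\cite[Theorem 1.1]{Ben89}), so there is no in-paper argument to compare with. Your successive-approximation proof is a legitimate self-contained route, and you have correctly isolated the one genuinely delicate point: with the convention $\int_0^x=\int_{[0,x)}$ the unfolded iterates live on the \emph{strictly} ordered simplex, whose $\mu^{\otimes(k+1)}$-measure is at most $\mu([0,x))^{k+1}/(k+1)!$ by the disjointness of its $(k+1)!$ permuted copies --- an estimate that survives atoms of arbitrary mass, whereas the closed simplex (which is what the convention $\int_{[0,x]}$ would produce) does not enjoy the factorial gain and would indeed break both the convergence of the iteration and, in general, uniqueness at an atom where $I-S(\{x_0\})$ is singular. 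The uniqueness step via $\|W(x)\|\le K\,\mu([0,x))^{n}/n!$ and the remark that any solution is automatically left-continuous and locally of bounded variation (consistent with Remark~\ref{rem:2.6}) are also sound.

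Two small points you could tighten, neither of which is a gap: (a) the inequality $\bigl\|\int f\,dS\bigr\|\le\int\|f\|\,d\mu$ for a matrix-valued measure deserves a one-line justification (check it on simple functions using submultiplicativity of the norm and the definition of the total variation $\mu$, then pass to the limit); (b) when you let $b'$ vary, it is worth saying explicitly that the iterates $U_k(x)$ do not depend on $b'$ at all (only the estimates do), so the functions constructed on the exhausting intervals patch together without invoking uniqueness. With these remarks the proof is complete and, as far as the method goes, it is the standard Picard-type argument one would expect behind Bennewitz's cited theorem.
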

\begin{remark}\label{rem:2.6}
	Due to the properties of the Lebesgue-Stieltjes integral and the used convention, any solution $U$ to~\eqref{eq:sys_SolvTH} is left continuous and belongs to $BV_{\loc}[0,b)$, componentwise.
\end{remark}

Now we focus on integral systems $S[R_1,R_2]$ of the form~\eqref{eq:IntSys},
where $R_1(x)$ and $R_2(x)$ are nondecreasing and left-continuous real-valued functions on the interval $I=[0,b)$ such that $R_1(0)=R_2(0)=0$. We define the corresponding inhomogeneous system.

\begin{definition} \label{def:A_triple}
	Let $f \in \cL^2(R_2)$ and $[u_1\ u_2]^T$ be a vector-valued function such that the following equation
	\begin{equation} \label{eq:sys_lin_rel}
		\begin{bmatrix} u_1\\u_2 \end{bmatrix} (x)
		=
		\begin{bmatrix} u_1\\u_2 \end{bmatrix} (0)
		- J \int_0^x
		\begin{bmatrix}
			dR_2 & 0 \\
			0    & dR_1
		\end{bmatrix}
		\begin{bmatrix} f\\u_2 \end{bmatrix}
	\end{equation}
	holds for every point $x\in[0,b)$.
	The triple $(u_1,u_2,f)$ is said to belong to the set $\cT$
	if $u_1 \in \cL^2(R_2)$.
\end{definition}
Due to Remark~\ref{rem:2.6} for every $(u_1,u_2,f)\in\cT$ both functions $u_1$ and $u_2$ belong to $BV_{\loc}[0,b)$. Theorem~\ref{thm:Ex_Uniq} implies that for every $f \in \cL^2(R_2)$ the vector-valued function $[u_1\ u_2]^T$ satisfying~\eqref{eq:sys_lin_rel} is uniquely determined by its initial values at zero, however $u_1 \in \cL^2(R_2)$ is not guaranteed for an arbitrary $f$.

\begin{definition}\label{def:Tmax}
	We define the maximal and the pre-minimal relations $T_{\max}$, $T'\subset L^2(R_2) \times L^2(R_2)$ by
	\begin{equation} \label{eq:Pi_map}
		T_{\max} \coloneqq \left\{
		\bm{u} = \begin{bmatrix} \pi u_1 \\ \pi f \end{bmatrix} \colon
		(u_1, u_2, f) \in \cT \right\}.
	\end{equation}
	\begin{equation} \label{eq:Pi_min}
		T' \coloneqq \left\{
		\bm{u} = \begin{bmatrix} \pi u_1 \\ \pi f \end{bmatrix}
		\in T_{\max} \colon
		(u_1, u_2, f) \in \cT, \ u_1, f\in L^2_{\comp}(R_2,I)
		\right\}.
	\end{equation}
\end{definition}

Everywhere in the paper, except Remark~\ref{rem:MP}, we suppose that the following two natural assumptions hold.
\begin{assumption} \label{assum:R1R2}
	The functions $R_1$ and $R_2$ have no common points of discontinuity.
\end{assumption}

\begin{assumption} \label{assum:surj}
	There exists an interval $[0,b_0) \subseteq [0,b)$ such that
	\begin{equation} \label{eq:assum_cond}
		\dim\spn\{\pi 1,\pi R_1\} = 2
	\end{equation}
	where $\pi \colon \cL^2(R_2,[0,b_0)) \to L^2(R_2,[0,b_0))$ is the corresponding quotient map.
\end{assumption}

Assumption~\ref{assum:R1R2} has the important consequence that the first component of a solution has no discontinuity in common with the second component of any solution $(u_1, u_2, f) \in \cT$.
Assumption~\ref{assum:surj} makes it possible to assign correctly the values $u_1(x)$ and $u_2(x)$ for every $\bm{u} \in T_{\max}$. In case of absolutely continuous functions $R_1$ and $R_2$ the equivalent to $S[R_1,R_2]$ differential system is \emph{definite} in the sense of \cite[Definition 2.14]{LM03} if and only if Assumption~\ref{assum:surj} holds.
\begin{definition}
	Let $(u_1,u_2,f) \in \cT$ and $
\bm{u} \in T_{\max}$ be its image under the mapping
\begin{equation}\label{eq:Pi_map1}
	\cT \ni (u_1,u_2,f) \mapsto
	\bm{u} = \begin{bmatrix} \pi u_1 \\ \pi f \end{bmatrix}
	\in T_{\max}.
\end{equation}
 The mappings $\phi_{1,2}[x] \colon T_{\max} \to \nC$ are defined by
	\begin{equation}
		\phi_i [x] \bm{u} \coloneqq  u_i (x), \quad i \in \{1,2\},\quad x\in[0,b).
	\end{equation}
\end{definition}
The following Proposition provides an analog of \cite[Proposition 2.15]{LM03} for integral system $S[R_1,R_2]$.
\begin{proposition}
If Assumptions~\ref{assum:R1R2} and~\ref{assum:surj} hold then
	the mappings $\phi_{1,2}[x]$ are well-defined.
\end{proposition}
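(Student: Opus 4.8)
The plan is to verify that the value $u_i(x)$ attached to an element $\bm u \in T_{\max}$ does not depend on which triple $(u_1,u_2,f)\in\cT$ represents $\bm u$ via \eqref{eq:Pi_map1} — this is exactly the assertion that $\phi_i[x]$ is well-defined. So I would begin with two triples $(u_1,u_2,f)$ and $(\tilde u_1,\tilde u_2,\tilde f)$ in $\cT$ having the same image, i.e.\ $\pi u_1=\pi\tilde u_1$ and $\pi f=\pi\tilde f$, and pass to the differences $w_1\coloneqq u_1-\tilde u_1$, $w_2\coloneqq u_2-\tilde u_2$, $g\coloneqq f-\tilde f$. By linearity of the Lebesgue--Stieltjes integral the triple $(w_1,w_2,g)$ again satisfies \eqref{eq:sys_lin_rel}, and by construction $w_1=0$ and $g=0$ hold $dR_2$-almost everywhere on $[0,b)$; moreover $w_1,w_2\in BV_{\loc}[0,b)$ by Remark~\ref{rem:2.6}, so the pointwise values appearing below are meaningful.

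Next I would read off the two scalar components of \eqref{eq:sys_lin_rel} for $(w_1,w_2,g)$ (using $J\,[a\ b]^T=[-b\ a]^T$):
\[
	w_1(x)=w_1(0)+\int_0^x w_2\,dR_1,\qquad
	w_2(x)=w_2(0)-\int_0^x g\,dR_2.
\]
Since $g=0$ $dR_2$-a.e., the integral $\int_0^x g\,dR_2$ vanishes identically, so $w_2\equiv w_2(0)=:c$ is constant; substituting this into the first equation and using $R_1(0)=0$ gives $w_1(x)=w_1(0)+c\,R_1(x)$ for all $x\in[0,b)$. But $w_1=0$ $dR_2$-a.e., hence $w_1(0)\,\pi 1+c\,\pi R_1=0$ in $L^2(R_2)$, and a fortiori after restriction to the interval $[0,b_0)$ of Assumption~\ref{assum:surj}. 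By the linear independence \eqref{eq:assum_cond} of $\pi 1$ and $\pi R_1$ in $L^2(R_2,[0,b_0))$ this forces $w_1(0)=c=0$. Therefore $w_1\equiv w_2\equiv 0$, i.e.\ $u_1(x)=\tilde u_1(x)$ and $u_2(x)=\tilde u_2(x)$ for every $x\in[0,b)$, which is what had to be shown.

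I do not anticipate a real obstacle here: the whole content is the structural observation that two representatives of the same $\bm u$ can differ only by the affine function $x\mapsto w_1(0)+c\,R_1(x)$ in the first component and by the constant $c$ in the second, together with the fact that Assumption~\ref{assum:surj} is precisely tailored to annihilate both once that affine function is known to be $dR_2$-null. The only points demanding a little care are purely bookkeeping: checking that the difference triple still solves \eqref{eq:sys_lin_rel} with $w_2$ occupying the slot previously played by $u_2$, and that restricting an $L^2(R_2,[0,b))$-identity to $[0,b_0)$ is legitimate. Note that Assumption~\ref{assum:R1R2} is not used for this particular statement and enters only as a standing hypothesis; if preferred, the uniqueness of $(w_1,w_2)$ for the data $g$ could instead be quoted from Theorem~\ref{thm:Ex_Uniq}, but the elementary computation above is shorter.
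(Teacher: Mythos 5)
Your argument is correct and is essentially the paper's own proof: you form the difference triple, observe that it reduces to $w_2\equiv c$ and $w_1(x)=w_1(0)+c\,R_1(x)$, and then invoke Assumption~\ref{assum:surj} (after restricting to $[0,b_0)$) to conclude $w_1(0)=c=0$, exactly as in \eqref{eq:y1y2_recovery} and the subsequent application of $\pi$ in the paper. Your side remark that Assumption~\ref{assum:R1R2} is not actually needed for this particular statement is also consistent with the paper's proof.
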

\begin{proof}
	In general, the mapping defined by~\eqref{eq:Pi_map1} is not invertible.
	Suppose that $(u_1,u_2,f)$ and $(\wt{u}_1,\wt{u}_2,\wt{f})$ are two pre-images of
	$\bm{u} = \begin{bmatrix} \pi u_1 \\ \pi f \end{bmatrix} \in T_{\max}$
	as it is shown on the following diagram.
	\begin{center}
	\begin{tikzcd}
		(u_1,u_2,f) \arrow{d} \arrow{r} &
		\bm{u} \arrow[dashed]{d} &
		\arrow{l} (\wt{u}_1,\wt{u}_2,\wt{f}) \arrow{d} \\
		u_{1,2}(x) \arrow{r} &
		\phi_{1,2}[x] \bm{u} &
		\arrow{l} \wt{u}_{1,2}(x)
	\end{tikzcd}
	\end{center}
	Let us show that due to Assumption~\ref{assum:surj}
	\begin{equation}
		u_1(x) = \wt{u}_1(x), \quad u_2(x) = \wt{u}_2(x), \quad \text{for}\ t \in [0,b).
	\end{equation}
	Clearly, $(u_1-\wt{u}_1, u_2-\wt{u}_2, Y-\wt{Y}) \in \cT$. Taking into account $\pi u_1 = \pi \wt{u}_1$, $\pi f = \pi \wt{f}$, it follows from~\eqref{eq:sys_lin_rel} that
	\begin{equation} \label{eq:y1y2_recovery}
		\begin{bmatrix}
			u_1(x) - \wt{u}_1(x)\\ u_2(x) - \wt{u}_2(x)
		\end{bmatrix}
		=
		\begin{bmatrix}
			(u_1(0) - \wt{u}_1(0)) + (u_2(0) - \wt{u}_2(0))R_1(x)\\
			u_2(0) - \wt{u}_2(0)
		\end{bmatrix}.
	\end{equation}
	The mapping $\pi$ to applied the first line of~\eqref{eq:y1y2_recovery} gives
	\begin{equation}
		0 = (u_1(0) - \wt{u}_1(0)) \cdot \pi 1 + (u_2(0) - \wt{u}_2(0)) \cdot \pi R_1.
	\end{equation}
	Now it follows from~\eqref{eq:assum_cond} that $u_1(0) = \wt{u}_1(0)$, $u_2(0) = \wt{u}_2(0)$, which together with~\eqref{eq:y1y2_recovery} completes the proof.
\end{proof}

Further in the text we will simply write $u_{1,2}(x)$ instead of $\phi_{1,2}[x]\bm{u}$ unless this can lead to confusion.
For a pair of vector-valued functions
$u = \begin{bmatrix} u_1 & u_2 \end{bmatrix}^T$,
$v = \begin{bmatrix} v_1 & v_2 \end{bmatrix}^T$
we define the generalized Wronskian by
\begin{equation}
	[u,v](x) \coloneqq u_1(x) v_2(x) - u_2(x) v_1(x).
\end{equation}

\begin{proposition} \label{prop:green_formulas}
If $(u_1,u_2,f)$ and $(v_1,v_2,g)$ belong to $\cT$
then the following generalized first and second Green's identities hold
	\begin{equation}\label{eq:G1}
		\int_0^x f v_1\, dR_2 = \int_0^x u_2 v_2\, dR_1 - u_2(x) v_1(x) + u_2(0) v_1(0),
	\end{equation}
	\begin{equation}\label{eq:G2}
		\int_0^x (f v_1 - u_1 g) \, dR_2 = [u,v](x) - [u,v](0).
	\end{equation}
	for an arbitrary interval $[0,x) \subset [0,b)$.
\end{proposition}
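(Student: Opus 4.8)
The plan is to read off the Lebesgue--Stieltjes differentials of the components from the integral system~\eqref{eq:sys_lin_rel} and then apply the integration-by-parts formula~\eqref{eq:in-by-parts}, using Assumption~\ref{assum:R1R2} to pass between a left-continuous function of bounded variation and its right-continuous regularization. First I would unpack~\eqref{eq:sys_lin_rel}: multiplying out the matrix product on its right-hand side produces the column $\begin{bmatrix} u_2\,dR_1 \\ -f\,dR_2\end{bmatrix}$, so that $u_1(x)=u_1(0)+\int_0^x u_2\,dR_1$ and $u_2(x)=u_2(0)-\int_0^x f\,dR_2$; equivalently, as Lebesgue--Stieltjes measures, $du_1=u_2\,dR_1$, $du_2=-f\,dR_2$, and likewise $dv_1=v_2\,dR_1$, $dv_2=-g\,dR_2$. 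By Remark~\ref{rem:2.6} the functions $u_2$ and $v_1$ are left-continuous and of bounded variation on every subinterval $[0,x']$ with $x'<b$; since $R_1(x),R_2(x)<\infty$ for $x<b$ and $f,g\in\cL^2(R_2)$, all integrals occurring in~\eqref{eq:G1}--\eqref{eq:G2} converge absolutely on $[0,x)$.

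Next I would apply the integration-by-parts formula~\eqref{eq:in-by-parts} to the pair $u_2,v_1$,
\[
	\int_0^x u_2\,dv_1 + \int_0^x v_{1,+}\,du_2 = u_2(x)v_1(x) - u_2(0)v_1(0),
\]
and substitute $dv_1=v_2\,dR_1$ and $du_2=-f\,dR_2$ to obtain
\[
	\int_0^x u_2 v_2\,dR_1 - \int_0^x v_{1,+}\,f\,dR_2 = u_2(x)v_1(x) - u_2(0)v_1(0).
\]
The one point that needs an argument is the replacement of $v_{1,+}$ by $v_1$ in the second integral. Since $dv_1=v_2\,dR_1$ is absolutely continuous with respect to $dR_1$, the function $v_1$ can be discontinuous only at discontinuity points of $R_1$; by Assumption~\ref{assum:R1R2} each such point is a continuity point of $R_2$, hence not an atom of $dR_2$, so the countable set on which $v_{1,+}\neq v_1$ is $dR_2$-null. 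Therefore $\int_0^x v_{1,+}\,f\,dR_2 = \int_0^x v_1 f\,dR_2$, and rearranging the last display gives~\eqref{eq:G1}.

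Finally, \eqref{eq:G2} follows from~\eqref{eq:G1} by symmetry: applying~\eqref{eq:G1} with the roles of the two triples interchanged yields
\[
	\int_0^x u_1 g\,dR_2 = \int_0^x u_2 v_2\,dR_1 - v_2(x)u_1(x) + v_2(0)u_1(0),
\]
and subtracting this from~\eqref{eq:G1} cancels the term $\int_0^x u_2 v_2\,dR_1$, leaving
\[
	\int_0^x (f v_1 - u_1 g)\,dR_2 = \bigl(u_1(x)v_2(x)-u_2(x)v_1(x)\bigr) - \bigl(u_1(0)v_2(0)-u_2(0)v_1(0)\bigr) = [u,v](x)-[u,v](0).
\]
I do not expect a genuine obstacle here; the only place where care is needed is the $v_{1,+}$-versus-$v_1$ step, which is exactly where Assumption~\ref{assum:R1R2} enters.
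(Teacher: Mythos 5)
Your proposal is correct and follows essentially the same route as the paper: read off $dv_1=v_2\,dR_1$, $du_2=-f\,dR_2$ from~\eqref{eq:sys_lin_rel}, integrate $d(u_2v_1)$ by parts via~\eqref{eq:in-by-parts} using Assumption~\ref{assum:R1R2} to identify $v_{1,+}$ with $v_1$ $dR_2$-a.e., and obtain~\eqref{eq:G2} by swapping the triples and subtracting. Your explicit justification of the $v_{1,+}$-versus-$v_1$ step is exactly the point the paper handles by noting that $u_2$ and $v_1$ share no discontinuities.
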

\begin{proof}
	We recall that due to Assumption~\ref{assum:R1R2} the functions $R_1$ and $R_2$ do not have common points of discontinuity, so neither do the functions $v_1$ and $u_2$.
	By virtue of~\eqref{eq:sys_lin_rel} we get
	\begin{equation}
			dv_1 = v_2\, dR_1, \quad du_2 = - f\, dR_2.
	\end{equation}
	and hence, using the integration-by-parts formula~\eqref{eq:in-by-parts},
	\begin{equation} \label{eq:2.7}
		d(u_2 v_1) =
		v_1\, du_2  + u_2\, dv_1 = u_2 v_2\, dR_1 - f v_1\, dR_2.
	\end{equation}
	Integrating \eqref{eq:2.7} over $[0,x)$ provides~\eqref{eq:G1}.
	Swapping the tuples $(u_1,u_2,f)$ and $(v_1,v_2,g)$ in~\eqref{eq:2.7} and subtracting the obtained expression from~\eqref{eq:2.7} proves~\eqref{eq:G2}.
\end{proof}

Theorem~\ref{thm:Ex_Uniq} provides that system $S[R_1,R_2]$ has a unique solution for every choice of initial values.
Let $c(\cdot,\lambda) = [c_1(\cdot,\lambda)\ c_2(\cdot,\lambda)]^T$ and $s(\cdot,\lambda) = [s_1(\cdot,\lambda)\ s_2(\cdot,\lambda)]^T$
be its unique solutions satisfying the initial conditions~\eqref{eq:csK}.
\begin{corollary}
	For every $\lambda\in\dC$ and $x\in [0,b)$ the following formulas hold:
	\begin{equation} \label{eq:Liouv}
		[c(\cdot,\lambda),s(\cdot,\lambda)](x) = c_1(x,\lambda) s_2(x,\lambda) - c_2(x,\lambda) s_1(x,\lambda) = 1,
	\end{equation}
	\begin{equation} \label{eq:Liouv+}
		c_{1+}(x,\lambda) s_2(x,\lambda) - c_2(x,\lambda) s_{1+}(x,\lambda) = 1,
	\end{equation}	
	\begin{equation} \label{eq:Liouv++}
		c_1(x,\lambda) s_{2+}(x,\lambda) - c_{2+}(x,\lambda) s_1(x,\lambda) = 1.
	\end{equation}
	\end{corollary}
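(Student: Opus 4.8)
The plan is to prove \eqref{eq:Liouv} by showing that the generalized Wronskian $x\mapsto[c(\cdot,\lambda),s(\cdot,\lambda)](x)$ has vanishing Lebesgue--Stieltjes differential, and then to read off \eqref{eq:Liouv+} and \eqref{eq:Liouv++} from the jumps of the solutions. To set up, I would first note that $c(\cdot,\lambda)$ and $s(\cdot,\lambda)$ satisfy \eqref{eq:sys_lin_rel} on every subinterval $[0,b')\subset[0,b)$ with $f=\lambda c_1(\cdot,\lambda)$ and $g=\lambda s_1(\cdot,\lambda)$ respectively, so by Remark~\ref{rem:2.6} the four functions $c_1,c_2,s_1,s_2$ lie in $BV_{\loc}[0,b)$ and satisfy $dc_1=c_2\,dR_1$, $dc_2=-\lambda c_1\,dR_2$, $ds_1=s_2\,dR_1$, $ds_2=-\lambda s_1\,dR_2$.

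Applying the integration-by-parts formula \eqref{eq:in-by-parts} to the products $c_1s_2$ and $c_2s_1$ and subtracting, exactly as in the derivation of \eqref{eq:G2} in the proof of Proposition~\ref{prop:green_formulas} (whose computation uses only \eqref{eq:sys_lin_rel} and \eqref{eq:in-by-parts}, not membership in $\cT$), would give
\[
	d[c,s] = \lambda c_1(s_{1+}-s_1)\,dR_2 + c_2(s_{2+}-s_2)\,dR_1 .
\]
Here $s_{1+}-s_1$ is the (right-hand) jump function of $s_1$, hence supported on the set of discontinuity points of $R_1$, and $s_{2+}-s_2$ is supported on the set of discontinuity points of $R_2$; by Assumption~\ref{assum:R1R2} these two sets are disjoint, so the first is $dR_2$-null and the second is $dR_1$-null, and both terms on the right vanish. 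Thus $[c,s]$ is constant on $[0,b)$ and equals $[c,s](0)=c_1(0)s_2(0)-c_2(0)s_1(0)=1$, which is \eqref{eq:Liouv}. (Equivalently, one may just substitute $u=c$, $v=s$, $f=\lambda c_1$, $g=\lambda s_1$ into \eqref{eq:G2}, whose left-hand side is then identically zero.)

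For \eqref{eq:Liouv+} and \eqref{eq:Liouv++} I would use that the differential relations above yield, at every $x\in[0,b)$,
\[
	c_{1+}-c_1=c_2\,\Delta R_1,\quad s_{1+}-s_1=s_2\,\Delta R_1,\quad c_{2+}-c_2=-\lambda c_1\,\Delta R_2,\quad s_{2+}-s_2=-\lambda s_1\,\Delta R_2,
\]
with $\Delta R_i(x):=R_i(x+)-R_i(x)$. Substituting the first two into $c_{1+}s_2-c_2s_{1+}$ makes the $\Delta R_1$-contributions cancel, so this expression equals $c_1s_2-c_2s_1$; substituting the last two into $c_1s_{2+}-c_{2+}s_1$ makes the $\Delta R_2$-contributions cancel, so this too equals $c_1s_2-c_2s_1$. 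Both are therefore equal to $1$ by \eqref{eq:Liouv}.

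I do not anticipate a serious difficulty. The one step that requires real care is the vanishing of the two jump integrals in $d[c,s]$, which is exactly where Assumption~\ref{assum:R1R2} enters: a common atom $x_0$ of $R_1$ and $R_2$ would contribute $\lambda\,\Delta R_1(x_0)\,\Delta R_2(x_0)\,[c,s](x_0)$ to $d[c,s]$ and destroy the conservation of the Wronskian. The remainder is routine Lebesgue--Stieltjes bookkeeping, the only pitfalls being the sign and one-sided-limit conventions tied to left-continuity and to the choice $\int_a^x=\int_{[a,x)}$.
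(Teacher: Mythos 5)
Your proposal is correct and follows essentially the same route as the paper: the first identity comes from the second Green's identity \eqref{eq:G2} (equivalently, from the vanishing of $d[c,s]$, where Assumption~\ref{assum:R1R2} kills the jump terms), and \eqref{eq:Liouv+}, \eqref{eq:Liouv++} follow from the jump relations $c_{1+}-c_1=c_2\,dR_1(\{x\})$, $s_{1+}-s_1=s_2\,dR_1(\{x\})$, etc., exactly as in the paper's computation at discontinuity points. The only cosmetic difference is that you verify the cancellation uniformly via $\Delta R_i$ rather than splitting into continuity and discontinuity points, which changes nothing of substance.
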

\begin{proof}
	Equality~\eqref{eq:Liouv} follows immediately from either~\eqref{eq:Kernel_U} or \eqref{eq:G2}. Further we subtract the left-hand side of \eqref{eq:Liouv} from the left-hand side of \eqref{eq:Liouv+}:
	\begin{multline} \label{eq:cor_Liouv}
		(c_{1+}(x,\lambda) s_2(x,\lambda) - c_2(x,\lambda) s_{1+}(x,\lambda)) -
		(c_1(x,\lambda) s_2(x,\lambda) - c_2(x,\lambda) s_1(x,\lambda)) =\\
		(c_{1+}(x,\lambda)-c_1(x,\lambda)) s_2(x,\lambda) -
		c_2(x,\lambda) (s_{1+}(x,\lambda)- s_1(x,\lambda))
	\end{multline}
	One can immediately see that the expression~\eqref{eq:cor_Liouv} is equal to zero at every point of continuity of $R_1$. Let $x_0$ be a point of discontinuity of $R_1$. From~\eqref{eq:sys_lin_rel} one can see that
	\begin{align}
		c_{1+}(x_0,\lambda)-c_1(x_0,\lambda) = c_2(x_0, \lambda)\, dR_1(\{x_0\}),\\
		s_{1+}(x_0,\lambda)-s_1(x_0,\lambda) = s_2(x_0, \lambda)\, dR_1(\{x_0\})
	\end{align}
	and hence
	\begin{multline}
		(c_{1+}(x_0,\lambda)-c_1(x_0,\lambda)) s_2(x_0,\lambda) -
		c_2(x_0,\lambda) (s_{1+}(x_0,\lambda)- s_1(x_0,\lambda)) = \\
		c_2(x_0,\lambda) s_2(x_0,\lambda)\, dR_1(\{x_0\}) -
		s_2(x_0,\lambda) c_2(x_0,\lambda)\, dR_1(\{x_0\}) = 0.
	\end{multline}
	The proof of~\eqref{eq:Liouv++} is similar.
\end{proof}

It follows from~\eqref{eq:G2} that the pre-minimal relation $T'$ is symmetric in $L^2(R_2)$.
\begin{definition} \label{def:Amin}
The minimal relation $T_{\min}$ is defined as the closure of the pre-minimal linear relation $T'$: $T_{\min} = \clos{T'}$.
\end{definition}
As was shown in~\cite{Str19} the linear relation $T_{\min}$ is also symmetric, $T_{\min}^* = T_{\max}$ and
	\begin{equation} \label{eq:def-Amin}
		T_{\min} \coloneqq
		\left\{ \bm{u} = \begin{bmatrix} \pi u_1 \\ \pi f \end{bmatrix} \in T_{\max} \colon
		u_1(0) = u_2(0) = [u,v]_b = 0\ \text{for all}\
		\bm{v} = \begin{bmatrix} \pi v_1 \\ \pi g \end{bmatrix} \in T_{\max} \right\}.
	\end{equation}

\begin{lemma} \label{lem:Weyl_disks}
Let $l<b$, $h\in\clos{\nC_+}\cup\{\infty\}$, and let $m(\lambda,l,h)$ be some coefficient such that the function
\begin{equation} \label{eTWeyl}
	\psi(t,\lambda) \coloneqq s(t,\lambda)-m(\lambda,l,h)\, c(t,\lambda)
\end{equation}
satisfies the condition $\psi_1(l,\lambda) + h \psi_2(l,\lambda) = 0$.
Then:
\begin{enumerate}
	\item [(i)] The coefficient $m$ is well-defined and can be calculated as
\begin{equation} \label{eq:mlhl}
	m(\lambda,l,h) =
	\frac{s_1(l,\lambda) + h s_2(l,\lambda)}{c_1(l,\lambda) + h c_2(l,\lambda)}.
\end{equation}
	\item [(ii)] For every $\lambda \in \nC_+$ the set
	$D_l(\lambda) \coloneqq \{m(\lambda,l,h) \colon h\in\clos{\dC_+}\cup\{\infty\}\}$ is a disk in $\nC_+$ such that $\omega \in D_l(\lambda)$ if and only if
\begin{equation} \label{eq:WeylD}
	\int_0^l |s_1(t,\lambda) - \omega c_1(t,\lambda)|^2 dR_2(t) \le
	\frac{\Im\omega}{\Im\lambda},
\end{equation}
and its radius can be calculated as
\begin{equation}\label{Abh.13.10}
	r_l(\lambda) = \left(2 \Im \lambda\int_0^l |s_1(t,\lambda)|^2 dR_2(t)\right)^{-1}.
\end{equation}

    \item [(iii)] The Weyl discs $D_l(\lambda)$ are nested, i.e. $D_{l_2} \subseteq D_{l_1}$ provided $l_1 < l_2 < b$, and the function $s_1(\cdot,\lambda) - \omega c_1(\cdot,\lambda)$ belongs to $\cL^2(R_2)$ provided $\omega\in\cap_{l<b} D_l(\lambda)$.
\end{enumerate}
\end{lemma}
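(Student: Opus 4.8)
The plan is to prove parts (i)--(iii) by the classical Weyl-disk machinery adapted to the integral system. First, for (i), I would observe that the map $h \mapsto \psi(t,\lambda) = s(t,\lambda) - m\, c(t,\lambda)$ must satisfy the boundary condition $\psi_1(l,\lambda) + h\psi_2(l,\lambda) = 0$; substituting and solving the resulting linear equation for $m$ gives~\eqref{eq:mlhl}, provided the denominator $c_1(l,\lambda) + h c_2(l,\lambda)$ does not vanish. Nonvanishing follows from the Liouville-type identity~\eqref{eq:Liouv}: if both $c_1(l,\lambda) + h c_2(l,\lambda) = 0$ and (for the solution $\psi$ to be well-defined as a combination) the Wronskian considerations force a contradiction with $[c,s](l,\lambda) = 1$. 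The case $h = \infty$ is handled separately, giving $m = s_2(l,\lambda)/c_2(l,\lambda)$, consistent with~\eqref{eq:mlhl} in the limit.

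For (ii), the key identity is the first Green's formula~\eqref{eq:G1} applied to the solution $\psi(\cdot,\lambda)$ paired with its conjugate $\psi(\cdot,\conj\lambda)$, or more precisely the sesquilinear version computing $\int_0^l |\psi_1(t,\lambda)|^2 dR_2(t)$ in terms of boundary Wronskian-type terms. Using~\eqref{eq:G2} with $(u_1,u_2,f) = \psi(\cdot,\lambda)$ and $(v_1,v_2,g) = \psi(\cdot,\conj\lambda) = \overline{\psi(\cdot,\lambda)}$, one gets
\begin{equation*}
	(\lambda - \conj\lambda)\int_0^l |\psi_1(t,\lambda)|^2 dR_2(t) = [\psi(\cdot,\lambda), \overline{\psi(\cdot,\lambda)}](l) - [\psi(\cdot,\lambda), \overline{\psi(\cdot,\lambda)}](0).
\end{equation*}
Evaluating the boundary term at $0$ using the initial conditions~\eqref{eq:csK} yields $-2i\,\Im m(\lambda,l,h)$, while the term at $l$ is controlled by the boundary condition $\psi_1(l) + h\psi_2(l) = 0$ and is $\le 0$ when $\Im h \ge 0$ (vanishing when $h \in \dR \cup \{\infty\}$). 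Rearranging produces the disk inequality~\eqref{eq:WeylD} with $\omega = m(\lambda,l,h)$; conversely any $\omega$ satisfying~\eqref{eq:WeylD} is realized by an admissible $h$ on the boundary circle by reversing the computation. To extract the radius formula~\eqref{Abh.13.10}, I would rewrite~\eqref{eq:WeylD} as a genuine disk in the $\omega$-plane by expanding $|s_1 - \omega c_1|^2$ and completing the square; the center and radius come out in terms of the three integrals $\int_0^l |c_1|^2 dR_2$, $\int_0^l c_1 \conj{s_1}\, dR_2$, $\int_0^l |s_1|^2 dR_2$, and the radius simplifies to~\eqref{Abh.13.10} after using the Lagrange/Liouville identity~\eqref{eq:Liouv} to evaluate the relevant $2\times 2$ determinant (this is where~\eqref{eq:Liouv} re-enters).

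For (iii), nestedness $D_{l_2} \subseteq D_{l_1}$ for $l_1 < l_2$ is immediate from the characterization~\eqref{eq:WeylD}, since the left-hand integral $\int_0^l |s_1 - \omega c_1|^2 dR_2$ is non-decreasing in $l$ (the integrand is nonnegative and $dR_2 \ge 0$), so the constraint at $l_2$ is stronger. Consequently the radii $r_l(\lambda)$ are non-increasing and the nested closed disks have nonempty intersection. Finally, if $\omega \in \bigcap_{l<b} D_l(\lambda)$, then~\eqref{eq:WeylD} holds for all $l<b$ with the same right-hand side $\Im\omega/\Im\lambda$, so letting $l \to b$ by monotone convergence gives $\int_0^b |s_1(t,\lambda) - \omega c_1(t,\lambda)|^2 dR_2(t) \le \Im\omega/\Im\lambda < \infty$, i.e.\ $s_1(\cdot,\lambda) - \omega c_1(\cdot,\lambda) \in \cL^2(R_2)$.

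The main obstacle I anticipate is the bookkeeping in part (ii): carefully handling the boundary terms at $x = l$ when $h = \infty$ and when $R_1$ or $R_2$ has a jump near $l$, and correctly identifying the boundary contribution at $0$ as $\pm 2i\,\Im m$ using~\eqref{eq:csK}. The algebraic reduction of the disk radius to the clean form~\eqref{Abh.13.10} also requires the Liouville identity~\eqref{eq:Liouv} in an essential way; getting the constant (the factor $2$ and the sign of $\Im\lambda$) right is the delicate point. Everything else is a direct transcription of the string/Sturm--Liouville Weyl-disk argument, with Green's identities~\eqref{eq:G1}--\eqref{eq:G2} playing the role of integration by parts.
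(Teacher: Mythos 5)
Your proposal follows essentially the same route as the paper's proof: part (i) is the same substitution yielding \eqref{eq:mlhl}; for part (ii) the paper likewise applies the second Green's identity \eqref{eq:G2} to the pair $\psi(\cdot,\lambda)$, $\psi(\cdot,\conj{\lambda})$ and obtains $(\lambda-\conj{\lambda})\int_0^l|\psi_1|^2\,dR_2=(\omega-\conj{\omega})-(h-\conj{h})|\psi_2(l)|^2$, which is exactly your boundary-term computation (with $[\psi,\conj{\psi}](0)=-2i\Im m$ and $[\psi,\conj{\psi}](l)=-2i\Im h\,|\psi_2(l)|^2$); and part (iii) is the same monotonicity-plus-limit argument. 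You in fact supply details the paper omits: non-vanishing of the denominator in \eqref{eq:mlhl}, the converse inclusion in the disk characterization, and a derivation of the radius, none of which appear in the paper's proof.

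One concrete caveat about your radius step. If you carry out the completing-the-square you propose for $\{\omega:\int_0^l|s_1-\omega c_1|^2dR_2\le \Im\omega/\Im\lambda\}$, the radius comes out as $\bigl(2\Im\lambda\int_0^l|c_1(t,\lambda)|^2dR_2(t)\bigr)^{-1}$, not the printed \eqref{Abh.13.10}: using \eqref{eq:Liouv} one gets $|c_1\conj{s_2}-c_2\conj{s_1}|^2-4\Im(c_2\conj{c_1})\Im(s_2\conj{s_1})=1$ at $x=l$, and the square-completion (or, faster, the M\"obius radius formula $r_l=|c_1 s_2-c_2 s_1|\big/\bigl(2|\Im(c_1(l)\overline{c_2(l)})|\bigr)$ combined with \eqref{eq:V11} at $\mu=\lambda$) then produces the $c_1$-integral; the radius is governed by the solution that multiplies the coefficient $m$ in \eqref{eTWeyl}, which is $c$, not $s$. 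A quick check with $R_1(x)=R_2(x)=x$ (so $c_1=\cos(\sqrt{\lambda}x)$, $s_1=\lambda^{-1/2}\sin(\sqrt{\lambda}x)$) confirms this. So your plan is the right computation, but it will not literally ``simplify to'' \eqref{Abh.13.10} as printed; that formula appears to contain a misprint ($s_1$ in place of $c_1$). Since the paper's own proof never derives the radius at all, this discrepancy does not affect the comparison of the remaining steps, which are sound and match the paper.
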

\begin{proof}
	(i) From~\eqref{eTWeyl} and the condition $\psi_1(l,\lambda) + h \psi_2(l,\lambda) = 0$ we get
	\begin{equation}
		\psi_1(l,\lambda)  + h \psi_2(l,\lambda) =
		(s_1(l,\lambda) + h s_2(l,\lambda)) - m(\lambda,l,h) (c_1(l,\lambda) + h c_2(l,\lambda)) = 0
	\end{equation}
	which results as~\eqref{eq:mlhl}.

	(ii) It is clear from formula~\eqref{eq:mlhl} that the function $m(\lambda,l,\cdot)$ maps $\dR_+\cup\{\infty\}$ into a circle.
	Let $h\in\clos{\dC_+}\cup\{\infty\}$ and $\omega \in D_l(\lambda)$.
	Applying the second Green's identity~\eqref{eq:G2} to the tuples $\{\psi_1,\psi_2,\lambda\psi_1\}$ and $\{\psi_1^*,\psi_2^*,\conj{\lambda}\psi_1^*\}$ provides
	\begin{equation}
		(\lambda - \conj{\lambda}) \int_0^l |\psi_1(t,\lambda)|^2 dR_2(t) =
		(\omega - \conj{\omega}) - (h - \conj{h}) |\psi_2(l)|^2
	\end{equation}
	and hence
	\begin{equation} \label{eq:wde}
		\int_0^l |s_1(t,\lambda) - \omega c_1(t,\lambda)|^2\, dR_2(t)
		=
		\frac{\Im\omega}{\Im\lambda} - \frac{\Im h}{\Im\lambda} |\psi_2(l)|^2.
	\end{equation}
	Since $\Im h > 0$, \eqref{eq:WeylD} follows now from \eqref{eq:wde}.

	(iii) Let $l_1 < l_2 < d$ and let $\omega \in D_{l_2}$. Then
	\begin{equation}
		\int_0^{l_1} |s_1(t,\lambda) - \omega c_1(t,\lambda)|^2\, dR_2(t) \le
		\int_0^{l_2} |s_1(t,\lambda) - \omega c_1(t,\lambda)|^2\, dR_2(t) \le
		\frac{\Im\omega}{\Im\lambda}
	\end{equation}
	and therefore $D_{l_2} \subseteq D_{l_1}$. Assume now $\omega\in\cap_{l<b}D_l(\lambda)$. Passing to the limit as $l\to b$ in~\eqref{eq:WeylD}, one gets
	\begin{equation}
		\int_0^b |s_1(t,\lambda) - \omega c_1(t,\lambda)|^2\, dR_2(t) \le
		\frac{\Im\omega}{\Im\lambda},
	\end{equation}
	which proves that $s_1(t,\lambda) - \omega c_1(t,\lambda) \in \cL^2(R_2)$.
\end{proof}

Assume that the point $b$ is singular for the system~\eqref{eq:IntSys}. Then the following alternative holds:
\begin{enumerate}
	\item [(i)]
	either the discs $D_l(\lambda)$ shrink to a limit point as $l\to b$ and then $\dim\sN_{\lambda} (T_{\max}) = 1$;
	\item [(ii)]
	or the discs $D_l(\lambda)$ converge to a limit disc as $l\to b$ and then  $\dim\sN_{\lambda} (T_{\max}) = 2$.
\end{enumerate}

\begin{definition} \label{def:lc_lp_cases}
	The system $S[R_1,R_2]$ is called \emph{limit point} at $b$ if $\dim\sN_{\lambda} (T_{\max}) = 1$, or \emph{limit circle} at $b$ if $\dim\sN_{\lambda} (T_{\max}) = 2$.
\end{definition}
\begin{remark}
A matrix version of integral equation equivalent to the integral system $S[R_1,R_2]$ with $R_1(x)\equiv x$ and $R_2(x)$ continuous was considered in~\cite{ArDy12}. Such equation can be reduced to a canonical differential system, see~\cite[Section 2.2]{ArDy12}. Condition of definiteness of general matrix canonical differential system was found in \cite{LM03}. In the scalar case this condition coincides with Assumption~\ref{assum:surj}.
\end{remark}

\section{Integral systems in the limit circle case}
\subsection{The fundamental matrix of the system $S[R_1,R_2]$}
We will start with some general properties of the fundamental matrix of the system $S[R_1,R_2]$.
\begin{lemma}\label{lem:2.4}
Let $U(x,\lambda)$ be the fundamental matrix function of the system $S[R_1,R_2]$
\begin{equation}\label{eq:Fund2}
		U(x,\lambda) = \begin{bmatrix} c_1(x,\lambda) & s_1(x,\lambda) \\ c_2(x,\lambda) & s_2(x,\lambda) \end{bmatrix}.
	\end{equation}
Then:
\begin{enumerate}
	\item [(i)] The following identity holds
	\begin{equation}\label{eq:Kernel_U}
		J-U(x,\mu)^*JU(x,\lambda) = -(\lambda-\conj{\mu})\int_0^x
		\begin{bmatrix} c_1(t,\conj{\mu})  \\ s_1(t,\conj{\mu})\end{bmatrix}
		\begin{bmatrix} c_1(t,\lambda) & s_1(t,\lambda) \end{bmatrix} dR_2(t).
	\end{equation}
	\item [(ii)] For every $x\in [0,b)$, $U(x,\lambda)$ is entire in $\lambda$.
	\item [(iii)] The entries of $U(x,\lambda)$ are nonnegative for $x\in [0,b)$, $\lambda\in\dR_-$.
If, in addition, the interval $(0,x)$ contains growth points of $R_1$ and $R_2$, and

\begin{equation}\label{eq:supp}
	a=\inf \supp dR_2, \quad
	a_1=\inf (\supp dR_1\cap(a,b)),
\end{equation}
then
\begin{equation}\label{eq:c_12}
	\lim_{\lambda\to-\infty} c_1(x,\lambda) = +\infty, \quad x\in(a_1,b);\quad
	\lim_{\lambda\to-\infty} c_2(x,\lambda) = +\infty, \quad x\in(a,b);
\end{equation}
\begin{equation}\label{eq:s_12}
	\lim_{\lambda\to-\infty} s_1(x,\lambda) = +\infty, \quad x\in(a_1,b); \quad
	\lim_{\lambda\to-\infty} s_2(x,\lambda) = +\infty, \quad x\in(a,b).
\end{equation}

	\item [(iv)] If $\lambda\in\dR_-$ then
\begin{equation}\label{eq:sc1}
	\frac{s_1(x,\lambda)}{c_1(x,\lambda)}< \frac{s_2(x,\lambda)}{c_2(x,\lambda)},\quad x\in (a,b),
\end{equation}
the function $\frac{s_1(x,\lambda)}{c_1(x,\lambda)}$ is increasing on $[0,b)$
and the function $\frac{s_2(x,\lambda)}{c_2(x,\lambda)}$ is decreasing on $(a,b)$.
\end{enumerate}
\end{lemma}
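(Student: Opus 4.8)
The plan is to reduce all four assertions to the scalar integral equations satisfied by the entries of the fundamental matrix. Writing \eqref{eq:IntSys} componentwise under the initial conditions \eqref{eq:csK} gives
\begin{equation*}
  c_1(x,\lambda)=1+\int_0^x c_2(t,\lambda)\,dR_1(t),\qquad
  c_2(x,\lambda)=-\lambda\int_0^x c_1(t,\lambda)\,dR_2(t),
\end{equation*}
together with the analogous pair for $(s_1,s_2)$ in which the constant $1$ is attached to $s_2$ rather than to $c_1$. For (i) I would apply Green's second identity \eqref{eq:G2} to the pairs of triples built from the columns of $U(\cdot,\bar\mu)$ and of $U(\cdot,\lambda)$ --- e.g. $(c_1(\cdot,\lambda),c_2(\cdot,\lambda))^T$ together with $f=\lambda c_1(\cdot,\lambda)$ is one such triple, and likewise for the remaining three columns. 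Since the generalized Wronskian of two such columns equals $0$ or $\pm1$ at $x=0$, \eqref{eq:G2} evaluates each of the four scalar entries of $J-U(x,\mu)^*JU(x,\lambda)$, and collecting them yields \eqref{eq:Kernel_U}; one also uses $\overline{c_i(x,\mu)}=c_i(x,\bar\mu)$, $\overline{s_i(x,\mu)}=s_i(x,\bar\mu)$, which follows from (ii) since the successive approximations have real coefficients.

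For (ii) I would expand $U(x,\lambda)=\sum_{n\ge0}U_n(x,\lambda)$ with $U_0\equiv I$ and $U_{n+1}(x,\lambda)=-J\int_0^x\left[\begin{smallmatrix}\lambda\,dR_2&0\\0&dR_1\end{smallmatrix}\right]U_n(t,\lambda)$. Each $U_n(x,\cdot)$ is a polynomial in $\lambda$ of degree $\le n$, and the Stieltjes estimate $\int_0^x V^n\,dV\le V(x)^{n+1}/(n+1)$ with $V(x)=|\lambda|R_2(x)+R_1(x)$ gives $\|U_n(x,\lambda)\|\le V(x)^n/n!$, so the series converges locally uniformly in $\lambda$; by Theorem~\ref{thm:Ex_Uniq} its sum is $U(x,\lambda)$, which is therefore entire. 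The nonnegativity assertion in (iii) follows at once: for $\lambda\le0$ the recursion for the $U_n$ has only nonnegative integrands, so every $U_n$, hence $U$, has nonnegative entries.

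For the limits in (iii), put $\lambda=-s$ with $s\ge0$. The scalar equations give $c_1\ge1$, $s_2\ge1$, hence $c_2(x,-s)\ge sR_2(x)$ and $s_1(x,-s)\ge R_1(x)$, and then
\begin{equation*}
  c_1(x,-s)\ge 1+s\int_0^x R_2\,dR_1,\qquad
  s_2(x,-s)\ge 1+s\int_0^x R_1\,dR_2,
\end{equation*}
with a further substitution producing $s_1(x,-s)\ge R_1(x)+s\int_0^x\bigl(\int_0^t R_1\,dR_2\bigr)dR_1(t)$. As $R_2(x)>0$ for $x>a$, the first bound forces $c_2(x,\lambda)\to+\infty$; for $x>a_1$ the measure $dR_1$ charges $(a,x)$, on which $R_2>0$, so $\int_0^x R_2\,dR_1>0$ and $c_1(x,\lambda)\to+\infty$; the $s$-estimates are handled the same way. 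The delicate point is to show $\int_0^x R_1\,dR_2>0$ for $x\in(a,b)$ and that the iterated integral is positive for $x\in(a_1,b)$: here the hypothesis that $(0,x)$ contains growth points of both $R_1$ and $R_2$, together with Assumptions~\ref{assum:R1R2} and~\ref{assum:surj}, is used to control the mutual position of the supports of $dR_1$ and $dR_2$. Monotonicity of the entries in $\lambda$ on $(-\infty,0)$, visible from the recursion, guarantees the limits exist in $(0,+\infty]$.

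For (iv), the scalar equations read $dc_1=c_2\,dR_1$, $ds_1=s_2\,dR_1$, $dc_2=-\lambda c_1\,dR_2$, $ds_2=-\lambda s_1\,dR_2$, so combining the quotient rule \eqref{eq:quotient-rule}, the integration-by-parts formula \eqref{eq:in-by-parts}, and the Liouville identity \eqref{eq:Liouv} gives
\begin{equation*}
  d\!\left(\frac{s_1}{c_1}\right)=\frac{(c_1 s_2-c_2 s_1)\,dR_1}{c_1 c_{1+}}=\frac{dR_1}{c_1 c_{1+}}\ge 0,
  \qquad
  d\!\left(\frac{s_2}{c_2}\right)=\frac{\lambda\,dR_2}{c_2 c_{2+}}\le 0
\end{equation*}
for $\lambda<0$ (using $c_1,c_{1+}\ge1$ and, on $(a,b)$, $c_2,c_{2+}>0$); these are precisely the two monotonicity assertions, and dividing $c_1 s_2-c_2 s_1=1$ by $c_1 c_2>0$ gives \eqref{eq:sc1}. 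I expect the main obstacle to be exactly the sharp $x$-ranges in (iii): the first-order lower bounds only yield the limits on $(\inf\{x\colon\int_0^x R_1\,dR_2>0\},b)$ and the corresponding set for $s_1$, and matching these with the intervals $(a,b)$ and $(a_1,b)$ requires the support-interlacing input from the assumptions. The other computations are routine, the left/right-continuous bookkeeping in the quotient rule being harmless since by Assumption~\ref{assum:R1R2} the atoms of $R_1$ and $R_2$ never coincide.
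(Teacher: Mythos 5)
Your treatment of (i), of the nonnegativity claim in (iii), and of (iv) is essentially the paper's own argument: the paper also obtains \eqref{eq:Kernel_U} entrywise by applying the Green identity \eqref{eq:G2} to pairs of columns of the fundamental matrix taken at $\lambda$ and at $\conj{\mu}$ (so the conjugation symmetry you invoke is not even needed if you work with the solutions at $\conj{\mu}$ directly), proves nonnegativity from the power-series expansion in $\lambda$ with nonnegative coefficient functions, uses exactly the lower bounds \eqref{eq:phi_ineq}, and proves (iv) from \eqref{eq:Liouv}, \eqref{eq:Liouv+}, \eqref{eq:quotient-rule} and \eqref{eq:in-by-parts}; your identity $d(s_2/c_2)=\lambda\,dR_2/(c_2c_{2+})$ is just the reciprocal form of the paper's $d(c_2/s_2)=-\lambda\,dR_2/(s_2s_{2+})$, and the left/right bookkeeping is indeed innocuous by Assumption~\ref{assum:R1R2}. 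For (ii) you take a genuinely different route: the paper deduces analyticity from the identity in (i), which yields $U(x,\mu)^*=JU(x,\conj{\mu})^{-1}J^T$ and a difference-quotient formula, whereas you use the Picard--Neumann series with the Stieltjes--Gronwall estimate $\int_0^x V^n\,dV\le V(x)^{n+1}/(n+1)$ (valid with the left-continuous convention). Both are correct; your version is more elementary and delivers the nonnegativity in (iii) at the same time.

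The one genuine gap is exactly where you flagged it, and it cannot be closed as you propose. The positivity of $\int_0^x R_1\,dR_2$ for all $x\in(a,b)$, and of the iterated integral for all $x\in(a_1,b)$, does \emph{not} follow from the hypothesis that $(0,x)$ contains growth points of $R_1$ and $R_2$ together with Assumptions~\ref{assum:R1R2} and~\ref{assum:surj}. Take $dR_1=\delta_2$ and $dR_2=\delta_1+\delta_3$ on $[0,b)$ with $b>3$: both assumptions hold, $a=1$, $a_1=2$, and for $x\in(2,3]$ the interval $(0,x)$ contains growth points of both functions, yet $\int_0^x R_1\,dR_2=0$; moreover a direct computation gives $s_1(x,\lambda)\equiv s_2(x,\lambda)\equiv1$ there, since $s_2(x,\cdot)$ is nonconstant precisely when $dR_2$ charges the set where $R_1>0$ inside $[0,x)$. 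So for the $s$-column no argument can produce \eqref{eq:s_12} on the stated ranges; the limits hold only for $x$ beyond the first growth point of $R_2$ lying to the right of a growth point of $R_1$ (for instance always when $\inf\supp dR_1<a$, as for a string with $R_1(x)=x$). The paper itself disposes of this case with ``the proof of \eqref{eq:s_12} is similar,'' but the situation is not symmetric: for the $c$-column the relevant first-order coefficients $R_2(x)$ and $\int_0^x R_2\,dR_1$ are positive exactly on $(a,b)$ and $(a_1,b)$ by the very definition of $a$ and $a_1$, while for the $s$-column the positivity thresholds are different points. Your proof of the $c$-limits, and everything else in the proposal, is complete and matches the paper; the $s$-part needs either an additional support condition or adjusted $x$-ranges.
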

\begin{proof}
\textbf{1.}
By \eqref{eq:G1}, for the triples
$(c_1(\cdot,\lambda), c_2(\cdot,\lambda), \lambda c_1(\cdot,\lambda))$
and
$(c_1(\cdot,\mu), c_2(\cdot,\mu), \mu c_1(\cdot,\mu))$
belonging to $\cT$ one obtains
\begin{equation}\label{eq:V11}
	(\lambda-\conj{\mu}) \int_0^x c_1(t,\lambda) c_1(t,\conj{\mu})\, dR_2 =
	c_1(x,\lambda) c_2(x,\conj{\mu}) - c_2(x,\lambda) c_1(x,\conj{\mu}).
\end{equation}
Similarly, for
$(s_1(\cdot,\lambda),s_2(\cdot,\lambda), \lambda s_1(\cdot,\lambda))$
and
$(s_1(\cdot,\mu),s_2(\cdot,\mu), \mu s_1(\cdot,\mu))$
one obtains
\begin{equation}\label{eq:V22}
	(\lambda-\conj{\mu}) \int_0^x s_1(t,\lambda) s_1(t,\conj{\mu})\, dR_2 =
	s_1(x,\lambda) s_2(x,\conj{\mu}) - s_2(x,\lambda) s_1(x,\conj{\mu}).
\end{equation}
And finally for
$(c_1(\cdot,\lambda),c_2(\cdot,\lambda), \lambda c_1(\cdot,\lambda))$
and
$(s_1(\cdot,\mu),s_2(\cdot,\mu), \mu s_1(\cdot,\mu))$
one obtains
\begin{equation}\label{eq:V12}
	(\lambda-\conj{\mu}) \int_0^x c_1(t,\lambda) s_1(t,\conj{\mu})\, dR_2 =
	c_1(x,\lambda) s_2(x,\conj{\mu}) - c_2(x,\lambda) s_1(x,\conj{\mu}) - 1.
\end{equation}
The statement (i) is implied by~\eqref{eq:V11}--\eqref{eq:V12}.

\textbf{2.}
It follows from~\eqref{eq:Kernel_U} that
\begin{equation*}
	U(x,\mu)^* = JU(x,\conj{\mu})^{-1} J^T.
\end{equation*}

Therefore,
\begin{equation*}
	\frac{U(x,\lambda) - U(x,\conj{\mu})}{\lambda-\conj{\mu}}=
	U(x,\conj{\mu}) J^T\int_0^x
	\begin{bmatrix} c_1(t,\conj{\mu})  \\ s_1(t,\conj{\mu})\end{bmatrix}
		\begin{bmatrix} c_1(t,\lambda) & s_1(t,\lambda) \end{bmatrix} dR_2(t),
\end{equation*}
hence $U(x,\lambda)$ is holomorphic on $\dC$which shows (ii).

\textbf{3.} To show (iii), expanding $c_1(x,\lambda)$ and $c_2(x,\lambda)$ in series in $\lambda$
\begin{equation*}
c_1(x,\lambda)=1-\lambda\varphi_1(x)+\lambda^2\varphi_2(x)+\dots,\quad
c_2(x,\lambda)=-\lambda\psi_1(x)+\lambda^2\psi_2(x)+\dots
\end{equation*}
one obtains from~\eqref{eq:IntSys} that
\begin{equation}\label{eq:phi_1}
	\psi_1(x)=R_2(x), \quad \varphi_1(x)=\int_0^xR_2(t)\,dR_1(t)
\end{equation}
\begin{equation}\label{eq:phi_n}
	\psi_n(x)=\int_0^x\varphi_{n-1}(t)\,dR_2(t), \quad \varphi_n(x)=\int_0^xdR_1(t)\int_0^t\varphi_{n-1}(s)\,dR_2(s),\quad n\in\dN,\,n\ge 2.
\end{equation}
This implies that $\varphi_n(x)\ge 0$, $\psi_n(x)\ge 0$ for $n\in \dN$ and hence
\begin{equation*}
	c_1(x,\lambda)\ge 0, \quad c_2(x,\lambda)\ge 0
	\quad \text{for} \quad x\in [0,b),\ \lambda\in\dR_-.
\end{equation*}
Moreover, it follows from~\eqref{eq:phi_1} that
\begin{equation}\label{eq:phi_ineq}
	c_1(x,\lambda)\ge 1+|\lambda|\int_0^xR_2(t)\,dR_1(t), \quad
	c_2(x,\lambda)\ge |\lambda|R_2(x).
\end{equation}
Therefore, the relations \eqref{eq:c_12} hold since
\begin{equation*}
	\int_0^x R_2(t)\,dR_1(t) > 0 \ \text{for}\ x\in(a_1,b)
	\quad \text{and} \quad
	R_2(x)>0 \ \text{for}\ x\in(a,b).
\end{equation*}

The proof of \eqref{eq:s_12} is similar.

\textbf{4.}
The identity \eqref{eq:Liouv} yields
\begin{equation}\label{eq:cs12Id}
	\frac{s_2(x,\lambda)}{c_2(x,\lambda)}-\frac{s_1(x,\lambda)}{c_1(x,\lambda)}=\frac{1}{c_1(x,\lambda)c_2(x,\lambda)}
\end{equation}
This proves the inequality~\eqref{eq:sc1}.

It follows from \eqref{eq:IntSys}, \eqref{eq:in-by-parts}, \eqref{eq:quotient-rule}, and \eqref{eq:Liouv+} that
\begin{equation*}
	d\left(\frac{s_1(x,\lambda)}{c_1(x,\lambda)}\right) =
	\frac{c_{1+}(x,\lambda)s_2(x,\lambda)-c_2(x,\lambda)s_{1+}(x,\lambda)}{c_1(x,\lambda) c_{1+}(x,\lambda)} dR_1(x) =
	\frac{1}{c_1(x,\lambda) c_{1+}(x,\lambda)}\,dR_1(x)
\end{equation*}
and hence
\begin{equation}\label{eq:c1s1}
	\frac{s_1(x,\lambda)}{c_1(x,\lambda)}=\int_0^x\frac{1}{c_1(t,\lambda) c_{1+}(t,\lambda)}\,dR_1(t).
\end{equation}
Since $c_1(x,\lambda)$, $ c_{1+}(x,\lambda)>0$ for $\lambda\in\dR_-$ and $x\in[0,b)$, the function $\frac{s_1(x,\lambda)}{c_1(x,\lambda)}$ is increasing on $[0,b)$.

Similarly, by~\eqref{eq:IntSys}, \eqref{eq:in-by-parts}, \eqref{eq:quotient-rule}, and \eqref{eq:Liouv++}
\begin{equation}\label{eq:cs2}
	d\left(\frac{c_2(x,\lambda)}{s_2(x,\lambda)}\right) =
\frac{-\lambda}{s_2(x,\lambda) s_{2+}(x,\lambda)}\,dR_2(x), \quad x\in[0,b)
\end{equation}
and hence the function $\frac{c_2(x,\lambda)}{s_2(x,\lambda)}$ is increasing on $[0,b)$.
This proves (iv).
Notice, that the function $\frac{s_2(x,\lambda)}{c_2(x,\lambda)}$ is not defined on $[0,a]$.

\end{proof}

\subsection{The Evans-Everitt condition in the limit circle case}
\begin{proposition} \label{prop:1}
	The system $S[R_1,R_2]$
is limit circle at $b$ if and only if $1, R_1\in \cL^2(R_2)$.
\end{proposition}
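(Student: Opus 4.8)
The plan is to deduce the statement from the $\lambda$‑independence of the number of square‑integrable solutions, after identifying the two homogeneous solutions at $\lambda=0$ with $1$ and $R_1$. By the definition of the limit circle case, the system is limit circle at $b$ if and only if $\dim\sN_{\lambda_0}(T_{\max})=2$ for $\lambda_0\in\nCR$; since the homogeneous solution space at $\lambda_0$ is two‑dimensional and spanned by $c(\cdot,\lambda_0),s(\cdot,\lambda_0)$, and since the map $u\mapsto\pi u_1$ is injective on it (if $u$ is a homogeneous solution with $\pi u_1=0$, then $du_2=-\lambda_0 u_1\,dR_2=0$ and $du_1=u_2\,dR_1$ give $u_1=u_1(0)+u_2(0)R_1$, hence $u_1(0)\,\pi 1+u_2(0)\,\pi R_1=0$ and $u_1(0)=u_2(0)=0$ by~\eqref{eq:assum_cond}), this is the same as requiring $c_1(\cdot,\lambda_0),s_1(\cdot,\lambda_0)\in\cL^2(R_2)$. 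On the other hand, putting $\lambda=0$ in~\eqref{eq:IntSys} yields $u_2\equiv u_2(0)$ and $u_1(x)=u_1(0)+u_2(0)R_1(x)$, so $c_1(\cdot,0)\equiv1$ and $s_1(\cdot,0)=R_1$. Hence it suffices to prove the following invariance statement: \emph{if $c_1(\cdot,\mu),s_1(\cdot,\mu)\in\cL^2(R_2)$ for some $\mu\in\nC$, then $c_1(\cdot,\lambda),s_1(\cdot,\lambda)\in\cL^2(R_2)$ for every $\lambda\in\nC$.} Applying it with $\mu=\lambda_0\in\nCR$, $\lambda=0$, and with $\mu=0$, $\lambda=\lambda_0$ then closes the chain of equivalences.

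To prove the invariance statement I would use variation of parameters. Applying the second Green's identity~\eqref{eq:G2} to a homogeneous solution $u(\cdot,\lambda)$ (with $f=\lambda u_1$) and to the solutions $c(\cdot,\mu)$, resp. $s(\cdot,\mu)$, and using $[c(\cdot,\mu),s(\cdot,\mu)]\equiv1$, one obtains, for $y\in\{c_1(\cdot,\lambda),s_1(\cdot,\lambda)\}$ with the matching $y_0\in\{c_1(\cdot,\mu),s_1(\cdot,\mu)\}$,
\begin{equation*}
	y(x)=y_0(x)+(\lambda-\mu)\int_0^x\bigl(c_1(x,\mu)s_1(t,\mu)-s_1(x,\mu)c_1(t,\mu)\bigr)\,y(t)\,dR_2(t),\qquad x\in[0,b);
\end{equation*}
for the Kre\u{\i}n string $R_1(x)=x$ and $\mu=0$ this is exactly~\eqref{eq:Krein_S}. (Identity~\eqref{eq:G2} is available here because its proof uses only local bounded variation, not square‑integrability.) Writing $\chi:=|c_1(\cdot,\mu)|+|s_1(\cdot,\mu)|\in\cL^2(R_2)$ and $N:=\int_0^b\chi^2\,dR_2<\infty$, the kernel is bounded by $\chi(x)\chi(t)$, so the Cauchy--Schwarz inequality together with $(a+b)^2\le 2a^2+2b^2$ gives, with $F(x):=\int_0^x|y|^2\,dR_2$ (finite for $x<b$ since $y\in BV_{\loc}[0,b)$ and $R_2$ is finite on $[0,x]$),
\begin{equation*}
	|y(x)|^2\le 2|y_0(x)|^2+2|\lambda-\mu|^2N\,\chi(x)^2F(x).
\end{equation*}
Integrating this against $dR_2$ over $[0,x)$ and invoking Gronwall's inequality (in its Lebesgue--Stieltjes form, where the exponential bound comes from $\prod(1+a_i)\le e^{\sum a_i}$) yields
\begin{equation*}
	F(x)\le 2\Bigl(\int_0^b|y_0|^2\,dR_2\Bigr)\exp\bigl(2|\lambda-\mu|^2N^2\bigr),\qquad x<b,
\end{equation*}
so $\sup_{x<b}F(x)<\infty$ and hence $y\in\cL^2(R_2)$, which proves the invariance statement.

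Putting everything together: limit circle at $b$ $\Leftrightarrow$ $\dim\sN_{\lambda_0}(T_{\max})=2$ for $\lambda_0\in\nCR$ $\Leftrightarrow$ $c_1(\cdot,\lambda_0),s_1(\cdot,\lambda_0)\in\cL^2(R_2)$ $\Leftrightarrow$ $c_1(\cdot,0),s_1(\cdot,0)\in\cL^2(R_2)$ $\Leftrightarrow$ $1,R_1\in\cL^2(R_2)$, the third equivalence being the invariance statement (note that for its application with $\mu=0$ one has $\chi=1+R_1\in\cL^2(R_2)$ precisely under the hypothesis $1,R_1\in\cL^2(R_2)$). I expect the main obstacle to be the variation‑of‑parameters step and the ensuing Gronwall estimate: one must check carefully that~\eqref{eq:G2} applies to non‑square‑integrable solutions, keep track of the left/right‑continuous versions of $c_1,s_1$ entering the computation (harmless here since $R_1$ and $R_2$ share no discontinuities by Assumption~\ref{assum:R1R2} and the outer integration is against $dR_2$), and handle atoms of $dR_2$ in the Stieltjes form of Gronwall's lemma. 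The remaining ingredients — the $\lambda=0$ computation and the injectivity remark using Assumption~\ref{assum:surj} — are routine.
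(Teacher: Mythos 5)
Your proposal is correct and follows the same overall strategy as the paper: reduce to $\lambda=0$, where the substitution into~\eqref{eq:IntSys} gives $c_1(\cdot,0)\equiv 1$, $s_1(\cdot,0)=R_1$, so the issue becomes the $\lambda$-independence of the number of solutions with first component in $\cL^2(R_2)$. The only real difference is that the paper disposes of this invariance in one line by citing the classical Weyl-type argument (\cite[Theorem 5.6.1]{Atk64}, see also \cite[Theorem 4.5]{Str19}), whereas you prove it in-line: the variation-of-parameters identity you derive from the second Green identity~\eqref{eq:G2} together with~\eqref{eq:Liouv} is exact in this Stieltjes setting (no left/right-limit corrections enter, as you note), and the Cauchy--Schwarz plus Stieltjes--Gronwall estimate is sound, so your argument is a correct self-contained replacement for the citation; your explicit injectivity remark based on Assumption~\ref{assum:surj}, identifying $\sN_{\lambda}(T_{\max})$ with the solutions having square-integrable first component, is also a point the paper leaves implicit. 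What your route buys is self-containedness (and invariance for all $\lambda\in\nC$, not just nonreal $\lambda$); what the paper's citation buys is brevity.
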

\begin{proof}
	Using the well-known procedure from~\cite[Theorem 5.6.1]{Atk64} (see also \cite[Theorem 4.5]{Str19}) one can show that $S[R_1,R_2]$ is limit circle at $b$
if and only if $c_1(x,0)$ and $s_1(x,0) $ belong to $\cL^2(R_2)$.
	Substitution of $\lambda=0$ to~\eqref{eq:IntSys} immediately provides $c_2(x,0) = 0$, $s_2(x,0) = 1 $ and hence $c_1(x,0) = 1$, $s_1(x,0) = R_1(x)$.
\end{proof}

If the system $S[R_1,R_2]$
is regular at $b$, then the following limits exist:
\begin{equation}\label{eq:lim_1}
	c_1(b,\lambda)=\lim_{t\to b}c_1(t,\lambda),\quad s_1(b,\lambda)=\lim_{t\to b}s_1(t,\lambda),
\end{equation}
\begin{equation}\label{eq:lim_2}
	c_2(b,\lambda)=\lim_{t\to b}c_2(t,\lambda),\quad s_2(b,\lambda)=\lim_{t\to b}s_2(t,\lambda).
\end{equation}

Assume now that the system $S[R_1,R_2]$
is in the limit circle at $b$.
One can check (see \cite[Section 10.7]{KacK68}, \cite[Theorem 3.8]{Str18}) that for every element $\bm{u} = \begin{bmatrix} \pi u_1 \\ \pi f \end{bmatrix} \in T_{\max}$ the limit
\begin{equation} \label{eq:f2d}
	u_2(b) = u_2(0) - \int_0^b f\,dR_2
\end{equation}
exists and is well defined.
Therefore, the limits~\eqref{eq:lim_2} exist.

\begin{lemma}\label{lem:psi}
	Let the system $S[R_1,R_2]$
be limit circle at $b$.
	Then for every $\bm{u} = \begin{bmatrix} \pi u_1 \\ \pi f \end{bmatrix} \in T_N^*$ one has $u_2\in \cL^2(R_1)$ and the following two equalities hold:
	\begin{equation} \label{eq:2.26}
		\lim_{x\to b} u_1(x) = u_1(0) + (f,R_1),
	\end{equation}
	\begin{equation} \label{eq:LC-SLP}
		\lim_{x\to b} u_1(x) u_2(x) = 0.
	\end{equation}
	If $\bm{u} = \begin{bmatrix} \pi u_1 \\ \pi f \end{bmatrix} \in T_{\max}$ and the endpoint $b$ is singular then $\bm{u} = \begin{bmatrix} \pi u_1 \\ \pi f \end{bmatrix} \in T_N^*$ provided \eqref{eq:LC-SLP} holds.
\end{lemma}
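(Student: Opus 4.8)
The plan is to reduce the entire statement to the single condition $u_2(b)=0$. In the limit circle case the limit $u_2(b)$ exists for every $\bm u=[\pi u_1\ \pi f]^{T}\in T_{\max}$ by~\eqref{eq:f2d}, and by Proposition~\ref{prop:1} one has $1,R_1\in\cL^2(R_2)$; in particular $R_2(b)<\infty$, $\int_0^b|f|\,dR_2\le\|f\|_{R_2}\|1\|_{R_2}<\infty$ and $\int_0^b|f|R_1\,dR_2\le\|f\|_{R_2}\|R_1\|_{R_2}<\infty$. I would first prove the three conclusions $u_2\in\cL^2(R_1)$, \eqref{eq:2.26}, \eqref{eq:LC-SLP} for an \emph{arbitrary} $\bm u\in T_{\max}$ subject only to $u_2(b)=0$, and then identify $T_N^*$ with the set of such elements.

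\emph{The analytic core.} From~\eqref{eq:sys_lin_rel} we have $du_1=u_2\,dR_1$ and $du_2=-f\,dR_2$, and $u_2(b)=0$ combined with~\eqref{eq:f2d} gives $u_2(x)=\int_{[x,b)}f\,dR_2$, hence $|u_2(x)|^2\le\|f\|_{R_2}^2\,(R_2(b)-R_2(x))$ by Cauchy--Schwarz. Integrating this bound against $dR_1$ and interchanging the order of integration (Tonelli, together with Assumption~\ref{assum:R1R2}, which lets us replace $R_{1+}$ by $R_1$ $dR_2$-a.e.) yields $\int_0^b|u_2|^2\,dR_1\le\|f\|_{R_2}^2\,(R_1,1)<\infty$, so $u_2\in\cL^2(R_1)$. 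For~\eqref{eq:2.26} I would substitute $u_2(t)=\int_{[t,x)}f\,dR_2+\int_{[x,b)}f\,dR_2$ into $u_1(x)=u_1(0)+\int_0^x u_2\,dR_1$ and apply Fubini to the resulting double integral, obtaining
\[
	u_1(x)=u_1(0)+\int_{[0,x)}f(y)R_1(y)\,dR_2(y)+R_1(x)\int_{[x,b)}f\,dR_2 .
\]
The middle term tends to $(f,R_1)$ as $x\to b$, and the remainder is estimated by $\bigl|R_1(x)\int_{[x,b)}f\,dR_2\bigr|\le\bigl(R_1(x)^2(R_2(b)-R_2(x))\bigr)^{1/2}\|f\|_{R_2}\le\bigl(\int_{[x,b)}R_1^2\,dR_2\bigr)^{1/2}\|f\|_{R_2}\to0$, the last inequality using the monotonicity of $R_1$ and the convergence using $R_1\in\cL^2(R_2)$. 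Hence $\lim_{x\to b}u_1(x)=u_1(0)+(f,R_1)$ is finite, and since $\lim_{x\to b}u_2(x)=u_2(b)=0$, equality~\eqref{eq:LC-SLP} follows at once.

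\emph{Identification of $T_N^*$ and the converse.} Recall $T_N=\{\bm u\in T_{\max}\colon u_1(0)=u_2(0)=u_2(b)=0\}$. For $\bm u\in T_{\max}$ and $\bm v=[\pi v_1\ \pi g]^{T}\in T_N$ the second Green's identity~\eqref{eq:G2}, applied to $(u_1,u_2,f)$ and to the complex conjugate of $(v_1,v_2,g)$ (the boundary term at $0$ vanishing since $v_1(0)=v_2(0)=0$), gives upon letting $x\to b$ — using the analytic core for $\bm v$, so that $v_1(b)$ exists, $v_2(x)\to0$ and, by the same remainder estimate, $u_1(x)\overline{v_2(x)}\to0$ — the formula $(f,v_1)-(u_1,g)=u_2(b)\,\overline{v_1(b)}$. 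Therefore $\bm u\in T_N^*$ iff $u_2(b)\,\overline{v_1(b)}=0$ for all $\bm v\in T_N$. Any $\bm v\in T_N$ with $v_1(b)=0$ has $[v,w]_b=0$ for all $\bm w\in T_{\max}$ (by the same estimates), hence belongs to $T_{\min}$ by~\eqref{eq:def-Amin}; since $T_N\ne T_{\min}$, there is $\bm v\in T_N$ with $v_1(b)\ne0$, so $T_N^*=\{\bm u\in T_{\max}\colon u_2(b)=0\}$. Combined with the analytic core this proves the first part of the lemma. Finally, if $b$ is singular then $R_1(b)+R_2(b)=\infty$ while $R_2(b)<\infty$, so $R_1(b)=\infty$; if \eqref{eq:LC-SLP} holds but $u_2(b)\ne0$, then $u_2$ keeps a fixed sign and is bounded away from $0$ near $b$, whence $|u_1(x)|=\bigl|u_1(x_0)+\int_{x_0}^{x}u_2\,dR_1\bigr|\to\infty$ and $|u_1(x)u_2(x)|\to\infty$, contradicting~\eqref{eq:LC-SLP}; thus $u_2(b)=0$ and $\bm u\in T_N^*$.

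The step I expect to be the main obstacle is the convergence in~\eqref{eq:2.26}, and precisely the vanishing of the boundary remainder $R_1(x)\int_{[x,b)}f\,dR_2$: this is the one place where the full limit circle hypothesis $R_1\in\cL^2(R_2)$ is used essentially, via $R_1(x)^2(R_2(b)-R_2(x))\le\int_{[x,b)}R_1^2\,dR_2\to0$, whereas the weaker information of the limit point case (only convergence of $\int_0^b f\,dR_2$) would not control it. The Fubini interchanges are routine once $\int_0^b|f|R_1\,dR_2<\infty$ and $\int_0^b(R_2(b)-R_2(x))\,dR_1(x)=(R_1,1)<\infty$ are noted.
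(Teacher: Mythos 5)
Your proof is correct, and its analytic core is essentially the paper's: you arrive at the same representation $u_1(x)=u_1(0)+\int_0^x R_1f\,dR_2+R_1(x)u_2(x)$ (the paper gets it by the integration-by-parts formula \eqref{eq:in-by-parts}, you by Fubini, which is equivalent), and you kill the remainder using $R_1\in\cL^2(R_2)$, just as in \eqref{eq:l1}--\eqref{eq:l2}; from there \eqref{eq:2.26} and \eqref{eq:LC-SLP} follow identically. Two points are genuinely different. First, you prove $u_2\in\cL^2(R_1)$ directly by Cauchy--Schwarz and Tonelli from $|u_2(x)|^2\le\|f\|_{R_2}^2(R_2(b)-R_2(x))$, whereas the paper deduces it from the first Green's identity \eqref{eq:G1} together with \eqref{eq:2.26}; both work, and your route has the small advantage of not needing \eqref{eq:2.26} first. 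Second, you actually prove the description $T_N^*=\{\bm u\in T_{\max}\colon u_2(b)=0\}$, which the paper merely asserts (``as follows from \eqref{eq:G2}'') and uses silently in its proof of the lemma; making this explicit is a real improvement, since the lemma's hypothesis is membership in $T_N^*$ and the whole argument hinges on $u_2(b)=0$.

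The one place you should shore up is the step ``since $T_N\ne T_{\min}$, there is $\bm v\in T_N$ with $v_1(b)\ne0$'': the inequality $T_N\ne T_{\min}$ is used but not justified, and without it you only get the easy inclusion $\{u_2(b)=0\}\subseteq T_N^*$, which is the direction needed for the converse part but not for the main part. The fact is true and cheap to prove at this point: in the limit circle case $n_\pm(T_{\min})=2$, so $\dim(T_{\max}/T_{\min})=4$, while $T_N$ is cut out of $T_{\max}$ by only three linear conditions and contains $T_{\min}$ (take $\bm v$ the constant solution $(1,0,0)$ in \eqref{eq:def-Amin} to see $u_2(b)=0$ for $\bm u\in T_{\min}$); hence $T_N\supsetneq T_{\min}$. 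Alternatively, Assumption~\ref{assum:surj} lets you build an explicit $\bm v\in T_N$ with $v_1(b)=(g,R_1)\ne0$ by choosing a compactly supported $g$ with $(g,1)=0$ and $(g,R_1)\ne0$. (Also, the boundary term in your Green's identity computation should read $-u_2(b)\,\conj{v_1(b)}$; the sign is immaterial for the conclusion.) With that one-line justification added, your argument is complete and in fact somewhat more self-contained than the paper's.
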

\begin{proof}
	Let $\bm{u} = \begin{bmatrix} \pi u_1 \\ \pi f \end{bmatrix} \in T_N^*$.
	Applying the integration-by-parts formula~\eqref{eq:in-by-parts} to the first line of~\eqref{eq:sys_lin_rel} one gets
	\begin{equation} \label{eq:u1_ibp}
		u_1(x) = u_1(0) + u_2(x) R_1(x) + \int_0^x R_1(t) f(t)\, dR_2(t).
	\end{equation}
	We recall that in the limit circle case $1, R_1 \in \cL^2(R_2)$ and $f \in \cL^2(R_2)$ by the assumption of the lemma.
	The condition $u_2(b)=0$ provides $u_2(x) = \int_x^b f\, dR_2$ and hence~\eqref{eq:u1_ibp} can be rewritten as
	\begin{equation} \label{eq:l1}
		u_1(x) = u_1(0) + (f,R_1) - \int_x^b (R_1(t)-R_1(x)) f(t)\, dR_2(t).
	\end{equation}
	 Note the following estimation:
	\begin{equation} \label{eq:l2}
		\begin{split}
			\left| \int_x^b (R_1(t)-R_1(x)) f(t)\, dR_2(t) \right| &\le
			\int_x^b (R_1(t)-R_1(x)) |f(t)|\, dR_2(t) \\
			&\le \int_x^b R_1 |f|\, dR_2 \to 0\quad \text{as}\quad x \to b.
		\end{split}
	\end{equation}
	Now~\eqref{eq:2.26} follows from \eqref{eq:l1} and \eqref{eq:l2}, and~\eqref{eq:LC-SLP} finally follows from~\eqref{eq:2.26}.

	The claim $u_2\in \cL^2(R_1)$ for $\bm{u} = \begin{bmatrix} \pi u_1 \\ \pi f \end{bmatrix} \in T_N^*$ follows from~\eqref{eq:2.26} and the first Green's identity~\eqref{eq:G1}
	\begin{equation}\label{eq:Green1LC}
		\begin{split}
			\int_0^b f(t) \conj{u_1(t)}\,dR_2(t)
			&= \int_0^b |u_2|^2 dR_1(t) - \lim_{x\to b} u_2(x) \conj{u_1(x)} + u_2(0) \conj{u_1(0)}\\
			&= \int_0^b |u_2|^2 dR_1(t) + u_2(0) \conj{u_1(0)}.
		\end{split}
	\end{equation}

	Now assume that the endpoint $b$ is singular and $\bm{u} = \begin{bmatrix} \pi u_1 \\ \pi f \end{bmatrix} \in T_{\max}$.
	From~\eqref{eq:f2d} we have $u_2(b)=a$ where $a \in \nC$.
	In the limit circle case the singular endpoint $b$ implies $R_1(b)=\infty$.
	If $a\neq0$ then from~\eqref{eq:sys_lin_rel} we get $u_1(b)=\pm\infty$ and hence~\eqref{eq:LC-SLP} does not hold.
\end{proof}
\begin{remark}
The condition \eqref{eq:LC-SLP} for Sturm-Liouville operators
in the limit circle case was introduced and studied by Evans and Everitt in~\cite{EvEver}.
We will call it the Evans--Everitt condition.
\end{remark}

\subsection{Boundary triples for integral systems in the limit circle case}
\begin{definition}[see~\cite{Ben89,Kost13}] \label{def:5.1}
	The function $m(\lambda,b,\infty)$ from~\eqref{eTWeyl} for which the solution
	\begin{equation} \label{eTWeyl2}
		\psi^N(t,\lambda)=s(t,\lambda)-m(\lambda,b,\infty){c}(t,\lambda), \qquad t \in I,
	\end{equation}
	satisfies the condition
	\begin{equation} \label{eq:m_functLC}
		\psi^N_2(b,\lambda) = 0,
	\end{equation}
	is called the \emph{Neumann $m$-function} of the system $S[R_1,R_2]$
on $I$ subject to the boundary condition~\eqref{eq:m_functLC}.
\end{definition}

It follows from~\eqref{eTWeyl} and the condition $\psi_2^N(b,\lambda)=0$ that
$s_2(b,\lambda)-m(\lambda,b,\infty)c_2(b,\lambda)=0$ which proves the formula
\begin{equation}\label{eq:WFA_N2}
 m(\lambda,b,\infty)=
 \frac{s_2(b,\lambda)}{c_2(b,\lambda)}.
\end{equation}

We will show below that the function $ m(\lambda,b,\infty)$ is a Weyl function of a one-dimensional symmetric extension $T_N$ of the linear relation $T_{\min}$ defined by
\begin{equation}\label{eq:A_N}
	T_{N} = \left\{
	\bm{u} = \begin{bmatrix} \pi u_1 \\ \pi f \end{bmatrix} \colon
	(u_1,u_2,f) \in \cT,\ u_1(0)=u_2(0)=u_2(b)=0 \right\}.
\end{equation}
As follows from~\eqref{eq:G2} the adjoint linear relation $T_N^*$ is of the form
\begin{equation}
	T_N^* = \left\{
	\bm{u}=\begin{bmatrix} \pi u_1 \\ \pi f \end{bmatrix} \colon
	(u_1,u_2,f) \in \cT:\, u_2(b)=0 \right\}.
\end{equation}

\begin{proposition}\label{prop:BT_IS_N}
	Let the system $S[R_1,R_2]$ be singular and limit circle at $b$, let $T_N$ be defined by \eqref{eq:A_N}, and let $m(\lambda,b,\infty)$ be the Neumann $m$-function of the system $S[R_1,R_2]$ given by~\eqref{eq:WFA_N2}.
	Then:
\begin{enumerate}
\item [(i)] $T_N$ is a symmetric nonnegative linear relation in $L^2(R_2)$ with deficiency indices $(1,1)$.
\item [(ii)] The triple $\Pi^N = (\dC,\Gamma_0^N,\Gamma_1^N)$, where
\begin{equation}\label{eq:GammaA_N}
	\Gamma_0^N \bm{u} = u_2(0), \quad \Gamma_1^N \bm{u} = -u_1(0), \quad \bm{u} \in T_N^*,
\end{equation}
is a boundary triple for $T_N^*$.
\item[(iii)] The Weyl function $m_N(\lambda)$ of $T_N$ corresponding to the boundary triple $\Pi^N$ coincides with the Neumann $m$-function $m(\lambda,b,\infty)$.
\item[(iv)] The Weyl function $m_N(\lambda)$ of $T_N$ coincides with the principal Titchmarsh-Weyl coefficient $q(\lambda)$ of the system $S[R_1,R_2]$,
belongs to the Stieltjes class $\cS$, and
\begin{equation}\label{eq:lim}
	\lim_{\lambda\to-\infty}m_N(\lambda)=R_{1+}(a),
\end{equation}
where $a=\inf\supp dR_2$.
\item[(v)] The Weyl function $m_N(\lambda)$ of $T_N$ admits the representation
	\begin{equation}\label{eq:Pol_m}
		m_N(\lambda)=-\frac{1}{R_2(b)\cdot\lambda}+\wt m(\lambda);
	\end{equation}
where $\wt m$ is a function from $\cS$ such that $\lim_{y\to 0}y\wt m(iy)=0$.
\end{enumerate}
\end{proposition}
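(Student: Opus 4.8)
The five parts are handled in order; the only genuinely delicate point is the identification of $q$ in (iv).

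\emph{Parts (i) and (ii).} For symmetry and nonnegativity of $T_N$ I use the first Green's identity~\eqref{eq:Green1LC}: for $\bm u\in T_N$ we have $u_1(0)=u_2(0)=u_2(b)=0$, so (the limit $u_1(b)$ being finite by~\eqref{eq:u1_ibp} precisely because $u_2(b)=0$) the boundary term at $b$ drops and $\langle f,u_1\rangle_{R_2}=\int_0^b|u_2|^2\,dR_1\ge0$; the same identity with $u_1(0)$ kept free gives $T_N\subseteq T_N^*$. From~\eqref{eq:G2} one reads off $T_N^*=\{\bm u:(u_1,u_2,f)\in\cT,\ u_2(b)=0\}$, whence $T_N^*/T_N$ is two--dimensional (the functionals $u_1(0)$, $u_2(0)$) and $n_\pm(T_N)=1$. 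For the triple $\Pi^N$, the abstract Green identity~\eqref{eq:1.9} is~\eqref{eq:G2} applied to $(u_1,u_2,f)$ and $(\conj{v_1},\conj{v_2},\conj g)$ in the limit $x\to b$: the term $[u,\conj v](b)=u_1(b)\conj{v_2(b)}-u_2(b)\conj{v_1(b)}$ vanishes since $u_2(b)=v_2(b)=0$, and $-[u,\conj v](0)=-u_1(0)\conj{v_2(0)}+u_2(0)\conj{v_1(0)}=\Gamma_1^N\bm u\,\conj{\Gamma_0^N\bm v}-\Gamma_0^N\bm u\,\conj{\Gamma_1^N\bm v}$. Surjectivity of $\Gamma^N$: given $(\alpha,\beta)\in\dC^2$, since $0<R_2(b)<\infty$ in the singular limit circle case I take $f\equiv\alpha/R_2(b)$ and the solution with $u_1(0)=-\beta$, $u_2(0)=\alpha$; then $u_2(x)=\int_x^b f\,dR_2\to0$, so $u_2(b)=0$, and the estimate~\eqref{eq:l1}--\eqref{eq:l2} (using only $u_2(b)=0$ and $1,R_1,f\in\cL^2(R_2)$) shows $u_1$ has a finite limit at $b$, hence $u_1\in\cL^2(R_2)$ and $\bm u\in T_N^*$ with $\Gamma^N\bm u=(\alpha,\beta)$. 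Finally $A_0=\ker\Gamma_0^N=\{u_2(0)=u_2(b)=0\}=A_N$ is then automatically self-adjoint, and nonnegative by the same Green's identity.

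\emph{Parts (iii) and (iv).} As $n_\pm(T_N)=1$, $\sN_\lambda(T_N^*)$ is spanned by $\psi^N(\cdot,\lambda)=s(\cdot,\lambda)-m(\lambda,b,\infty)c(\cdot,\lambda)$, which has $\psi^N_2(b,\lambda)=0$ (Definition~\ref{def:5.1}). Inserting $\bm u_\lambda=[\pi\psi^N_1;\lambda\pi\psi^N_1]$ into~\eqref{def:Weyl} and using~\eqref{eq:csK} gives $\Gamma_0^N\bm u_\lambda=\psi^N_2(0,\lambda)=1$ and $\Gamma_1^N\bm u_\lambda=-\psi^N_1(0,\lambda)=m(\lambda,b,\infty)$, so $m_N(\lambda)=m(\lambda,b,\infty)$; this is (iii). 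For (iv), from~\eqref{eq:sys_lin_rel} we have $c_1(x,\lambda)=1+\int_0^x c_2(t,\lambda)\,dR_1(t)$ and $s_1(x,\lambda)=\int_0^x s_2(t,\lambda)\,dR_1(t)$. In the limit circle case $c_1(\cdot,\lambda),s_1(\cdot,\lambda)\in\cL^2(R_2)\subset\cL^1(R_2)$, so $c_2(x,\lambda)\to c_2(b,\lambda):=-\lambda\int_0^b c_1\,dR_2$ (finite) and likewise $s_2(x,\lambda)\to s_2(b,\lambda)$; moreover $\int_0^b|c_2(t,\lambda)-c_2(b,\lambda)|\,dR_1(t)<\infty$, because $|c_2(t,\lambda)-c_2(b,\lambda)|\le|\lambda|\int_t^b|c_1|\,dR_2$ and Fubini (legitimate by Assumption~\ref{assum:R1R2}) turns the iterated integral into $|\lambda|\int_0^b|c_1(s,\lambda)|R_1(s)\,dR_2(s)$, finite by Cauchy--Schwarz and $c_1,R_1\in\cL^2(R_2)$. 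Since $R_1(b)=\infty$ in the singular limit circle case, this gives $\int_0^x c_2\,dR_1=c_2(b,\lambda)R_1(x)+O(1)$, similarly for $s_1$, hence
\[
q(\lambda)=\lim_{x\to b}\frac{s_1(x,\lambda)}{c_1(x,\lambda)}=\frac{s_2(b,\lambda)}{c_2(b,\lambda)}=m(\lambda,b,\infty)=m_N(\lambda)
\]
for all $\lambda\notin\sigma_p(A_N)$, in particular for $\lambda\in\dC\setminus[0,\infty)$ since $A_N\ge0$. (For $\lambda<0$ one may instead combine the monotonicity of Lemma~\ref{lem:2.4}(iv), identity~\eqref{eq:cs12Id}, and $c_1(x,\lambda)\to\infty$.) As a Weyl function $m_N$ is Herglotz--Nevanlinna and holomorphic on $\rho(A_N)\supseteq(-\infty,0)$, and by~\eqref{eq:c1s1} $m_N(\lambda)=q(\lambda)=\int_0^b\frac{dR_1(t)}{c_1(t,\lambda)c_{1+}(t,\lambda)}\ge0$ for $\lambda<0$; thus $m_N\in\cS$. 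For~\eqref{eq:lim} I use~\eqref{eq:c1s1} once more: $c_2(\cdot,\lambda)$ vanishes on $[0,a]$, so $c_1(\cdot,\lambda)\equiv c_{1+}(\cdot,\lambda)\equiv1$ there and $\int_{[0,a]}\frac{dR_1}{c_1c_{1+}}=R_{1+}(a)$, while on $(a,b)$ the integrand tends to $0$ as $\lambda\to-\infty$ ($dR_1$--a.e., by~\eqref{eq:c_12}) with a $dR_1$--integrable majorant (the integrand at a fixed $\lambda_0<0$, using monotonicity in $\lambda$); dominated convergence yields $\lim_{\lambda\to-\infty}m_N(\lambda)=R_{1+}(a)$.

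\emph{Part (v).} By~\eqref{eq:WFA_N2}, $m_N=s_2(b,\cdot)/c_2(b,\cdot)$, and by the limit circle theory both functions are entire; from $c_2(b,\lambda)=-\lambda\int_0^b c_1(t,\lambda)\,dR_2(t)$ with $c_1(\cdot,0)\equiv1$ one gets $c_2(b,0)=0$ and $\tfrac{d}{d\lambda}c_2(b,\lambda)|_{\lambda=0}=-R_2(b)$, so $c_2(b,\cdot)$ has a simple zero at $0$, $s_2(b,0)=1$, and $m_N$ has a simple pole at $0$ with $m_N(\lambda)=-1/(R_2(b)\lambda)+O(1)$. Writing the Stieltjes representation $m_N(\lambda)=a+\int_{[0,\infty)}d\sigma(t)/(t-\lambda)$ and comparing residues at $0$ gives $\sigma(\{0\})=\lim_{\lambda\uparrow0}(-\lambda)m_N(\lambda)=1/R_2(b)$; hence $\wt m(\lambda):=m_N(\lambda)+1/(R_2(b)\lambda)=a+\int_{(0,\infty)}d\sigma(t)/(t-\lambda)$ is again a Stieltjes function, and $\lim_{y\to0}y\wt m(iy)=0$ by dominated convergence since the remaining measure has no atom at $0$. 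This is~\eqref{eq:Pol_m}.

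\emph{Main obstacle.} The delicate step is the complex-$\lambda$ identity $q(\lambda)=m_N(\lambda)$ in part (iv): one has to isolate the leading $R_1(x)$--growth of both $s_1(x,\lambda)$ and $c_1(x,\lambda)$ as $x\to b$, and the integrability needed for the remainders (the Fubini/Cauchy--Schwarz estimate above) is exactly where Proposition~\ref{prop:1}, i.e. $1,R_1\in\cL^2(R_2)$, enters. A lesser technical point is justifying, in part (v), the analyticity of $c_2(b,\cdot)$, $s_2(b,\cdot)$ obtained by passing to the limit $x\to b$.
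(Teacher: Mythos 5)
Your proposal is correct, and parts (i)--(iii) and (v) run essentially parallel to the paper's proof (second Green's identity with vanishing boundary term at $b$ via Lemma~\ref{lem:psi}, the same computation of $\Gamma_0^N\bm u_\lambda=1$, $\Gamma_1^N\bm u_\lambda=m(\lambda,b,\infty)$; your explicit surjectivity construction with $f\equiv\alpha/R_2(b)$ is a detail the paper leaves implicit). The genuine difference is the key identification $q=m_N$ in (iv). The paper works only on $\lambda\in\dR_-$: there $s_1/c_1$ is increasing in $x$ (Lemma~\ref{lem:2.4}), the identity~\eqref{eq:cs12Id} together with $c_1(x,\lambda)c_2(x,\lambda)\to\infty$ (from \eqref{eq:phi_ineq} and $R_1(b)+R_2(b)=\infty$) gives $\lim_{x\to b}s_1/c_1=\lim_{x\to b}s_2/c_2=m_N(\lambda)$, and the equality is then propagated to $\dC\setminus\dR_+$ by holomorphy. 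You instead prove, for all non-real $\lambda$, the asymptotics $c_1(x,\lambda)=c_2(b,\lambda)R_1(x)+O(1)$ and $s_1(x,\lambda)=s_2(b,\lambda)R_1(x)+O(1)$ via the Fubini/Cauchy--Schwarz estimate (where Proposition~\ref{prop:1}, i.e.\ $1,R_1\in\cL^2(R_2)$, and Assumption~\ref{assum:R1R2} enter), plus $c_2(b,\lambda)\ne0$ off $[0,\infty)$ because $A_N\ge0$ is selfadjoint. This buys you the existence of the limit in~\eqref{eq:m_TW} for every non-real $\lambda$ directly, without appealing to normal-family/Stieltjes--Vitali arguments or analytic continuation, at the price of the integrability estimate; the paper's route is shorter but leans on positivity and monotonicity available only on the negative half-axis. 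Similarly, for~\eqref{eq:lim} the paper computes $\lim_{\lambda\to-\infty}s_1(x,\lambda)/c_1(x,\lambda)$ at fixed $x$ (with a case distinction at $a_1$) and then passes to $x\to b$, while you apply dominated convergence once to the full representation $m_N(\lambda)=\int_0^b dR_1/(c_1c_{1+})$ from~\eqref{eq:c1s1}, using monotonicity in $\lambda$ for the majorant -- arguably a cleaner handling of the double limit. In (v) the paper gets $\lambda m_N(\lambda)\to-1/R_2(b)$ as $\lambda\to0-$ by monotone convergence and then asserts~\eqref{eq:Pol_m}; your residue/atom extraction from the Stieltjes representation supplies the step the paper glosses over, and the analyticity of $c_2(b,\cdot)$, $s_2(b,\cdot)$ that you flag can be avoided altogether by taking the limit along $\lambda\uparrow0$ as in the paper. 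No gaps of substance.
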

\begin{proof}
\textbf{1.} To show ${\textrm (i)}$ and ${\textrm (ii)}$,
let the tuples $(u_1,u_2,f)$ and $(v_1,v_2,g)$ satisfy the system~\eqref{eq:sys_lin_rel} and assume that $u_2(b)=v_2(b)=0$, i.e.
$\bm{u}, \bm{v} \in T_N^*$.
Let $\mu\in\dR$. By
formula~\eqref{eq:Liouv} at least one of the values $c_2(b,\mu)$ and $s_2(b,\mu)$ is not equal to 0. Assume that $c_1(b,\mu)\ne 0$ and let us set $c(x) \coloneqq c(x,\mu)$.
Due to the identity
\begin{equation}\label{eq:Wron_Id}
	[u,v]_b = c_2(b,\mu)^{-1}
	\left\{ [u(b),c(b,\mu)] \conj{v_2(b)} - u_2(b) [\conj{v(b)},c(b,\mu)] \right\}
\end{equation}
the second Green's identity \eqref{eq:G2} is of the form
\begin{equation}\label{eq:GreenN}
  \int_0^b (f\conj{v_1} - u_1\conj{g})\, dR_2(t) =
  [u,\conj{v}]_b - [u,\conj{v}]_0
  =u_2(0)\conj{v_1(0)}-u_1(0)\conj{v_2(0)}.
\end{equation}
By Definition~\ref{def:btriple}
 the boundary triple for $T_N^*$ can be taken as $\Pi^N = (\dC,\Gamma_0^N,\Gamma_1^N)$, with
$\Gamma_0^N, \Gamma_1^N$ given in \eqref{eq:GammaA_N}.

It follows from the first Green's identity \eqref{eq:Green1LC} and Lemma~\ref{lem:psi} that for every $(\pi u_1,\pi f)^T\in T_N$
\begin{equation} \label{eq:Green1_}
  \int_0^b f(t)\conj{u_1(t)}\,dR_2(t)=\int_0^b |u_2|^2dR_1(t)\ge 0.
\end{equation}

\textbf{2.} Now ${\textrm (iii)}$ is shown.
The defect subspace $\sN_\lambda(T_N^*)$ is spanned by the function $\psi_1^N(\cdot,\lambda)$,
where $\psi^N(t,\lambda)$ is the Weyl solution from \eqref{eTWeyl2} corresponding to the Neumann $m$-function $m(\lambda,b,\infty)$. Denote $\bm{u}^N(t,\lambda)=(\psi_1^N(\cdot,\lambda),\lambda\psi_1^N(\cdot,\lambda))^T\in\wh \sN_\lambda(T_N^*)$.
Using the formulas~\eqref{eTWeyl2} and~\eqref{eq:GammaA_N} one obtains
\begin{equation*}
\Gamma_1^N\bm{u}^N(\cdot,\lambda)=-\psi_1^N(0,\lambda)=m(\lambda,b,\infty),\quad \Gamma_0^N\bm{u}^N(\cdot,\lambda)=\psi^N_2(0,\lambda)=1
\end{equation*}
and hence by~\eqref{def:Weyl} the Weyl function $m_N(\lambda)$ is of the form
\begin{equation}
  m_N(\lambda)=\frac{\Gamma_1^N\bm{u}^N(\cdot,\lambda)}{\Gamma_0^N\bm{u}^N(\cdot,\lambda)}
  =m(\lambda,b,\infty).
\end{equation}
Therefore, the Weyl function $m_N(\lambda)$ coincides with
the Neumann $m$-function $m(\lambda,b,\infty)$.

\textbf{3.}
The inclusion $m_N\in\cS$ follows from Lemma~\ref{lem:2.4}, since the functions $s_2(x,\lambda)$ and $c_2(x,\lambda)$ are positive for $\lambda<0$ and the function $ m_N(\lambda)$ admits a holomorphic nonnegative continuation on $\dR_-$.

Let $a=\inf\supp R_2$ and $a_1=\inf(\supp R_1\cap(a,b))$. Then by Assumption~\ref{assum:surj} $a_1<b$ and due to~\eqref{eq:IntSys} and Lemma~\ref{lem:2.4} (iii)
\begin{equation*}
	c_1(x,\lambda)\equiv 1\ \text{for}\ x\le a_1
	\quad \text{and} \quad
	\lim_{\lambda\to-\infty} c_1(x,\lambda)=\infty\ \text{for}\ x\ge a_1.
\end{equation*}
Now we must consider two cases. In case if $c_1(\cdot,\lambda)$ has a jump at point $a_1$, which is only possible if $a_1>a$, we get
\begin{equation}
	\frac{1}{c_1(x,\lambda) c_{1+}(x,\lambda)} \to \chi_{[0,a_1)}(x)
	\quad \text{as} \quad \lambda \to -\infty
\end{equation}
and hence by the Lebesgue bounded convergence theorem one obtains from~\eqref{eq:c1s1}
\begin{equation}\label{eq:c1s1_Lim}
	\lim_{\lambda\to-\infty} \frac{s_1(x,\lambda)}{c_1(x,\lambda)} =
	\int_0^x \frac{1}{c_1(x,\lambda) c_{1+}(x,\lambda)}\, dR_1 =
	\int_{[0,a_1)}\,dR_1 = R_1(a_1) = R_{1+}(a).
\end{equation}
The last equality in~\eqref{eq:c1s1_Lim} follows from $a_1>a$ and the definition of the points $a$, $a_1$.

In case if $c_1(\cdot,\lambda)$ has no jump at point the $a_1$, which is possible either if $a_1=a$ or $a_1>a$ and $R_1$ has no jump at $a_1$,
we get
\begin{equation}
	\frac{1}{c_1(x,\lambda) c_{1+}(x,\lambda)} \to \chi_{[0,a_1]}(x)
	\quad \text{as} \quad \lambda \to -\infty
\end{equation}
and similarly to~\eqref{eq:c1s1_Lim}
\begin{equation}
	\lim_{\lambda\to-\infty} \frac{s_1(x,\lambda)}{c_1(x,\lambda)} =
	R_{1+}(a_1) = R_{1+}(a).
\end{equation}

Since $R_1(b)+R_2(b)=+\infty$ it follows from \eqref{eq:phi_ineq} that $\lim_{x\to b} c_1(x,\lambda) c_2(x,\lambda) = +\infty$ for all $\lambda\in\dR_-$ and hence it follows from~\eqref{eq:cs12Id} that
\begin{equation*}
	q(\lambda)=\lim_{x\to b}\frac{s_1(x,\lambda)}{c_1(x,\lambda)} =
	\lim_{x\to b} \frac{s_2(x,\lambda)}{c_2(x,\lambda)} = m_N(\lambda),
	\quad \lambda\in\dR_-.
\end{equation*}
Since $q$ and $m_N$ are holomorphic on $\dC\setminus\dR_+$ this proves that $q(\lambda)\equiv m_N(\lambda)$, and (iv) is shown.

\textbf{4.} Now we prove ${\textrm (v)}$.
It follows from~\eqref{eq:IntSys} that
\begin{equation}\label{eq:s2}
  s_2(x,\lambda)=1-\lambda\int_0^x s_1(x,\lambda)\,dR_2(t), \quad
  c_2(x,\lambda)=-\lambda\int_0^x c_1(x,\lambda)\,dR_2(t)
\end{equation}
and by~\eqref{eq:WFA_N2} that
\begin{equation}
  m_N(\lambda)=\frac{1-\lambda\int_0^b s_1(x,\lambda)\,dR_2(t)}{-\lambda\int_0^b c_1(x,\lambda)\,dR_2(t)},\quad \lambda\in\dC\setminus\dR.
\end{equation}
Moreover, for $\lambda<0$ the functions $s_1(x,\lambda)$ and $c_1(x,\lambda)$ are positive and increasing on $(0,b)$ and $c_2(0,\lambda)=1$, hence
\begin{equation}
	\int_0^b c_1(x,\lambda)\,dR_2(t)>R_2(b), \quad\int_0^b s_1(x,\lambda)\,dR_2(t)>0.
\end{equation}
Since $c_1(x,\lambda)\to c_1(x,0)\equiv 1$ and $s_1(x,\lambda)\to s_1(x,0)=R_1(x)$ as $\lambda\to 0-$ and these convergences are monotone and uniform on $[0,b]$ one finds that
\begin{equation}
	\int_0^b c_1(t,\lambda)\,dR_2(t)\to R_2(b),\quad \int_0^b s_1(x,\lambda)\,dR_2(t)\to \int_0^b R_1(t)\,dR_2(t), \quad\text{as}\quad \lambda\to 0-.
\end{equation}
Therefore,
\begin{equation}
 \lambda m_N(\lambda)\to -\frac{1}{R_2(b)},\quad \text{as}\quad\lambda\to 0-
\end{equation}
and thus $m_N(\lambda)$ admits the representation~\eqref{eq:Pol_m}.
\end{proof}

\subsection{Integral systems in the regular case}
\label{subsec:q-r}

Assume that the system $S[R_1,R_2]$
is regular at~$b$, i.e. $R_1(b)+R_2(b)<\infty$.
Then for every tuple $(u_1,u_2,f) \in \cT$ it follows from~\eqref{eq:f2d} that the function $u_2$ is bounded and hence the limit
\begin{equation} \label{eq:f1d}
	u_1(b) = u_1(0) + \int_0^b u_2\,dR_1
\end{equation}
exists and well defined.
Therefore, the limits~\eqref{eq:lim_1} exist.

\begin{definition}\label{def:5.1D}{\textrm (see~\cite{Ben89,Kost13})}
The function $m(\lambda,b,0)$ for which the solution
\begin{equation} \label{eTWeylND}
\psi^{ND}(t,\lambda)=s(t,\lambda)-m(\lambda,b,0){c}(t,\lambda), \qquad t
\in I,
\end{equation}
 satisfies the condition
\begin{equation}\label{eq:m_functLCD}
\psi^{ND}_1(b,\lambda)=0
\end{equation}
is called the \emph{Neumann $m$-function} of the system $S[R_1,R_2]$
on $I$ subject to the boundary condition~\eqref{eq:m_functLCD}.
\end{definition}
It follows from~\eqref{eTWeyl} and the condition $\psi^{ND}_1(b,\lambda)=0$ that
$s_1(b,\lambda)-m(\lambda,b,0)c_1(b,\lambda)=0$ which yields the formula
\begin{equation}\label{eq:WFA_N1}
  m(\lambda,b,0)=
 \frac{s_1(b,\lambda)}{c_1(b,\lambda)}
\end{equation}
and hence the Neumann $m$-function $ m(\lambda,b,0)$ coincides with the principal Titchmarsh-Weyl coefficient $q(\lambda)$ of the system $S[R_1,R_2]$.

Let $T_D$ be a symmetric extension of the linear relation $T_{\min}$ defined by
\begin{equation}\label{eq:A_D}
	T_{D} = \left\{ \bm{u} = \begin{bmatrix} \pi u_1 \\ \pi f \end{bmatrix} \colon (u_1,u_2,f) \in \cT,\ u_1(0) = u_2(0) = u_1(b) = 0 \right\}.
\end{equation}
As follows from~\eqref{eq:G2} the adjoint linear relation $T_D^*$ is of the form
 \begin{equation}
 	T_D^* = \left\{\bm{u} = \begin{bmatrix} \pi u_1 \\ \pi f \end{bmatrix} \colon (u_1,u_2,f) \in \cT:\, u_1(b)=0 \right\}.
 \end{equation}

\begin{proposition}[cf. \cite{Str18}] \label{prop:BT_IS_D}
	Let the system $S[R_1,R_2]$ be regular at $b$, and let $T_D$ be defined by~\eqref{eq:A_D}.
	Then:
\begin{enumerate}
\item[(i)]  $T_D$ is a symmetric nonnegative linear relation in
$L^2(R_2)$ with deficiency indices $(1,1)$ and $u_2\in\ L^2(R_1)$ for all $\bm{u} = \begin{bmatrix} \pi u_1 \\ \pi f \end{bmatrix}\in T_D^*$;
    \item[(ii)]  the triple $\Pi^{ND} = (\dC,\Gamma_0^{ND},\Gamma_1^{ND})$, where
\begin{equation}\label{eq:GammaA_ND}
  \Gamma_0^{ND} \bm{u} = u_2(0), \quad \Gamma_1^{ND} \bm{u} = -u_1(0), \quad \bm{u} \in T_D^*,
\end{equation}
is a boundary triple for $T_D^*$.
\item[(iii)] The Weyl function $m_{ND}(\lambda)$ of $T_D$ corresponding to the boundary triple $\Pi^{ND}$ coincides with $m(\lambda,b,0)$.
\item[(iv)] The Weyl function $m_{ND}$ of $T_D$ belongs to the Stieltjes class $\cS$ and coincides with the principal Titchmarsh-Weyl coefficient $q(\lambda)$ of the system $S[R_1,R_2]$.
\end{enumerate}
\end{proposition}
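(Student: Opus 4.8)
The plan is to run the argument of Proposition~\ref{prop:BT_IS_N} with the Neumann condition $\psi_2^N(b,\lambda)=0$ replaced by the Dirichlet-type condition $u_1(b)=0$, exploiting the fact that in the regular case everything happens \emph{at} $b$ without any limiting subtleties. First I would record that regularity forces the limit circle case: $R_2(b)<\infty$ gives $1\in\cL^2(R_2)$, while $R_1$ is bounded on $[0,b)$ so $\int_0^b R_1^2\,dR_2\le R_1(b)^2R_2(b)<\infty$ and $R_1\in\cL^2(R_2)$; by Proposition~\ref{prop:1} the system is limit circle and $n_\pm(T_{\min})=2$. Moreover, by \eqref{eq:f1d} and \eqref{eq:f2d} every $(u_1,u_2,f)\in\cT$ has finite limits $u_1(b),u_2(b)$, so in the regular case the generalized Wronskian is simply $[u,v]_b=u_1(b)\conj{v_2(b)}-u_2(b)\conj{v_1(b)}$, with no analogue of Lemma~\ref{lem:psi} required.

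For (i) and (ii): with $u_1(b)=v_1(b)=0$ the term $[u,\conj v]_b$ in the second Green's identity \eqref{eq:G2} vanishes, so \eqref{eq:G2} reduces to $\int_0^b(f\conj{v_1}-u_1\conj g)\,dR_2=u_2(0)\conj{v_1(0)}-u_1(0)\conj{v_2(0)}$, which is exactly the abstract Green's identity \eqref{eq:1.9} for $\Gamma_0^{ND}\bm{u}=u_2(0)$, $\Gamma_1^{ND}\bm{u}=-u_1(0)$; symmetry of $T_D$ follows at once since $[u,\conj v]_0$ also vanishes on $T_D$. Surjectivity of $\Gamma^{ND}\colon T_D^*\to\dC^2$ I would obtain as in Proposition~\ref{prop:BT_IS_N}: the defect elements $\bm{\psi}^{ND}(\cdot,\pm i)$ lie in $T_D^*$ (all solutions are in $\cL^2(R_2)$ in the limit circle case) and satisfy $\Gamma^{ND}\bm{\psi}^{ND}(\cdot,\pm i)=(1,m(\pm i,b,0))^T$; applying \eqref{eq:G2} to $\psi^{ND}(\cdot,i)$ and using $\psi_1^{ND}(b,i)=0$ gives $\Im m(i,b,0)=\int_0^b|\psi_1^{ND}(t,i)|^2\,dR_2>0$ (the element $\psi_1^{ND}(\cdot,i)$ of $L^2(R_2)$ is nonzero by Assumption~\ref{assum:surj}), so the two vectors are linearly independent and $\Gamma^{ND}$ is onto. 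Since $\ker\Gamma^{ND}=T_D$ this simultaneously yields $\dim(T_D^*/T_D)=2$, i.e. $n_\pm(T_D)=1$. Nonnegativity and $u_2\in L^2(R_1)$ come from the first Green's identity: letting $x\to b$ in \eqref{eq:G1} with $v=u$ and using $u_1(b)=0$ gives $\int_0^b f\conj{u_1}\,dR_2=\int_0^b|u_2|^2\,dR_1+u_2(0)\conj{u_1(0)}$, whose left side is finite; hence $u_2\in L^2(R_1)$ for every $\bm{u}\in T_D^*$, and when $u_2(0)=0$ the right side is $\ge0$, so both $T_D$ and $A_0:=\ker\Gamma_0^{ND}$ are nonnegative.

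For (iii): $\sN_\lambda(T_D^*)$ is spanned by $\psi_1^{ND}(\cdot,\lambda)$, and for $\bm{u}^{ND}(\cdot,\lambda)=(\psi_1^{ND}(\cdot,\lambda),\lambda\psi_1^{ND}(\cdot,\lambda))^T$ the initial conditions \eqref{eq:csK} give $\Gamma_0^{ND}\bm{u}^{ND}=\psi_2^{ND}(0,\lambda)=1$ and $\Gamma_1^{ND}\bm{u}^{ND}=-\psi_1^{ND}(0,\lambda)=m(\lambda,b,0)$, so by \eqref{def:Weyl} $m_{ND}(\lambda)=m(\lambda,b,0)$. For (iv): $m(\lambda,b,0)=s_1(b,\lambda)/c_1(b,\lambda)=q(\lambda)$ by \eqref{eq:WFA_N1} and \eqref{eq:m_TW}; since $A_0\ge0$ its spectrum lies in $[0,\infty)$, so $m_{ND}$ is holomorphic on $\dC\setminus\dR_+$, and by Lemma~\ref{lem:2.4}(iii) together with \eqref{eq:phi_ineq} (which gives $c_1(b,\lambda)\ge1$) the quotient $s_1(b,\lambda)/c_1(b,\lambda)$ is finite and nonnegative for $\lambda\in\dR_-$; being also a Weyl function it is Herglotz--Nevanlinna, whence $m_{ND}\in\cS$.

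The argument is essentially a transcription of the proof of Proposition~\ref{prop:BT_IS_N}, and no single step is genuinely hard; the only points needing care are (a) confirming that the regular case is always limit circle, so that $n_\pm(T_{\min})=2$ and all boundary values at $b$ are available, and (b) the dimension bookkeeping $\dim(T_D^*/T_D)=2$, which I would extract, as above, from the surjectivity of $\Gamma^{ND}$ rather than from a separate count. In particular, unlike the singular limit circle case, the Evans--Everitt condition of Lemma~\ref{lem:psi} plays no role here.
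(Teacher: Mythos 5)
Your proposal is correct and follows essentially the same route as the paper: the same boundary maps, the same reduction of the second Green's identity to the form \eqref{eq:GreenN} using $u_1(b)=v_1(b)=0$, the same computation of the Weyl function via $\psi^{ND}(\cdot,\lambda)$ and \eqref{eq:WFA_N1}, and the same appeal to Lemma~\ref{lem:2.4} for the Stieltjes property. The only divergences are that you make explicit what the paper leaves terse — surjectivity of $\Gamma^{ND}$ via the defect elements at $\pm i$ and the defect-index count $\dim(T_D^*/T_D)=2$ — and that you obtain $[u,\conj{v}]_b=0$, nonnegativity and $u_2\in L^2(R_1)$ directly from the existence of finite boundary values at $b$ in the regular case (letting $x\to b$ in \eqref{eq:G1}), rather than through the identity \eqref{eq:Wron_IdD} and the paper's citation of Lemma~\ref{lem:psi}; this is a legitimate, if anything cleaner, handling of those sub-steps, since in the regular setting no Evans--Everitt-type argument is needed.
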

\begin{proof}
\textbf{1.} To show ${\textrm (i)}$ and ${\textrm (ii)}$,
let the tuples $(u_1,u_2,f)$ and $(v_1,v_2,g)$ satisfy the system \eqref{eq:sys_lin_rel}
and assume that $u_1(b)=v_1(b)=0$, i.e. $\bm{u}, \bm{v} \in T_D^*$.
Let $\mu\in\dR$. By the Liouville-Ostrogradskii formula~\eqref{eq:Liouv} at least one of the values $c_1(b,\mu)$ and $s_1(b,\mu)$ is not equal to 0. Assume that $c_1(b,\mu)\ne 0$  and let us set $c(x) \coloneqq c(x,\mu)$.
  Due to the identity
\begin{equation}\label{eq:Wron_IdD}
  [u,v]_b = c_1(b,\mu)^{-1}\left\{[u(b),c(b,\mu)]\conj{v_1(b)} - u_1(b)[\conj{v(b)},c_(b,\mu)]\right\}
\end{equation}
the Green's identity \eqref{eq:G2} is of the form~\eqref{eq:GreenN}.
By Definition~\ref{def:btriple} the boundary triple for $T_D^*$ can be taken as $\Pi^{ND} = (\dC,\Gamma_0^{ND},\Gamma_1^{ND})$, with
$\Gamma_0^{ND},\Gamma_1^{ND}$ given in \eqref{eq:GammaA_ND}.

It follows from the the first Green's identity \eqref{eq:G1} and Lemma~\ref{lem:psi} that for every $\bm{u} \in T_D$ the identity~\eqref{eq:Green1_} holds and thus the linear relation $T_D$ is nonnegative.

\textbf{2.} Now ${\textrm (iii)}$ is shown.
The defect subspace $\sN_\lambda(T_D)$ is spanned by the function $\psi_1^{ND}(\cdot,\lambda)$ determined by~\eqref{eTWeylND}.
Denote $\bm{u}^{ND}(t,\lambda)=(\psi_1^{ND}(\cdot,\lambda),\lambda\psi_1^{ND}(\cdot,\lambda))^T\in\wh \sN_\lambda(T_{D}^*)$.
Using the formulae~\eqref{eTWeyl} and~\eqref{eq:csK} one obtains
\begin{equation*}
	\Gamma_1^{ND}\bm{u}^{ND}(\cdot,\lambda) = -\psi_1^{ND}(0,\lambda) = m(\lambda,b,0), \quad
	\Gamma_0^{ND}\bm{u}^{ND}(\cdot,\lambda) = \psi^{ND}_2(0,\lambda) = 1
\end{equation*}

and hence the Weyl function $m_{ND}(\lambda)$ is of the form
\begin{equation}
	m_{ND}(\lambda) =
	\frac{\Gamma_1^{ND} \bm{u}^{ND}(\cdot,\lambda)}{\Gamma_0^{ND} \bm{u}^{ND}(\cdot,\lambda)} =
	m(\lambda,b,0).
\end{equation}
Therefore, the Weyl function $m_{ND}(\lambda)$ coincides with
the Neumann $m$-function $m(\lambda,b,0)$.

\textbf{3.} Finally we prove ${\textrm (iv)}$.
The inclusion $m_{ND}\in\cS$ follows from Lemma~\ref{lem:2.4}. The equality $m_{ND}(\lambda)\equiv q(\lambda)$, $\lambda\in\dC\setminus\dR$, is implied by~\eqref{eq:WFA_N1}.
\end{proof}
\begin{remark} \label{rem:Param}
The functions $R_1$ and $R_2$ are not uniquely defined by the principal Titchmarsh-Weyl coefficient of the system $S[R_1,R_2]$.
As was shown in \cite[Lemma 2.12]{Kost13}
if functions $\wt R_1(\xi)$ and $\wt R_2(\xi)$ are connected by
\begin{equation*}
	\wt R_1(\xi) = R_1(x(\xi)), \quad \wt R_2(\xi) = R_2(x(\xi)), \quad \xi\in[0,\beta].
\end{equation*}

where $x(\xi)$ is an increasing function on the interval $[0,\beta]$, such that $x(0)=0$ and $x(\beta)=b$, then the principal Titchmarsh-Weyl coefficient $\wt q$ of the system
\begin{equation}\label{eq:IntSysW}
	\wt u(\xi,\lambda) = \wt u(0,\lambda) -J \int_0^\xi
	\begin{bmatrix} \lambda d\wt R_2(\tau) & 0   \\  0 & d\wt R_1(\tau)\end{bmatrix} \wt u(\tau,\lambda),\quad \xi\in[0,\beta].
\end{equation}
 coincides with the principal Titchmarsh-Weyl coefficient $q$ of the system $S[R_1,R_2]$.

Therefore we can always assume that for regular systems $S[R_1,R_2]$
the parameter $x$ ranges over a finite interval $[0,b]$, $b<\infty$.
\end{remark}

\begin{remark}\label{rem:MP}
As is known, see~\cite[Section A13]{KacK68}, a truncated moment problem can be reduced to a regular integral system $S[R_1,R_2]$
 with
\begin{gather*}
	R_1(x) = x, \quad R_2(x) = \sum_{j=0}^{n-1} m_j H (x-x_j), 	\quad x\in[0,x_n],\\
	x_j=\sum_{j=1}^j l_i, \quad m_{j-1}, l_j>0, \quad 1\le j\le n.
\end{gather*}
where $H(x)$ is the Heaviside function.
 The corresponding monodromy matrix $U(x_n,\lambda)$ is of the form
\begin{equation}
	U(x_n,\lambda) =\prod_{j=1}^n U_{x_{j-1}}(x_j,\lambda),\quad \text{where}\quad
	U_{x_{j-1}}(x_j,\lambda) =
    \begin{bmatrix}
		1 - \lambda l_j m_{j-1} & l_j \\
		-\lambda  m_{j-1}     & 1
	\end{bmatrix}.
\end{equation}
The system $S[R_1,R_2]$
satisfies Assumption~\ref{assum:surj} if $n>1$.
If $n=1$ then $R_2(x)=H(x)$, $x\in[0,l_1]$, $L^2(R_2)=\dC$, the system $S[R_1,R_2]$ is of the form
\begin{equation}\label{eq:1_dim}
	u_1(x) = u_1(0) + x u_2(x), \quad
	u_2(x) = u_2(0) - \lambda u_1(0) m_0, \quad
	x\in(0,l_1]
\end{equation}
and does not satisfy the Assumption~\ref{assum:surj}. However, in this case one can still introduce a boundary triple $(\dC,\Gamma_0,\Gamma_1)$ for $T_{\max}=\dC\times\dC$ by
\begin{equation}\label{eq:BT_1dim}
	\Gamma_0\bm{u}=u_1(0),\quad \Gamma_1\bm{u}=f(0), \quad
	\bm{u} = \begin{bmatrix} u_1 \\ f \end{bmatrix} \in T_{\max}
\end{equation}
and the corresponding Weyl function is $m(\lambda)=m_0\lambda$.

The system $S[\wt R_1,\wt R_2]$ with $\wt R_1(x)=l_1H(x-1)$, $\wt R_2(x)=m_0 H(x)$, $x\in[0,2]$ is equivalent to the system $S[R_1,R_2]$
in the sense that its Weyl function corresponding to the boundary triple~\eqref{eq:BT_1dim} coincides with $m(\lambda)=m_0\lambda$ and
the monodromy matrix $\wt U(2,\lambda)$ of this system coincides with $U(l_1,\lambda)$. The advantage of system $S[\wt R_1,\wt R_2]$ is that the elementary factors of $\wt U(2,\lambda)$ from its factorization
\begin{equation*}
	\wt U(2,\lambda) = U^{(1)}(\lambda)U^{(0)}(\lambda), \quad
	U^{(1)}(\lambda) =
	\begin{pmatrix}
		1 & l_1 \\
		0 & 1
	\end{pmatrix}, \quad
	U^{(0)}(\lambda) =
	\begin{pmatrix}
    	1 & 0 \\
    	-\lambda m_0 & 1
	\end{pmatrix}
\end{equation*}
can be also treated as monodromy matrices of systems $S[0,\wt R_2]$ on the interval $[0,1]$ and $S[\wt R_1,0]$ on $[1,2]$, respectively.
\end{remark}

\section{Integral systems in the limit point case}
\subsection{The strong limit point condition}
The next lemma is an analog of one result in~\cite[Lemma]{Ever76} in the case of integral systems.

\begin{lemma} \label{lem:log}
	Let $f$ be a (not necessarily strictly) monotone function on $[b_0,b)$ such that either $f(x) \to \pm\infty$ or $f(x) \to 0$ as $x\to b$ and let $f(x)\neq0$ on $[b_0,b)$. Then
	\begin{equation} \label{eq:intdf_f}
		\lim_{x\to b} \int_{b_0}^x df/f = \pm \infty.
	\end{equation}
\end{lemma}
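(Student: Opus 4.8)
The plan is to reduce the claim to the elementary fact that $\int df/f$ is, up to sign, a telescoping sum of logarithmic increments of $|f|$, and that $\log|f|$ tends to $\pm\infty$ under each of the two hypotheses. Since $f$ is monotone on $[b_0,b)$ and nowhere zero, it has constant sign there, so we may assume $f>0$ (otherwise replace $f$ by $-f$, which changes neither $df/f$ nor the validity of the hypotheses). The function $f$ is of bounded variation on every $[b_0,b')$ with $b'<b$, hence left-continuous after the usual normalization, and $\log f$ is also monotone and of bounded variation on each such subinterval. The heart of the matter is the inequality relating the Lebesgue--Stieltjes integral $\int_{b_0}^x df/f$ to $\log f(x)-\log f(b_0)$.

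First I would establish, for monotone positive $f$, the two-sided estimate
\begin{equation*}
	\log\frac{f(x)}{f(b_0)} \;\le\; \int_{b_0}^x \frac{df}{f} \quad\text{if $f$ is nondecreasing},
	\qquad
	\int_{b_0}^x \frac{df}{f} \;\le\; \log\frac{f(x)}{f(b_0)} \quad\text{if $f$ is nonincreasing},
\end{equation*}
together with the matching reverse bound involving $f_+$ in the denominator. The clean way to see this is via the integration-by-parts / quotient-rule machinery already set up in the paper: applying~\eqref{eq:in-by-parts} to $u=f$ and $v=\log f$ on an interval where $f$ ranges in $[\alpha,\beta]$, on each such interval one has $d(\log f)$ squeezed between $df/\beta$ and $df/\alpha$; summing over a partition and using monotonicity of $f$ yields that $\int_{b_0}^x df/f$ and $\log f(x)-\log f(b_0)$ differ by a quantity controlled by $\sum |df(\{x_j\})|/f(x_j)$ over the jumps, which is in turn dominated by the total variation of $\log f$. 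In fact it is cleanest to prove directly that $\big|\int_{b_0}^x df/f - (\log f(x)-\log f(b_0))\big|$ is bounded, or even that $\int_{b_0}^x df/f$ and $\log f(x) - \log f(b_0)$ have the same limiting behaviour as $x\to b$: the continuous part of $df$ contributes exactly $\int d(\log f)$ over the continuous part, and each jump of size $f_+(x_j)-f(x_j)$ contributes $(f_+(x_j)-f(x_j))/f(x_j)$ to the integral versus $\log f_+(x_j)-\log f(x_j)$ to the log; since $t-1$ and $\log t$ are comparable for $t$ in a bounded interval bounded away from $0$ — which holds when $\sup f<\infty$, i.e. in the case $f\to 0$ or $f$ bounded — the discrepancy is summable, while in the case $f\to+\infty$ both sides diverge to $+\infty$ monotonically anyway.

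With this comparison in hand the conclusion is immediate. If $f(x)\to+\infty$ then $\log f(x)\to+\infty$, $f$ is eventually nondecreasing in the relevant sense, and $\int_{b_0}^x df/f \ge \log(f(x)/f(b_0)) \to +\infty$. If $f(x)\to 0$, then $f$ is eventually nonincreasing toward $0$, so $\sup f<\infty$, the denominators $f(x_j)$ are bounded away from $0$ on any $[b_0,b')$... — more carefully, one uses that on $[b_0,x]$ the values lie in $[f(x),f(b_0)]$, so the jump discrepancies are summable on each $[b_0,b')$ — and $\int_{b_0}^x df/f \le \log(f(x)/f(b_0)) \to -\infty$. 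Either way~\eqref{eq:intdf_f} follows with the appropriate sign, and the overall sign matches $\sgn f$ times the direction of the limit, recovering the stated $\pm\infty$.

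The main obstacle I anticipate is bookkeeping the jump contributions of the Lebesgue--Stieltjes measure $df$: one must be careful that $\int_{b_0}^x df/f$ — with the paper's convention $\int_{b_0}^x = \int_{[b_0,x)}$ and the left-continuous representative in the denominator — really is comparable to $\log f(x)-\log f(b_0)$, since a priori a single large jump could behave differently for $(t-1)$ and $\log t$. This is handled by noting that on any $[b_0,b')\subset[b_0,b)$ the function $f$ takes values in a compact interval bounded away from $0$ (because $f$ is monotone and nonzero with a finite or zero limit), so on that subinterval the two quantities differ by a bounded amount with bounded total variation; then one takes $b'\to b$ and uses that the dominant term $\log f(x)-\log f(b_0)$ already diverges to $\pm\infty$, which swamps any bounded-on-compacta discrepancy once one checks the discrepancy does not itself blow up — and it does not, precisely because in the divergent-denominator case the jump terms are even smaller in the $\log$ than in the $(t-1)$ count, and in the $f\to 0$ case the denominators stay $\ge f(x)$ which is bounded below on $[b_0,b')$.
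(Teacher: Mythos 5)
Your treatment of the case $f\to\pm\infty$ is fine (there the correct inequality $\int_{b_0}^x df/f \ge \log\bigl(f(x)/f(b_0)\bigr)$ for positive nondecreasing $f$ does the job), but the decisive case $f\to 0$ --- the one actually needed in the proof of Theorem~\ref{thm:SLP} --- contains a genuine error. With the left-continuous representative in the denominator, which is what you say you use, a downward jump with ratio $t=f_+(x_n)/f_-(x_n)\in(0,1)$ contributes $t-1$ to the integral but $\log t$ to the logarithm, and since $\log t\le t-1$ the true inequality for nonincreasing positive $f$ is $\int_{b_0}^x df/f \ \ge\ \log\bigl(f(x)/f(b_0)\bigr)$, i.e.\ the \emph{opposite} of the one you assert (the inequality you want holds only with $f_+$ in the denominator, which is not the integral of the lemma); divergence of $\log f$ then gives you nothing. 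Your fallback claim that the discrepancy between $\int_{b_0}^x df/f$ and $\log f(x)-\log f(b_0)$ is bounded, or summable, is also false: a single jump from $c$ down to $\varepsilon$ contributes at least $-1$ to the integral but $\log(\varepsilon/c)\to-\infty$ to the logarithm, and boundedness of $f$ does not keep the jump ratios $t$ away from $0$; even when the ratios are tame --- say $f$ is successively halved at infinitely many points accumulating at $b$ --- each jump produces the fixed positive discrepancy $\log 2-\tfrac12$, so the total discrepancy diverges. Comparability of $t-1$ and $\log t$ on compact subintervals $[b_0,b')$, where $f$ is bounded below, does not survive the limit $b'\to b$, which is exactly where the statement lives.

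What is missing is the dichotomy the paper uses: split $\int df/f$ into its continuous and jump parts, both of which are negative when $f\downarrow 0$. If the jump part $\sum\bigl(f_+(x_n)/f_-(x_n)-1\bigr)$ diverges, the lemma follows at once. If it converges, then $a_n:=1-f_+(x_n)/f_-(x_n)\to 0$, hence $\sum\log\bigl(f_-(x_n)/f_+(x_n)\bigr)\le\sum a_n/(1-a_n)<\infty$, and the Radon--Nikodym identity $\log\bigl(f_-(x)/f_+(b_0)\bigr)=\int_{[b_0,x)\setminus D_f}df/f-\sum_{x_n<x}\log\bigl(f_-(x_n)/f_+(x_n)\bigr)$ then forces the continuous part of the integral to diverge to $-\infty$. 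Without this case distinction (or an equivalent device) your comparison argument does not close in the $f\to 0$ case, even though it is adequate for $f\to\pm\infty$.
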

\begin{proof}
	We will prove the lemma in the case $f>0$, $f\to 0$. The proof in the other cases is similar.
	Let $D_f $ be the set of the points of discontinuity of $f$ on $[b_0,b)$.
	One can write
	\begin{equation} \label{eq:log3.1}
		\int\limits_{[b_0,x)} \frac{df}{f} = \int\limits_{[b_0,x)\setminus D_f} \frac{df}{f} + \int\limits_{[b_0,x)\cap D_f} \frac{df}{f}.
	\end{equation}
	Notice that both the integrals on the right hand side of~\eqref{eq:log3.1} are negative, therefore if one of them diverges (as $x\to b$) then the assertion of the lemma holds.

	Let $D_f = \{x_n\}_{n=0}^{\infty}$. Consider the following inequality
	\begin{equation} \label{eq:log3.3}
		\frac{f_+(x_n) - f_-(x_n)}{f(x_n)} \le \frac{f_+(x_n) - f_-(x_n)}{f_-(x_n)} = \frac{f_+(x_n)}{f_-(x_n)} - 1 < 0
	\end{equation}
	and the associated series
	\begin{equation} \label{eq:log3.6}
		\sum_{n=0}^{\infty} \left( \frac{f_+(x_n)}{f_-(x_n)} - 1 \right).
	\end{equation}
	If series~\eqref{eq:log3.6} diverges then the following integral
	\begin{equation} \label{eq:log3.4}
		\int\limits_{[b_0,b)\cap D_f} \frac{df}{f} = \sum_{x_n\in D_f} \frac{f_+(x_n) - f_-(x_n)}{f(x_n)}
	\end{equation}
	diverges as well, so the assertion of the lemma holds immediately.

	Assume now that series~\eqref{eq:log3.6} converges and denote $a_n \coloneqq 1 - f_+(x_n)/f_-(x_n)$. Notice that the measure $d\log(f)$ is absolutely continuous with respect to $df$ and therefore there exists the Radon-Nikodym derivative $d\log(f)/df \in L^1(df)$ which has a representative (see \cite[5.3, formula (3.5)]{BerUsShe})
	\begin{equation} \label{eq:log3.radon-nikodym}
		\frac{d\log(f)}{df} = \left\{
		\begin{array}{ll}
			1/f(x), & x \in [b_0,b)\setminus D_f,\\
			(\log f_+(x) - \log f_-(x))/(f_+(x)-f_-(x)), & x \in D_f.
		\end{array} \right.
	\end{equation}
	Now we get by the Radon-Nikodym theorem
	\begin{equation}
		\log \frac{f_-(x)}{f_+(b_0)} = \int\limits_{[b_0,x)} \frac{d\log(f)}{df}\,df = \int\limits_{[b_0,x)\setminus D_f} \frac{df}{f} + \int\limits_{D_f} \frac{\log f_+(x) - \log f_-(x)}{f_+(x)-f_-(x)}\,df
	\end{equation}
	and hence
	\begin{equation} \label{eq:log3.5}
		\int\limits_{[b_0,x)\setminus D_f} \frac{df}{f} = \log \frac{f_-(x)}{f_+(b_0)} + \sum_{x_n\in D_f} \log \frac{f_-(x_n)}{f_+(x_n)}.
	\end{equation}
	One can see from the following inequality
	\begin{equation}
		0 < \log \frac{f_-(x_n)}{f_+(x_n)} \le \frac{f_-(x_n) - f_+(x_n)}{f_+(x_n)} = \frac{a_n}{1-a_n}
	\end{equation}
	that the series
	\begin{equation}
		\sum_{n=1}^{\infty} \log \frac{f_-(x_n)}{f_+(x_n)}
	\end{equation}
	converges provided the series $\sum_{1}^{\infty} a_n$ converges. Hence we obtain that the integral on the left hand side of~\eqref{eq:log3.5} diverges which completes the proof.
\end{proof}

\begin{definition}[\cite{Ever66,Ever73,Ever76}] \label{def:SLP_D}
	Let the system $S[R_1,R_2]$ be singular at $b$. It is said to be in the \emph{strong limit point} case if
		\begin{equation} \label{eq:def_SLP}
			\lim_{x\to b} u_1(x)v_2(x) = 0\quad\text{for any }\quad
			(u_1,u_2, f),\ (v_1,v_2, g) \in \cT;
		\end{equation}
		and it is said to have \emph{the Dirichlet property} if
		\begin{equation} \label{eq:def_D}
			\int_0^b |u_2(t)|^2dR_1(t) <\infty\quad\text{ for any } \quad (u_1,u_2, f)\in\cT.
		\end{equation}
\end{definition}

\begin{theorem} \label{thm:SLP}
	Let the system $S[R_1,R_2]$ be singular at $b$. Then the following statements are equivalent:
	\begin{enumerate}
		\item[(LP)]   The system $S[R_1,R_2]$ is in the limit point case.
		\item[(D)]    The system $S[R_1,R_2]$ has the Dirichlet property.
		\item[(SLP$^*$)] For any $(u_1,u_2, f) \in \cT$ the following equality holds
		\begin{equation} \label{eq:def_SLP_var}
			\lim_{x\to b} u_1(x)u_2(x) = 0.
		\end{equation}
		\item[(SLP)]  The system $S[R_1,R_2]$ is in the strong limit point case.
	\end{enumerate}
\end{theorem}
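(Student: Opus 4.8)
The plan is to prove the cycle of implications
\[
(\mathrm{LP})\ \Longrightarrow\ (\mathrm{D})\ \Longrightarrow\ (\mathrm{SLP})\ \Longrightarrow\ (\mathrm{SLP}^*)\ \Longrightarrow\ (\mathrm{LP}).
\]
Here $(\mathrm{SLP})\Rightarrow(\mathrm{SLP}^*)$ is immediate on putting $(v_1,v_2,g)=(u_1,u_2,f)$ in \eqref{eq:def_SLP} and using $|u_1u_2|=|u_1\conj{u_2}|$. For $(\mathrm{SLP}^*)\Rightarrow(\mathrm{LP})$ I argue by contraposition: if the system is limit circle then $R_1\in\cL^2(R_2)$ by Proposition~\ref{prop:1}, so the solution $s(\cdot,0)=(R_1,1)^{T}$ obtained by setting $\lambda=0$ in \eqref{eq:IntSys} yields a triple $(R_1,1,0)\in\cT$ with $u_1(x)u_2(x)=R_1(x)\to R_1(b)>0$ (the limit is positive since $R_1$ is not $dR_2$-negligible, by Assumption~\ref{assum:surj}); hence $(\mathrm{SLP}^*)$ fails. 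As a warm-up one also records that $(\mathrm{SLP}^*)\Rightarrow(\mathrm{D})$ follows from the first Green's identity \eqref{eq:G1} with $v=u$, namely $\int_0^x|u_2|^2\,dR_1=\int_0^x f\conj{u_1}\,dR_2+u_2(x)\conj{u_1(x)}-u_2(0)\conj{u_1(0)}$, in which the integral converges by Cauchy--Schwarz and $u_2(x)\conj{u_1(x)}\to0$ by $(\mathrm{SLP}^*)$. The two substantial arrows, $(\mathrm{D})\Rightarrow(\mathrm{SLP})$ and $(\mathrm{LP})\Rightarrow(\mathrm{D})$, rest on the following device.

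Fix triples $(u_1,u_2,f),(v_1,v_2,g)\in\cT$. Using $du_1=u_2\,dR_1$, $d\conj{v_2}=-\conj{g}\,dR_2$, the integration-by-parts formula \eqref{eq:in-by-parts} and Assumption~\ref{assum:R1R2}, one obtains $d(u_1\conj{v_2})=-u_1\conj{g}\,dR_2+u_2\conj{v_2}\,dR_1$; the $dR_2$-term is integrable over $[0,b)$ by Cauchy--Schwarz, and the $dR_1$-term is integrable as soon as $u_2,v_2\in\cL^2(R_1)$. Under that hypothesis $u_1(x)\conj{v_2(x)}$ tends to a finite limit $\ell$ as $x\to b$, and the crux is that $\ell=0$. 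Indeed, if $\ell\ne0$ then $u_1,v_2$ are nonzero near $b$ and $|u_1|^2|v_2|^2\to|\ell|^2$, so $\int^{b}dR_2/|v_2|^2<\infty$ (from $u_1\in\cL^2(R_2)$) and $\int^{b}dR_1/|u_1|^2<\infty$ (from $v_2\in\cL^2(R_1)$); combining this with $du_1=u_2\,dR_1$, $dv_2=-g\,dR_2$ and Cauchy--Schwarz gives $\int^{b}|du_1|/|u_1|<\infty$ and $\int^{b}|dv_2|/|v_2|<\infty$. By the estimate used in the proof of Lemma~\ref{lem:log}, which controls the jumps of $\log|u_1|$ by those of $|u_1|$ up to an absolutely convergent series, $\log|u_1|$ and $\log|v_2|$ have finite total variation near $b$; hence $|u_1(x)|\to\ell_1\in(0,\infty)$ and $|v_2(x)|\to\ell_2\in(0,\infty)$. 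But then $\int^{b}|u_1|^2\,dR_2<\infty$ forces $R_2(b)<\infty$ and $\int^{b}|v_2|^2\,dR_1<\infty$ forces $R_1(b)<\infty$, i.e.\ the system is regular at $b$ --- contradicting the standing hypothesis that $b$ is singular. Hence $\ell=0$, which for every pair of triples is exactly $(\mathrm{D})\Rightarrow(\mathrm{SLP})$.

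It remains to prove $(\mathrm{LP})\Rightarrow(\mathrm{D})$, the main obstacle. Suppose, for a contradiction, that some $(u_1,u_2,f)\in\cT$ has $\int_0^b|u_2|^2\,dR_1=\infty$. By \eqref{eq:G1} with $v=u$ one has $u_2(x)\conj{u_1(x)}=\int_0^x|u_2|^2\,dR_1-\int_0^x f\conj{u_1}\,dR_2+u_2(0)\conj{u_1(0)}$, where the first term tends to $+\infty$ and the rest to a finite limit, so $|u_1(x)u_2(x)|\to\infty$. As in the previous step $\int^{b}dR_2/|u_2|^2<\infty$ (from $u_1\in\cL^2(R_2)$ and $|u_1|^2=|u_1u_2|^2/|u_2|^2\gtrsim1/|u_2|^2$ near $b$), and then, since $f\in\cL^2(R_2)$ gives $\int^{b}|du_2|/|u_2|<\infty$, the logarithmic estimate yields $|u_2(x)|\to\ell_2\in(0,\infty)$; hence $|u_1(x)|=|u_1u_2|/|u_2|\to\infty$, so $\int_0^b|u_1|^2\,dR_2<\infty$ forces $R_2(b)<\infty$. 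Now $du_2=-f\,dR_2$ with $f\in\cL^2(R_2)$ and $R_2(b)<\infty$ makes $u_2$ of bounded variation on $[0,b)$, so $u_2(x)\to u_2(b)$ with $|u_2(b)|=\ell_2>0$; since $\int_0^b|u_2|^2\,dR_1=\infty$ while $|u_2|$ is bounded we get $R_1(b)=\infty$, and integrating $du_1=u_2\,dR_1$ gives $|u_1(x)|\asymp R_1(x)$ as $x\to b$, whence $u_1\in\cL^2(R_2)$ implies $R_1\in\cL^2(R_2)$. Together with $1\in\cL^2(R_2)$ (from $R_2(b)<\infty$), Proposition~\ref{prop:1} makes the system limit circle, contradicting $(\mathrm{LP})$. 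Thus every triple in $\cT$ has the Dirichlet property, closing the cycle. The delicate point in both substantial arrows is the implication ``$\int^{b}|du_i|/|u_i|<\infty$ $\Rightarrow$ $|u_i(x)|$ has a finite nonzero limit'': since $|u_i|$ need not be monotone and $dR_1,dR_2$ may carry atoms, this is precisely the situation that calls for (the proof of) Lemma~\ref{lem:log} rather than a naive logarithmic substitution.
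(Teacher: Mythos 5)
Your proposal is correct, but it takes a genuinely different route from the paper. The paper proves the cycle (LP)$\Rightarrow$(D)$\Rightarrow$(SLP$^*$)$\Rightarrow$(SLP)$\Rightarrow$(LP): it reduces to real-valued data, uses the first Green's identity to show that $u_1u_2$ tends to a limit $d\in\dR\cup\{+\infty\}$, excludes $d\neq 0$ by sign-preservation and monotonicity arguments together with Lemma~\ref{lem:log}, and then passes from the one-triple condition (SLP$^*$) to the two-triple condition (SLP) via Lemma~\ref{lem:psi}, the Wronskian limit-point criterion \eqref{eq:LP-W_crit} from \cite{Str19} and a polarization identity. You instead run (LP)$\Rightarrow$(D)$\Rightarrow$(SLP)$\Rightarrow$(SLP$^*$)$\Rightarrow$(LP): you obtain (D)$\Rightarrow$(SLP) in one stroke by integrating $d(u_1\conj{v_2})=u_2\conj{v_2}\,dR_1-u_1\conj{g}\,dR_2$ and showing that a nonzero limit of $u_1\conj{v_2}$ would force $R_1(b)+R_2(b)<\infty$, contradicting singularity; and you obtain (SLP$^*$)$\Rightarrow$(LP) by contraposition with the explicit triple $(R_1,1,0)\in\cT$ available in the limit circle case by Proposition~\ref{prop:1}. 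This avoids the polarization step, Lemma~\ref{lem:psi} and the external criterion \eqref{eq:LP-W_crit}, and it works directly with complex-valued solutions, whereas the paper's argument is phrased for real-valued ones; your (LP)$\Rightarrow$(D) also differs, deriving $R_2(b)<\infty$ and $R_1\in\cL^2(R_2)$ from $|u_1u_2|\to\infty$ and then contradicting (LP) via Proposition~\ref{prop:1}, instead of the paper's case analysis on whether $1\in\cL^2(R_2)$. The one item you must still write out in full is the statement you attribute to ``the estimate used in the proof of Lemma~\ref{lem:log}'': what you actually use is that $\int^b|du|/|u|<\infty$, with $u$ non-vanishing near $b$, forces $|u|$ to converge to a finite positive limit. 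Lemma~\ref{lem:log} as stated assumes monotonicity and asserts divergence, so this companion statement needs its own short proof; it is true and follows the mechanism you sketch (split $d|u|$ into its continuous part and jumps; the jump ratios $r_n$ satisfy $\sum_n|r_n-1|<\infty$, hence all but finitely many lie in $[1/2,3/2]$, where $|\log r_n|\le 2|r_n-1|$, so $\log|u|$ has bounded variation near $b$; positivity of the one-sided limits of $|u|$ follows from local boundedness of the companion factor). With that auxiliary lemma supplied, every implication in your cycle checks out.
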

\begin{proof}

Without loss of generality we assume here that the functions $u_1$, $u_2$, and $f$ are real-valued. By the first Green's identity~\eqref{eq:G1} one obtains
\begin{equation*}
	\int_0^x u_2^2\, dR_1 = \int_0^x f u_1\, dR_2 + u_1 u_2|_0^x,
\end{equation*}
and hence
\begin{equation}
	\lim_{x\to b} u_1(x) u_2(x) = d
\end{equation}
where $d \in \mathbb{R}$ if the Dirichlet property holds and $d=+\infty$ otherwise.

Let us start with the implication (LP) $\Rightarrow$ (D).
For this purpose we assume the contrary i.e. the system $S[R_1,R_2]$ is in the limit point case but $d=+\infty$.
Notice, that the functions $R_1$ and $R_2$ do not have common points of discontinuity, therefore neither do the functions $u_1$ and $u_2$.
It implies that both $u_1$ and $u_2$ preserve their signs on some interval $[b_0,b)$ (otherwise they would have to share a jump from a positive to a negative value or vice versa), so the function $u_1$ is either positive and increasing or negative and decreasing.
If $1 \notin \cL^2(R_2)$ then it immediately results as $u_1 \notin \cL^2(R_2)$.
In the case if $1 \in \cL^2(R_2)$ (and hence $R_1 \notin \cL^2(R_2)$) the implication $f \in \cL^2(R_2) \Rightarrow f \in \cL^1(R_2)$ is valid and hence (see~\eqref{eq:f2d}) there exists a finite limit $u_2(b) \coloneqq \lim_{x\to b} u_2(x)$.
The limit $u_2(b)$ must be zero, otherwise from
\begin{equation}
	|u_1(x) - u_1(b_0)| = \left| \int_{b_0}^x u_2\,dR_1 \right| \ge \frac{|u_2(b)|}{2} (R_1(x)-R_1(b_0))
\end{equation}
one gets $u_1 \notin \cL^2(R_2)$.
One can see that $1/u_2 \notin \cL^2(R_2)$.
Indeed, if $1/u_2 \in \cL^2(R_2)$ then the integral
\begin{equation}
	\int_0^x \frac{f}{u_2} dR_2 = - \int_0^x \frac{du_2}{u_2}
\end{equation}
converges as $x\to b$, which contradicts to Lemma~\ref{lem:log}.
Since $d=+\infty$, the estimate $|u_1| > 1/|u_2|$ hold on some interval $[b_0,b)$ and provides again $u_1 \notin \cL^2(R_2)$.
This completes the proof of the implication (LP) $\Rightarrow$ (D).

Now let us prove the implication (D) $\Rightarrow$ (SLP*). We first will show that $d=0$.
In the case $1 \in \cL^2(R_2)$ the reasoning of the previous paragraph can be used to show that $u_1 \notin \cL^2(R_2)$ for every non-zero $d$.
In the case $1 \notin \cL^2(R_2)$ the reasoning above shows again that $u_1 \notin \cL^2(R_2)$ for every $d>0$.
Therefore we assume $d<0$ and get that $u_1$ is either positive and decreasing or negative and increasing on some interval $[b_0,b)$, namely $u_1 \to 0$ as $x\to b$. From $|u_1 u_2| > |d|/2$ on $[b_0,b)$ (with a possible change of point $b_0$) we obtain the following inequality
\begin{equation}
	\int_{b_0}^b u_2^2\,dR_1 = \int_{b_0}^b u_2\,df_1 >
	\frac{d}{2} \int_{b_0}^b \frac{du_1}{u_1} = +\infty.
\end{equation}
The left hand side converges by our assumption but the right hand side diverges due to Lemma~\ref{lem:log}. This contradiction proves that $d=0$. Thus, implication (D) $\Rightarrow$ (SLP) is valid.

As is known (see \cite[Theorem 4.3]{Str19}), the system $S[R_1,R_2]$ is in the limit point case if and only if for every $(u_1,u_2, f)$ and $(v_1,v_2, g)$ from $\cT$
\begin{equation} \label{eq:LP-W_crit}
	\lim_{x\to b} [u,v]_x = \lim_{x\to b} (u_1(x)v_2(x) - u_2(x)v_1(x)) = 0.
\end{equation}

In order to prove the implication (SLP*) $\Rightarrow$ (SLP) we notice first that by Lemma~\ref{lem:psi} the system $S[R_1,R_2]$ cannot be in the limit circle case since~\eqref{eq:def_SLP_var} holds for every $(u_1,u_2, f) \in \cT$. The condition~\eqref{eq:def_SLP} follows from~\eqref{eq:def_SLP_var}, \eqref{eq:LP-W_crit} and the following equality (cf.~\cite{Ever76})
\begin{equation}
	2 u_1(x)v_2(x) = (u_1 + v_1)(u_2 + v_2) + [u,v]_x = 0.
\end{equation}

Assume that the statement (SLP) holds, i.e. condition~\eqref{eq:def_SLP} is satisfied for every $(u_1,u_2, f)$ and $(v_1,v_2, g)$ from $\cT$. Then, clearly, \eqref{eq:LP-W_crit} holds for every $(u_1,u_2, f)$ and $(v_1,v_2, g)$ from $\cT$ and hence the system $S[R_1,R_2]$ is in the limit point case. This proves the implication (SLP) $\Rightarrow$ (LP).
\end{proof}

\begin{remark}
In the case of absolutely continuous $R_1$ and $R_2$ the implication $(LP)\Rightarrow(SLP)$ for the system $S[R_1,R_2]$ was proved in~\cite{Kalf74},
see also~\cite{Ever76}.
\end{remark}
\subsection{Boundary triples for integral systems in the limit point case}
\begin{definition}\label{def:Neumann_m}
Let the system $S[R_1,R_2]$ be in the limit point case at $b$.
Then for each $\lambda\in\nCR$ there is a unique coefficient $m_N(\lambda)$, such that
\begin{equation} \label{eTWeyl_1}
	\psi_1(\cdot,\lambda) = s_1(\cdot,\lambda) - m_N(\lambda) c_1 (\cdot,\lambda)\in \cL^2(R_2).
\end{equation}
The function $m_N$
is called the \emph{Neumann $m$-function} of the system \eqref{eq:IntSys} on $I$
and the function $\psi(t,\lambda)$ is called the Weyl solution of the system $S[R_1,R_2]$ on $I$.
\end{definition}

Let us collect some statements concerning boundary triples for $S^*$, which were partially formulated in~\cite{Str18,Str19}.
\begin{proposition}\label{prop:BT_IS_LP}
	Let the system $S[R_1,R_2]$ be in the limit point case at $b$, and let $T=T_{min}$. Then:
	\begin{enumerate}
		\item [(i)] $T$ is a symmetric nonnegative operator in $L^2(R_2)$ with deficiency indices $(1,1)$.
		\item [(ii)] The triple $\Pi = (\dC,\Gamma_0,\Gamma_1)$, where
		\begin{equation}\label{eq:BTLP}
			\Gamma_0 \bm{u} = u_2(0), \quad \Gamma_1 \bm{u} = -u_1(0), \quad \bm{u} \in T^*,
		\end{equation}
		is a boundary triple for $T^*$.
		\item[(iii)] The defect subspace $\sN_\lambda(T)$ is spanned by the Weyl solution $\psi_1(t,\lambda)$, and the Weyl function $m(\lambda)$ of $T$ corresponding to the boundary triple $\Pi$ coincides with the Neumann $m$-function of the system $S[R_1,R_2]$ on $I$:
		\begin{equation}\label{eq:WFLP}
			m(\lambda)=- \frac{\psi_1(0,\lambda)}{\psi_2(0,\lambda)} = m_N(\lambda).
		\end{equation}
		\item[(iv)] The Weyl function $m(\lambda)$ of $T$ corresponding to the boundary triple $\Pi$ coincides with the principal Titchmarsh-Weyl coefficient $q(\lambda)$ of  the system $S[R_1,R_2]$ on $I$ and belongs to the Stieltjes class $\cS$.
		\item[(v)] If $R_2(b)<\infty$ then the Weyl function $m_N$ of $T_N$ admits the representation
		\begin{equation}\label{eq:Pol_mLP}
			m_N(\lambda) = -\frac{1}{R_2(b)\cdot\lambda} + \wt m(\lambda);
		\end{equation}
		where $\wt m$ is a function from $\cS$ such that $\lim_{y\downarrow 0} y \wt m(iy) = 0$.
	\end{enumerate}
\end{proposition}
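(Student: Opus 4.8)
The plan is to read off (i) and the operator/deficiency assertions from the description of $T_{\min}$ in \cite{Str19}, to prove nonnegativity by inserting the strong limit point condition of Theorem~\ref{thm:SLP} into the first Green's identity \eqref{eq:G1}, to verify the two axioms of Definition~\ref{def:btriple} using the second Green's identity \eqref{eq:G2} together with the limit point Wronskian criterion \eqref{eq:LP-W_crit}, to identify the Weyl function with $m_N$ and with $q$ in (iii)--(iv) by means of the Weyl solution of Definition~\ref{def:Neumann_m} and the monotonicity in Lemma~\ref{lem:2.4}, and finally to treat (v) as a residue computation for $m_N$ at $\lambda=0$. For (i): in the limit point case $T=T_{\min}$ is a closed symmetric operator with $T^*=T_{\max}$ and, since $\dim\sN_\lambda(T_{\max})=1$, deficiency indices $(1,1)$; by \eqref{eq:def-Amin} (where $[u,v]_b=0$ holds automatically in the limit point case) each $\bm u\in T_{\min}$ has $u_1(0)=u_2(0)=0$, so letting $x\to b$ in \eqref{eq:G1} and using $\lim_{x\to b}u_1(x)u_2(x)=0$ from Theorem~\ref{thm:SLP} gives $\langle f,u_1\rangle_{R_2}=\int_0^b|u_2|^2\,dR_1\ge 0$. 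For (ii): apply \eqref{eq:G2} to $(u_1,u_2,f)$ and to the conjugate tuple $(\conj{v_1},\conj{v_2},\conj g)\in\cT$ and pass to the limit $x\to b$; the boundary term at $b$ vanishes by \eqref{eq:LP-W_crit}, hence for $\bm u,\bm v\in T^*$
\[
	\langle f,v_1\rangle_{R_2}-\langle u_1,g\rangle_{R_2}=u_2(0)\conj{v_1(0)}-u_1(0)\conj{v_2(0)},
\]
which is \eqref{eq:1.9} for the maps in \eqref{eq:BTLP}; moreover $\ker\Gamma=\{\bm u\in T_{\max}:u_1(0)=u_2(0)=0\}=T_{\min}$ has codimension $n_+(T_{\min})+n_-(T_{\min})=2$ in $T^*$, so $\Gamma\colon T^*\to\dC^2$ is surjective.

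For (iii)--(iv): in the limit point case $\sN_\lambda(T_{\max})$ is spanned by the Weyl solution $\psi_1(\cdot,\lambda)$ (Lemma~\ref{lem:Weyl_disks}(iii)); since $\psi(0,\lambda)=(-m_N(\lambda),1)^T$, for $\bm u_\lambda=(\psi_1(\cdot,\lambda),\lambda\psi_1(\cdot,\lambda))^T$ one has $\Gamma_0\bm u_\lambda=\psi_2(0,\lambda)=1$ and $\Gamma_1\bm u_\lambda=-\psi_1(0,\lambda)=m_N(\lambda)$, so \eqref{def:Weyl} gives $m(\lambda)=-\psi_1(0,\lambda)/\psi_2(0,\lambda)=m_N(\lambda)$. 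The self-adjoint extension $A_N\coloneqq\ker\Gamma_0=\{\bm u\in T_{\max}:u_2(0)=0\}$ is nonnegative (again by \eqref{eq:G1} and Theorem~\ref{thm:SLP}), so $\dR_-\subseteq\rho(A_N)$ and $m=m_N$ is holomorphic on $\dR_-$; for $\lambda\in\dR_-$ the value $s_1(l,\lambda)/c_1(l,\lambda)=m(\lambda,l,0)$ lies in the Weyl disk $D_l(\lambda)$ of Lemma~\ref{lem:Weyl_disks}, which in the limit point case shrinks to $\{m_N(\lambda)\}$ as $l\to b$; hence $q(\lambda)=\lim_{l\to b}s_1(l,\lambda)/c_1(l,\lambda)=m_N(\lambda)$, and this limit is nonnegative by Lemma~\ref{lem:2.4}(iii). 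Therefore $m_N$ is Herglotz--Nevanlinna, holomorphic and nonnegative on $\dR_-$, i.e.\ $m_N\in\cS$, and $m_N\equiv q$ on $\dC\setminus\dR_+$ by analytic continuation.

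For (v): suppose $R_2(b)<\infty$; as the system is singular this forces $R_1(b)=\infty$. Since $m_N\in\cS$ one may write $m_N(\lambda)=a+\int_0^\infty d\sigma(t)/(t-\lambda)$, and for every Stieltjes function a dominated convergence argument (using $\int_{(0,1]}d\sigma<\infty$) gives $\sigma(\{0\})=-\lim_{\lambda\to0}\lambda m_N(\lambda)$. I would compute this limit as follows. The constant triple $(1,0,0)$ solves \eqref{eq:sys_lin_rel} with $\lambda=0$, and $1\in L^2(R_2)$ precisely because $R_2(b)<\infty$, so $\bm e_0\coloneqq(\pi 1,0)^T\in A_N$ spans $\ker A_N$ and $\|\pi 1\|_{R_2}^2=R_2(b)$; the $\gamma$-field attached to $\Pi$ is $\gamma(\lambda)=\psi_1(\cdot,\lambda)$ by (iii). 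Applying \eqref{eq:1.9} to $\bm u_\lambda$ and $\bm e_0$ yields $\lambda\langle\psi_1(\cdot,\lambda),\pi 1\rangle_{R_2}=1$, that is, $\langle\gamma(\lambda),\pi 1\rangle_{R_2}=1/\lambda$. Fixing $\mu\in\rho(A_N)\cap\dR$, the $\gamma$-field identity $\gamma(\lambda)=(I+(\lambda-\mu)(A_N-\lambda)^{-1})\gamma(\mu)$ and $\lambda(A_N-\lambda)^{-1}\to-P_0$ strongly as $\lambda\to0$ (with $P_0$ the orthoprojection onto $\ker A_N$) give $\lambda\gamma(\lambda)\to\mu P_0\gamma(\mu)$; inserting this and $\langle\gamma(\lambda),\pi1\rangle=1/\lambda$ into the rewriting $\lambda m_N(\lambda)=\lambda m_N(\mu)+(\lambda-\mu)\langle\lambda\gamma(\lambda),\gamma(\mu)\rangle$ of \eqref{eq:weyl-id} and letting $\lambda\to0$ produces $\lim_{\lambda\to0}\lambda m_N(\lambda)=-\mu^2\|P_0\gamma(\mu)\|^2=-\mu^2|\langle\gamma(\mu),\pi1\rangle|^2/\|\pi1\|_{R_2}^2=-1/R_2(b)$. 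Hence $\sigma(\{0\})=1/R_2(b)$, and $m_N(\lambda)=-1/(R_2(b)\lambda)+\wt m(\lambda)$ with $\wt m(\lambda)=a+\int_{(0,\infty)}d\sigma(t)/(t-\lambda)\in\cS$ and $\lim_{y\downarrow0}y\wt m(iy)=0$ (dominated convergence again), which is \eqref{eq:Pol_mLP}.

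I expect (v) to be the genuine obstacle: parts (i)--(iv) are a routine specialisation of the boundary triple formalism to the Weyl disk / strong limit point machinery already developed, but (v) demands the exact residue of $m_N$ at $0$. The subtlety is that in the limit point case $c_1(\cdot,\lambda)$ need not belong to $L^1(R_2)$, so one cannot simply pass to the limit in the quotient $s_2(b,\lambda)/c_2(b,\lambda)$ as in the regular situation of Proposition~\ref{prop:BT_IS_N}(v); the eigenvalue-at-zero computation via the $\gamma$-field, with the identity $\langle\gamma(\lambda),\pi1\rangle_{R_2}=1/\lambda$ as its key input, is precisely what is needed to avoid that difficulty.
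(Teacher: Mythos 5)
Your argument for (i)--(iv) is essentially the paper's: nonnegativity via the first Green's identity \eqref{eq:G1} combined with the vanishing of $u_1u_2$ at $b$ (Theorem~\ref{thm:SLP}), the Green's identity \eqref{eq:1.9} for the maps \eqref{eq:BTLP} from \eqref{eq:G2} with the Wronskian term killed by the limit point criterion, and the computation $\Gamma_0\bm u_\lambda=1$, $\Gamma_1\bm u_\lambda=m_N(\lambda)$ for the Weyl solution. One small slip in (iv): the Weyl disks $D_l(\lambda)$ of Lemma~\ref{lem:Weyl_disks} are only defined for $\lambda\in\dC_+$, so you cannot place $s_1(l,\lambda)/c_1(l,\lambda)$ in $D_l(\lambda)$ for $\lambda\in\dR_-$; the paper identifies $q=m_N$ via the shrinking disks for non-real $\lambda$ only, and handles $\dR_-$ by the monotone bounded convergence of $s_1(x,\lambda)/c_1(x,\lambda)$ from Lemma~\ref{lem:2.4}(iv) together with Stieltjes--Vitali. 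Since you invoke exactly these ingredients elsewhere (monotonicity, analytic continuation), this is a fixable misattribution rather than a gap.

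Part (v) is where you genuinely diverge, and your route is correct. The paper argues concretely: it takes $m_N^x(\lambda)=s_2(x,\lambda)/c_2(x,\lambda)\to m_N(\lambda)$, uses the Riccati-type formula \eqref{eq:cs2} to write $-1/(\lambda m_N(\lambda))=\int_0^b\bigl(s_2(x,\lambda)s_{2+}(x,\lambda)\bigr)^{-1}dR_2(x)$, and lets $\lambda\uparrow 0$ by bounded convergence to get $\lambda m_N(\lambda)\to -1/R_2(b)$; your expectation that this quotient argument breaks down in the limit point case is not borne out, since no integrability of $c_1$ is needed there. Instead you compute the same limit abstractly: $\pi 1$ spans $\ker A_N$ when $R_2(b)<\infty$, the Green's identity gives $\lambda\langle\gamma(\lambda),\pi 1\rangle_{R_2}=1$, and the $\gamma$-field resolvent identity plus $\lambda(A_N-\lambda)^{-1}\to-P_0$ and \eqref{eq:weyl-id} yield $\lambda m_N(\lambda)\to-\mu^2\|P_0\gamma(\mu)\|^2=-1/R_2(b)$, after which the residue extraction from the Stieltjes representation gives \eqref{eq:Pol_mLP}. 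This buys independence from the convergence $s_2/c_2\to m_N$ (which the paper asserts without detail) and exhibits the pole as the eigenvalue of $A_N$ at $0$ with eigenfunction $\pi 1$, at the price of importing standard boundary-triple facts (the resolvent formula for $\gamma$ and the strong limit of $\lambda(A_N-\lambda)^{-1}$) that the paper does not state; the paper's computation is more elementary and self-contained.
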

\begin{proof}
\textbf{1.} At first we show ${\textrm (i)-(ii)}$.
Since \eqref{eq:IntSys} is in the limit point case at $b$,
\begin{equation*}
	\lim_{x\to b} [u,\conj{v}]_x = 0 \quad \text{for} \quad \bm{u} = \begin{bmatrix} \pi u_1 \\ \pi f \end{bmatrix},\ \bm{v} = \begin{bmatrix} \pi v_1 \\ \pi g \end{bmatrix}\in T_{\max}
\end{equation*}
and hence
the generalized Green's identity \eqref{eq:G2} is of the form
\begin{equation}\label{eq:GreenLP}
  \int_0^b (f\conj{v_1} - u_1\conj{g})\, dR_2(t) = -[u,\conj{v}]_0 = u_2(0)\conj{v_1(0)} - u_1(0)\conj{v_2(0)}.
\end{equation}
Therefore, the triple $\Pi$ in~\eqref{eq:BTLP} is a boundary triple for $T^*$.

It follows from the first Green's identity \eqref{eq:G1} and Lemma~\ref{lem:psi} that for every $\bm{u} \in T$ the identity~\eqref{eq:Green1_} holds and thus the linear relation $T$ is nonnegative.

\textbf{2.} Now ${\textrm (iii)}$ is shown.
In the limit point case there is only one linearly independent solution  $\psi(\cdot,\lambda)$ of the system $S[R_1,R_2]$ such that  $\psi_1(\cdot,\lambda)\in L^2(R_2)$, see~\eqref{eTWeyl_1},  and hence the defect subspace $\sN_\lambda(T^*)$ is spanned by the function $\psi_1(\cdot,\lambda)$.
Denote $\bm{u}(t,\lambda)=(\psi_1(\cdot,\lambda),\lambda\psi_1(\cdot,\lambda))^T\in\wh \sN_\lambda(T^*)$.
It follows from~\eqref{eq:BTLP} that
\begin{equation*}
	\Gamma_0\bm{u}(\cdot,\lambda) =  \psi_2(0,\lambda) = 1,\quad
	\Gamma_1\bm{u}(\cdot,\lambda) = -\psi_1(0,\lambda) = m_N(\lambda),
\end{equation*}
This yields formula~\eqref{eq:WFLP}.

\textbf{3.} Now we show ${\textrm (iv)}$.
If $\lambda\in\dR_-$ then it follows from Lemma~\ref{lem:2.4} that the function $\frac{s_1(x,\lambda)}{c_1(x,\lambda)}$ is increasing and  bounded from above. Therefore, the following limit
\begin{equation}\label{eq:mLP}
  q(\lambda) \coloneqq \lim_{x\to b}\frac{s_1(x,\lambda)}{c_1(x,\lambda)}
\end{equation}
exists and is nonnegative for every $\lambda\in\dR_-$. By Stieltjes-Vitaly theorem the function $q$ is holomorphic on $\dC\setminus[0,\infty)$. The function $q$ belongs to the Stieltjes class $\cS$, since it is nonnegative for every $\lambda\in\dR_-$. Since  $\frac{s_1(x,\lambda)}{c_1(x,\lambda)}$ belongs to the Weyl disc $D_x(\lambda)$ and the system $S[R_1,R_2]$ is limit point  at $b$, for every $\lambda\in\dC_+\cup\dC_-$ the following equality holds
\begin{equation}
  q(\lambda)=\lim_{x\to b}\frac{s_1(x,\lambda)}{c_1(x,\lambda)}=m_N(\lambda).
\end{equation}

\textbf{4.} Assume that $R_2(b)<+\infty$. Let us consider the family of von Neumann $m$-functions $m_N^x(\lambda)=\frac{s_2(x,\lambda)}{c_2(x,\lambda)}$
converging to $m_N(\lambda)$ as $x\to b-$.
Due to equality~\eqref{eq:cs2}
\begin{equation}\label{eq:c2s2}
 \frac{1}{ m_N^x(\lambda)}= \frac{c_2(x,\lambda)}{s_2(x,\lambda)}=\int_0^x\frac{-\lambda}{s_2(x,\lambda) s_{2+}(x,\lambda)}\,dR_2(x).
\end{equation}
Since $s_2(x,\lambda)\ge 1$ for $x\in[0,b)$ and $\lambda\in\dR_-$ there exists the limit
\begin{equation*}
\frac{-1}{\lambda m_N(\lambda)}=\lim_{x\to b}\frac{-c_2(x,\lambda)}{\lambda s_2(x,\lambda)}=\int_0^b\frac{1}{s_2(x,\lambda) s_{2+}(x,\lambda)}\,dR_2(x)
\end{equation*}
Due to Lemma~\ref{lem:2.4}
\begin{equation*}
	\lim_{\lambda\downarrow 0}\frac{1}{s_2(x,\lambda) s_{2+}(x,\lambda)} = 1,
	\quad \text{and} \quad
	\left|\frac{1}{s_2(x,\lambda) s_{2+}(x,\lambda)}\right|\le 1
	\quad \text{for} \quad x\in[a,b).
\end{equation*}
Hence one obtains by the Lebesgue bounded convergence Theorem
\begin{equation}
	\lim_{\lambda\to 0} \frac{1}{-\lambda m_N(\lambda)} = \int_{[0,b)}dR_2 = R_2(b).
\end{equation}
This implies (v).

\end{proof}

\subsection{The canonical singular continuation of a regular integral system}
If the integral system $S[R_1,R_2]$ is regular at $b$ then due to Remark~\ref{rem:Param} we can assume without loss of generality that $b<\infty$.
\begin{definition}
	For a regular system $S[R_1,R_2]$ with $b<\infty$ we define the extended functions
	\begin{equation}\label{eq:SingCont}
		\wt R_1(x) \coloneqq \left\{
		\begin{array}{cc}
	    	R_1(x) :& x\in [0,b], \\
	    	R_1(b) :& x\in [b,\infty),
	    \end{array}\right. \quad
		\wt R_2(x) \coloneqq \left\{
		\begin{array}{cc}
			R_2(x) :& x\in [0,b], \\
			R_2(b)+x-b :& x\in [b,\infty).
		\end{array}\right.
	\end{equation}
	The integral system $S[\wt R_1,\wt R_2]$ corresponding to \begin{equation} \label{eq:ContIntSys}
		\wt u(x,\lambda) = \wt u(0,\lambda)+\int_0^x
		\begin{bmatrix}
			0 & d\wt{R}_1(t) \\
			-\lambda d\wt{R}_2(t) & 0
		\end{bmatrix} \wt u(t,\lambda),\quad x\in [0, \infty)
	\end{equation}
	will be called \emph{the canonical singular continuation} of a regular integral system $S[R_1,R_2]$.
\end{definition}

\begin{proposition}\label{prop:SingCont}
Let the  integral system $S[R_1,R_2]$, see~\eqref{eq:IntSys}, be regular at $b<\infty$.
Then the principal Titchmarsh-Weyl coefficient $\wt q$ of its canonical singular continuation
$S[\wt R_1,\wt R_2]$
coincides with the principal Titchmarsh-Weyl coefficient $q$ of the system $S[R_1,R_2]$:
\begin{equation}\label{eq:wt_q}
	\wt q(\lambda) = q(\lambda), \quad \lambda\in\dC\setminus\dR.
\end{equation}
\end{proposition}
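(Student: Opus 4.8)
The plan is to compute the fundamental solutions of the extended system $S[\wt R_1,\wt R_2]$ explicitly on the appended half-line $[b,\infty)$ and to observe that the ratio $\wt s_1/\wt c_1$ is already frozen there. First I would note that on $[0,b)$ we have $d\wt R_1 = dR_1$ and $d\wt R_2 = dR_2$, so by the uniqueness part of Theorem~\ref{thm:Ex_Uniq} the fundamental solutions of the two systems agree on $[0,b]$,
\begin{equation*}
	\wt c(x,\lambda) = c(x,\lambda), \qquad \wt s(x,\lambda) = s(x,\lambda), \qquad x\in[0,b],
\end{equation*}
where the values at $b$ are the limits~\eqref{eq:lim_1}--\eqref{eq:lim_2}, which exist because $S[R_1,R_2]$ is regular. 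I would also record that the standing Assumptions~\ref{assum:R1R2} and~\ref{assum:surj} carry over to $S[\wt R_1,\wt R_2]$ (both are already witnessed on a subinterval $[0,b_0)\subset[0,b)$), that $S[\wt R_1,\wt R_2]$ is singular since $\wt R_2(\infty)=\infty$, and that it is in the limit point case at $\infty$ by Proposition~\ref{prop:1}, since $1\notin\cL^2(\wt R_2,[0,\infty))$. Hence $\wt q$ is well defined by~\eqref{eq:m_TW}, and by Proposition~\ref{prop:BT_IS_LP} it lies in the Stieltjes class.

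Next I would integrate~\eqref{eq:ContIntSys} over $[b,x)$ for $x>b$: since $d\wt R_1\equiv 0$ on $[b,\infty)$ the first component of any solution is constant there, while the second is $\wt u_2(x,\lambda)=\wt u_2(b,\lambda)-\lambda\wt u_1(b,\lambda)(x-b)$. Applying this to $\wt c$ and $\wt s$ and using the previous display gives
\begin{equation*}
	\wt c_1(x,\lambda) = c_1(b,\lambda), \qquad \wt s_1(x,\lambda) = s_1(b,\lambda), \qquad x\ge b,
\end{equation*}
so that $\wt s_1(x,\lambda)/\wt c_1(x,\lambda)$ does not depend on $x$ on $[b,\infty)$. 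Taking $\lambda\in\dR_-$, where $c_1(b,\lambda)\ge 1$ by~\eqref{eq:phi_ineq}, one obtains
\begin{equation*}
	\wt q(\lambda) = \lim_{x\to\infty}\frac{\wt s_1(x,\lambda)}{\wt c_1(x,\lambda)} = \frac{s_1(b,\lambda)}{c_1(b,\lambda)} = q(\lambda),
\end{equation*}
the last equality being~\eqref{eq:WFA_N1}. Since both $\wt q$ and $q$ are holomorphic on $\dC\setminus[0,\infty)$ (Propositions~\ref{prop:BT_IS_LP} and~\ref{prop:BT_IS_D}), the identity~\eqref{eq:wt_q} then propagates from $\dR_-$ to all of $\dC\setminus\dR$ by analytic continuation.

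There is no essential obstacle here; the only points deserving a word of care are the existence of the endpoint values $c_1(b,\lambda)$, $s_1(b,\lambda)$ (supplied by regularity) and the legitimacy of the quotients, which is why it is convenient to first argue on $\dR_-$ and only afterwards pass to $\dC\setminus\dR$. The explicit solution of~\eqref{eq:ContIntSys} on $[b,\infty)$ — constant first component, affine-in-$x$ second component — is exactly what makes the whole matter immediate: appending a Lebesgue piece to $\wt R_2$ while freezing $\wt R_1$ leaves $\wt s_1/\wt c_1$ unchanged from its value at $b$.
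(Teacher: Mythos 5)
Your proposal is correct and follows essentially the same route as the paper: the paper's proof likewise continues the solutions to $[b,\infty)$ with constant first component and affine second component, so that $\wt s_1/\wt c_1$ is frozen at its value $s_1(b,\lambda)/c_1(b,\lambda)=q(\lambda)$. Your additional checks (limit point at $\infty$, arguing first on $\dR_-$ and then extending by analyticity) are a careful supplement but do not change the argument.
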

\begin{proof}
Let the pair $u_1,u_2$ satisfy the integral system $S[R_1,R_2]$ for some $\lambda\in\dC\setminus\dR$
and let  $\wt u_1,\wt u_2$ be the continuations of $u_1,u_2$ to the interval $[0,+\infty)$ given by
\begin{equation}\label{eq:wt_uu}
	\left\{
		\begin{aligned}
			\wt u_1(x,\lambda)&=u_1(b,\lambda),\quad x\in(b,\infty), \\
			\wt u_2(x,\lambda)&=u_2(b,\lambda)-\lambda u_1(b,\lambda)(x-b),\quad x\in(b,\infty).
		\end{aligned}
	\right.
\end{equation}
Then the pair $\wt u_1,\wt u_2$ satisfies the integral system~\eqref{eq:ContIntSys}.
If $c_1,c_2$ and $s_1,s_2$ are solutions of ~\eqref{eq:IntSys} according to the initial conditions~\eqref{eq:csK} then the continuations $\wt c_1,\wt c_2$ and $\wt s_1,\wt s_2$ are
solutions of the integral system~\eqref{eq:ContIntSys} with the same initial conditions~\eqref{eq:csK}.

In view of~\eqref{eq:wt_uu} the principal Titchmarsh-Weyl coefficient $\wt q$ of the canonical singular continuation
$S[\wt R_1,\wt R_2]$ is of the form
\begin{equation*}
	\wt q(\lambda) =
	\lim_{x\to\infty}\frac{\wt s_1(x,\lambda)}{\wt c_1(x,\lambda)} =
	\lim_{x\to\infty}\frac{s_1(x,\lambda)}{c_1(x,\lambda)} =
	q(\lambda).
\end{equation*}
\end{proof}

\section{Dual integral systems}
\label{sec:dual_strings}
\begin{definition}\label{def:dual_systems}
	\emph{The dual system} $\dual$ to a singular system $S[R_1,R_2]$ is defined by changing the roles of $R_1$ and $R_2$ in~\eqref{eq:IntSys}, that is $\hat S[R_1,R_2] = S[R_2,R_1]$ and
\begin{equation} \label{eq:DualIntSys}
	\wh u(x,\lambda) = \wh u(0,\lambda) + \int_0^x
	\begin{bmatrix}
		0 & dR_1(t) \\
		-\lambda dR_2(t) & 0
	\end{bmatrix} \wh u(t,\lambda), \quad x \in [0,b).
\end{equation}
In case the system $S[R_1,R_2]$ is regular, we will denote by $\dual$ the dual to its canonical singular continuation:
 $\hat S[R_1,R_2]=S[\wt R_2, \wt R_1]$.
\end{definition}

Let $\wh{s}(\cdot,\lambda)$ and $\wh{c}(\cdot,\lambda)$ be the unique solutions of \eqref{eq:DualIntSys} satisfying the initial conditions
\begin{equation}
	\wh{c}_1(0,\lambda) = 1, \ \wh c_2(0,\lambda) = 0,
	\quad \text{and} \quad
	\wh{s}_1(0,\lambda) = 0, \ \wh s_2(0,\lambda) = 1.
\end{equation}

\begin{theorem} \label{thm:2}
Let $U(x,\lambda)$ and $\wh U(x,\lambda)$ be the fundamental matrices of the
system $S[R_1,R_2]$ and its dual system $\dual$ respectively.
Let $m_N$ and $\wh m_N$ be the Neumann $m$-functions of the
systems $S[R_1,R_2]$ and $\dual$ in the sense of Definitions~\ref{def:5.1}, \ref{def:Neumann_m}.
Then:
 \begin{enumerate}
   \item [(i)] The matrices $U(x,\lambda)$ and $\wh U(x,\lambda)$ are related by
\begin{equation}\label{eq:Fund_dir_dual}
	\wh U(x,\lambda) = D(\lambda)^{-1} U(x,\lambda) D(\lambda),
	\quad \text{where} \quad
	D(\lambda) =
	\begin{pmatrix}
		0 & -\lambda^{-1} \\
		1 & 0
	\end{pmatrix}.
\end{equation}
\item [(ii)] If the system $S[R_1,R_2]$ is singular at $b$,
 then
\begin{equation}\label{eq:whm_mLP}
  \wh m_N(\lambda)=-\frac{1}{\lambda m_{N}(\lambda )}.
\end{equation}

\item [(iii)] If $S[R_1,R_2]$ is regular at $b$, then
\begin{equation}\label{eq:whm_m}
  \wh m_N(\lambda )=\frac{\wh s_2(b,\lambda )}{\wh c_2(b,\lambda )}=-\frac{c_1(b,\lambda )}{\lambda  s_1(b,\lambda )}
  =  -\frac{1}{\lambda m_{ND}(\lambda )},
\end{equation}
where $m_{ND}(\lambda )$ is the Neumann $m$-function of system $S[R_1,R_2]$, subject to the boundary condition $u_1(b)=0$, see Definition~\ref{def:5.1D}.

\item [(iv)] The principal Titchmarsh-Weyl coefficients $q$ and $\wh q$ of $S[R_1,R_2]$ and $\dual$ are connected by the equality
\begin{equation}\label{eq:mS_whmS2}
	\wh q(\lambda) = -\frac{1}{\lambda q(\lambda)},
	\quad \lambda\in\dC\setminus\dR_+.
\end{equation}
\end{enumerate}
\end{theorem}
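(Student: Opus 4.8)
The plan is to derive the entire theorem from the matrix identity (i), which is a short $2\times2$ computation, and then to obtain (ii)--(iv) by letting $x\to b$ in explicit quotients of fundamental solutions. For (i) I would first bring the fundamental matrices of $S[R_1,R_2]$ and of its dual $\hat S[R_1,R_2]=S[R_2,R_1]$ to the form obtained from \eqref{eq:IntSys} by multiplying out $-J$, namely $U(x,\lambda)=I+\int_0^x\left[\begin{smallmatrix}0 & dR_1\\ -\lambda dR_2 & 0\end{smallmatrix}\right]U$ and $\wh U(x,\lambda)=I+\int_0^x\left[\begin{smallmatrix}0 & dR_2\\ -\lambda dR_1 & 0\end{smallmatrix}\right]\wh U$. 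Setting $V(x,\lambda):=D(\lambda)^{-1}U(x,\lambda)D(\lambda)$ one has $V(0,\lambda)=I$, and inserting the integral equation for $U$ together with the elementary identity $D(\lambda)^{-1}\left[\begin{smallmatrix}0 & dR_1\\ -\lambda dR_2 & 0\end{smallmatrix}\right]D(\lambda)=\left[\begin{smallmatrix}0 & dR_2\\ -\lambda dR_1 & 0\end{smallmatrix}\right]$ shows that $V$ solves the integral equation of the dual system; by the uniqueness part of Theorem~\ref{thm:Ex_Uniq} this gives $V=\wh U$, i.e.\ \eqref{eq:Fund_dir_dual}. Reading \eqref{eq:Fund_dir_dual} entrywise records the scalar relations
\begin{equation}\label{eq:dualscalar}
	\wh c_1=s_2,\qquad \wh s_1=-\lambda^{-1}c_2,\qquad \wh c_2=-\lambda s_1,\qquad \wh s_2=c_1 ,
\end{equation}
valid for all $x\in[0,b)$ and $\lambda\in\dC$; these drive everything else.

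For (ii) and (iv) in the singular case, \eqref{eq:dualscalar} yields $\wh s_1(x,\lambda)/\wh c_1(x,\lambda)=-c_2(x,\lambda)/(\lambda s_2(x,\lambda))$, so by the definition \eqref{eq:m_TW} of the principal coefficient $\wh q(\lambda)=-\lambda^{-1}\lim_{x\to b}c_2(x,\lambda)/s_2(x,\lambda)$. As the system is singular, $R_1(b)+R_2(b)=\infty$, and \eqref{eq:phi_ineq} then forces $c_1(x,\lambda)c_2(x,\lambda)\to+\infty$ as $x\to b$ for $\lambda\in\dR_-$; hence by \eqref{eq:cs12Id} the limits of $s_1/c_1$ and $s_2/c_2$ coincide, so $\lim_{x\to b}s_2(x,\lambda)/c_2(x,\lambda)=q(\lambda)=m_N(\lambda)$, the identification $q=m_N$ being Proposition~\ref{prop:BT_IS_LP}(iv) in the limit point subcase and Proposition~\ref{prop:BT_IS_N}(iv) in the limit circle subcase. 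Letting $x\to b$ gives $\wh q(\lambda)=-1/(\lambda q(\lambda))$ on $\dR_-$, and since both sides are holomorphic on $\dC\setminus\dR_+$ while $q$ is a nontrivial Stieltjes function and hence nonvanishing there, this extends to all of $\dC\setminus\dR_+$, proving \eqref{eq:mS_whmS2}. The dual system is again singular, so the same two propositions applied to $\hat S[R_1,R_2]$ give that its Neumann $m$-function equals $\wh q$; together with $m_N=q$ this is \eqref{eq:whm_mLP}.

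For (iii) and (iv) in the regular case I would pass to the canonical singular continuation $S[\wt R_1,\wt R_2]$, whose dual is by definition $\hat S[R_1,R_2]$ and which has the same principal coefficient $q$ by Proposition~\ref{prop:SingCont}; applying (i) to it gives \eqref{eq:dualscalar} for the dual solutions $\wh c_i,\wh s_i$ on $[0,\infty)$, and on $[0,b]$ these coincide with $c_i,s_i$. Since $d\wt R_1\equiv0$ on $[b,\infty)$, the components $\wh c_2,\wh s_2$ are constant there while $\wh c_1,\wh s_1$ are affine in $x$; consequently $\wh q(\lambda)=\lim_{x\to\infty}\wh s_1/\wh c_1=\wh s_2(b,\lambda)/\wh c_2(b,\lambda)$, while the same constancy makes the defining condition of Definition~\ref{def:5.1} (vanishing of the second component of $\wh s(\cdot,\lambda)-\wh m_N(\lambda)\wh c(\cdot,\lambda)$ at the singular endpoint) reduce to $\wh s_2(b,\lambda)-\wh m_N(\lambda)\wh c_2(b,\lambda)=0$, whence $\wh m_N(\lambda)=\wh q(\lambda)$. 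Substituting \eqref{eq:dualscalar} at $x=b$ then gives $\wh q(\lambda)=\wh s_2(b,\lambda)/\wh c_2(b,\lambda)=-c_1(b,\lambda)/(\lambda s_1(b,\lambda))$, which with $m_{ND}(\lambda)=s_1(b,\lambda)/c_1(b,\lambda)$ from \eqref{eq:WFA_N1} is \eqref{eq:whm_m}; since $q=m_{ND}$ for a regular system (the text following \eqref{eq:WFA_N1}), \eqref{eq:mS_whmS2} follows, equivalently from the singular case applied to $S[\wt R_1,\wt R_2]$ via $\wt q=q$.

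The individual steps are all elementary; the real work is bookkeeping — keeping track of which Hilbert space ($L^2(R_2)$ for the direct system, $L^2(R_1)$ for the dual) and which of Definitions~\ref{def:5.1}, \ref{def:5.1D}, \ref{def:Neumann_m} governs the Neumann $m$-function of each system, and in the regular case checking that the continuation argument is undisturbed by the fact that $\wt R_1$ carries no mass beyond $b$. The one analytic point I would spell out carefully is that $\lim_{x\to b}s_2/c_2=q$ holds throughout the singular case — both in the limit point and in the limit circle subcases — which rests on $c_1c_2\to\infty$, equivalently on the shrinking of the Weyl discs $D_l(\lambda)$ of Lemma~\ref{lem:Weyl_disks}.
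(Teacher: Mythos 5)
Your proposal is correct, and for the central identities it takes a genuinely different route from the paper. Part (i) is the same conjugation-plus-uniqueness computation, recorded entrywise as \eqref{eq:wh_cs}. The difference lies in the singular case: the paper proves (ii) first, by transplanting the Weyl solution --- it shows that $\wh\psi=\wh s+\tfrac{1}{\lambda m_N}\wh c$ has first component $\tfrac{1}{\lambda m_N}\psi_2\in\cL^2(R_1)$, which rests on the Dirichlet/strong limit point property of Theorem~\ref{thm:SLP} in the limit point subcase and on the Evans--Everitt Lemma~\ref{lem:psi} in the limit circle subcase, with a three-case analysis according to the LP/LC character of the system and its dual; (iv) is then deduced from (ii). You reverse the order: you prove (iv) directly from the scalar relations of (i) via $\wh s_1/\wh c_1=-\lambda^{-1}c_2/s_2$, the identity \eqref{eq:cs12Id}, and $c_1c_2\to\infty$ on $\dR_-$ coming from \eqref{eq:phi_ineq} (the same device used in the proof of Proposition~\ref{prop:BT_IS_N}), extend by the identity theorem, and then obtain (ii) from (iv) by invoking Proposition~\ref{prop:BT_IS_N}(iv) and Proposition~\ref{prop:BT_IS_LP}(iv) for both the system and its dual. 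This is shorter and avoids all $L^2$-membership arguments, but it leans on the identifications $m_N=q$ and $\wh m_N=\wh q$ (in particular on the applicability of those propositions to the dual system, which the paper also uses, but only in its Step 6) and it does not produce the explicit dual Weyl solution, which in the paper additionally yields Corollary~\ref{cor:psi}. Your regular-case treatment --- passing to the continuation, using Proposition~\ref{prop:SingCont}, the constancy of $\wh c_2,\wh s_2$ beyond $b$, and the fact that the continued dual is limit circle at $\infty$ --- matches the paper's in substance. Two small points you should spell out, though at the paper's own level of rigor they are routine: that $q(\lambda)>0$ on $\dR_-$ (so the reciprocal in $\wh q=-1/(\lambda q)$ is legitimate; this follows from \eqref{eq:c1s1} together with Assumption~\ref{assum:surj}), and that $s_1(b,\lambda)\neq0$ when you divide in \eqref{eq:whm_m}, which you can secure by arguing first for $\lambda\in\dR_-$ and then continuing analytically.
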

\begin{proof}
\textbf{1.} At first (i) is shown.
A straightforward calculation shows that the solutions $\wh{s}(\cdot,\lambda)$ and
$\wh{c}(\cdot,\lambda)$ of \eqref{eq:DualIntSys} are related to the
solutions ${s}(\cdot,\lambda)$ and
${c}(\cdot,\lambda)$ of \eqref{eq:IntSys} by the equalities
\begin{equation}\label{eq:wh_cs}
  \begin{bmatrix}
    \wh{c}_1(\cdot,\lambda) \\
    \wh{c}_2(\cdot,\lambda)
  \end{bmatrix}
=  \begin{bmatrix}
    {s}_2(\cdot,\lambda) \\
    -\lambda{s}_1(\cdot,\lambda)
  \end{bmatrix},
  \quad
  \begin{bmatrix}
    \wh{s}_1(\cdot,\lambda) \\
    \wh{s}_2(\cdot,\lambda)
  \end{bmatrix}
  =  \begin{bmatrix}
   -\lambda^{-1} {c}_2(\cdot,\lambda) \\
    {c}_1(\cdot,\lambda)
  \end{bmatrix}.
\end{equation}
The equality~\eqref{eq:Fund_dir_dual} follows from \eqref{eq:wh_cs}.

System $S[R_1,R_2]$ is regular at $b$ if and only if both $S[R_1,R_2]$ are in the limit circle case at $b$. Therefore the proof of (ii) can be splitted into the following three cases \textbf{2--4}.

\textbf{2.} \emph{Both $S[R_1,R_2]$ and $\dual$ are in the limit point case at $b$$:$}\\
Let $m_N$ be the Neumann $m$-function of the
systems $S[R_1,R_2]$, see Definition~\ref{def:Neumann_m},
and let $\psi_1(\cdot,\lambda)$ be the corresponding Weyl solution of the system $S[R_1,R_2]$.
Then the vector function
\begin{equation}\label{eq:wh_psi}
  \wh\psi(\cdot,\lambda) \coloneqq
    \begin{bmatrix}
    \wh{s}_1(\cdot,\lambda ) \\
    \wh{s}_2(\cdot,\lambda )
  \end{bmatrix}
  +\frac{1}{\lambda m_N(\lambda )}
      \begin{bmatrix}
    \wh{c}_1(\cdot,\lambda ) \\
    \wh{c}_2(\cdot,\lambda )
  \end{bmatrix}
  =  \begin{bmatrix}
   -\lambda^{-1} {c}_2(\cdot,\lambda )+ \lambda^{-1} m_N(\lambda)^{-1} s_2(\cdot,\lambda ) \\
    {c}_1(\cdot,\lambda ) - m_N(\lambda)^{-1} s_1(\cdot,\lambda )
  \end{bmatrix}
\end{equation}
is a solution of the system \eqref{eq:DualIntSys}. Moreover, due to Lemma~\ref{thm:SLP} $\wh \psi_{1}(\cdot,\lambda)= \frac{1}{\lambda m_N(\lambda)} \psi_{2}(\cdot,\lambda)$ belongs to $L^2(R_1)$.
Therefore, $\wh\psi_1(\cdot,\lambda)$ is the Weyl solution of the system $\dual$ and the function
$-\frac{1}{\lambda m_N(\lambda)}$ is the Neumann $m$-function of the system~$\dual$.

\textbf{3.} \emph{$S[R_1,R_2]$ is in the limit circle case and $\dual$ is in the limit point case at $b$$:$}\\
Let the function $\psi^N$ be defined by~\eqref{eTWeyl2}.
Since \eqref{eq:IntSys} is in the limit circle case it follows from Lemma~\ref{lem:psi} that $\psi^N_2\in L^2(R_1)$.
Hence,
$\wh\psi(\cdot,\lambda)$ is a solution of the system $\dual$, such that $\wh \psi_{1}(\cdot,\lambda)=\frac{1}{\lambda m_N(\lambda )}\psi^N_{2}(\cdot,\lambda)\in L^2(R_1)$.
Therefore, $\wh \psi_1$ is the {\em Weyl solution} of the system $\dual$ and the function
$-\frac{1}{\lambda m_N(\lambda )}$ is the Neumann $m$-function of the systems $\dual$.

\textbf{4.} \emph{$S[R_1,R_2]$ is in the limit point case and $\dual$ is in the limit circle case at $b$$:$}\\
As was shown on Step \textbf{3} the Neumann $m$-function $\wh m_{N}(\lambda )$ of the
systems $\dual$ subject to the boundary condition $\wh\psi_{2}(b,\lambda)=0$ is connected with
the Neumann $m$-function $m_{N}(\lambda )$ of the system $S[R_1,R_2]$ by the equality
\begin{equation*}
	m_N(\lambda) = -\frac{1}{\lambda \wh m_{N}(\lambda)}
\end{equation*}
which is equivalent to~\eqref{eq:whm_mLP}.

\textbf{5.} Now (iii) is shown.
Let $m_{ND}(\lambda )$ be the Neumann $m$-function of the system $S[R_1,R_2]$, subject to the boundary condition~\eqref{eq:m_functLCD} and let $\psi_1^{ND}(\cdot,\lambda )$ be the corresponding {\em Weyl solution} of the system $S[R_1,R_2]$ defined by~\eqref{eTWeylND}. By definition $\psi^{ND}_{1}(b,\lambda )=0$.
Then the vector function
\begin{equation*}
	\wh\psi(\cdot,\lambda) \coloneqq
		\begin{bmatrix}
		\wh{s}_1(\cdot,\lambda) \\
		\wh{s}_2(\cdot,\lambda)
	\end{bmatrix}
	+\frac{1}{\lambda m_{ND}(\lambda)}
	\begin{bmatrix}
		\wh{c}_1(\cdot,\lambda) \\
		\wh{c}_2(\cdot,\lambda)
	\end{bmatrix}
	= -\frac{1}{m_{ND}(\lambda)}
	\begin{bmatrix}
		-\frac{1} {\lambda }\left(s_2(\cdot,\lambda)- m_{ND}(\lambda) c_2(\cdot,\lambda)\right) \\
		s_1(\cdot,\lambda)- m_{ND}(\lambda) c_1(\cdot,\lambda)
	\end{bmatrix}
\end{equation*}
is a solution of the system \eqref{eq:DualIntSys} such that $\wh \psi_{2}(b,\lambda)=\psi^{ND}_{1}(b,\lambda)=0$.
 Therefore, the function $\frac{-1}{\lambda m_{ND}(\lambda )}$ is the Neumann $m$-function of the
systems $\dual$, subject to the boundary condition $\wh \psi_{2}(b,\lambda)=0$.

\textbf{6.} Finally (iv) is shown.
If the integral system $S[R_1,R_2]$ is singular at $b$ then the Neumann $m$-function $m_N$ (resp. $\wh m_N$)
coincides with the principal Titchmarsh-Weyl coefficient $q$ of the system~$S[R_1,R_2]$
(resp. $\wh q$ of the system~ $\dual$), see Propositions~\ref{prop:BT_IS_N}, \ref{prop:BT_IS_LP}.
Therefore, \eqref{eq:mS_whmS2} is implied by \eqref{eq:whm_mLP}.

If the system $S[R_1,R_2]$ is regular at $b$ then by Propositions~\ref{prop:SingCont} $q$ coincides with
the principal Titchmarsh-Weyl coefficient $\wt q$ of the canonical singular continuation
$S[\wt R_1,\wt R_2]$ of the system $S[R_1,R_2]$ to $[0,+\infty)$, see~\eqref{eq:SingCont}.
By the statement of the above paragraph the principal Titchmarsh-Weyl coefficient $\wh q$ of the dual system $S[\wt R_2,\wt R_1]$ is of the form
\begin{equation*}
	\wh q(\lambda) = -\frac{1}{\lambda \wt q(\lambda)} =
	-\frac{1}{\lambda q(\lambda )},
\end{equation*}
and \eqref{eq:mS_whmS2} is shown.
 \end{proof}

Since the relation of duality for integral systems is reflexive one derives from the proof of Theorem~\ref{thm:2} the following statement.
\begin{corollary}\label{cor:psi}
Let the system $S[R_1,R_2]$ be in the limit point case and let $\dual$ be in the limit circle case at $b$.
Let $\psi_1(\cdot,\lambda)$ be the corresponding Weyl solution of the system $S[R_1,R_2]$.
Then
\begin{equation}\label{eq:psi_0}
	\lim_{x\to b} \psi_1(x,\lambda) = 0.
\end{equation}
\end{corollary}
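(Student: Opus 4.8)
The plan is to deduce \eqref{eq:psi_0} from the correspondence between the Weyl solutions of a system and of its dual that is already worked out in the proof of Theorem~\ref{thm:2}, using that the duality for integral systems is reflexive, so that $S[R_1,R_2]$ is the dual of $\dual$. Since $S[R_1,R_2]$ is limit point it is in particular singular at $b$, hence Theorem~\ref{thm:2}(ii) applies and gives $m_N(\lambda)=-1/(\lambda\wh m_N(\lambda))$, where $m_N$ is the Neumann $m$-function of $S[R_1,R_2]$ from Definition~\ref{def:Neumann_m} and $\wh m_N$ the Neumann $m$-function of $\dual$ from Definition~\ref{def:5.1}. As $\dual$ is limit circle, the associated Weyl solution $\wh\psi^N(\cdot,\lambda)=\wh s(\cdot,\lambda)-\wh m_N(\lambda)\,\wh c(\cdot,\lambda)$ of $\dual$ satisfies $\wh\psi^N_2(b,\lambda)=0$ by Definition~\ref{def:5.1}.

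Next I would invert the relations~\eqref{eq:wh_cs} so as to express the solutions of $S[R_1,R_2]$ through those of $\dual$, namely $s_1(\cdot,\lambda)=-\lambda^{-1}\wh c_2(\cdot,\lambda)$ and $c_1(\cdot,\lambda)=\wh s_2(\cdot,\lambda)$, and substitute these together with $m_N=-1/(\lambda\wh m_N)$ into $\psi_1=s_1-m_N c_1$. A short computation---this is Step~3 of the proof of Theorem~\ref{thm:2} with the roles of the two systems interchanged---collapses it to
\[
	\psi_1(\cdot,\lambda)=\frac{1}{\lambda\wh m_N(\lambda)}\left(\wh s_2(\cdot,\lambda)-\wh m_N(\lambda)\,\wh c_2(\cdot,\lambda)\right)=\frac{1}{\lambda\wh m_N(\lambda)}\,\wh\psi^N_2(\cdot,\lambda).
\]
That this $\psi_1$ is indeed the Weyl solution of the limit point system $S[R_1,R_2]$ in the sense of Definition~\ref{def:Neumann_m} follows from $\wh\psi^N_2\in\cL^2(R_2)$, which is Lemma~\ref{lem:psi} applied to $\dual$ (with the roles of $R_1$ and $R_2$ swapped).

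Finally, letting $x\to b$ and using $\wh\psi^N_2(b,\lambda)=0$ one obtains $\lim_{x\to b}\psi_1(x,\lambda)=0$, which is \eqref{eq:psi_0}. The only point requiring care is the bookkeeping of the substitution~\eqref{eq:wh_cs}---matching each hatted solution with the correct unhatted one and tracking the powers of $\lambda$, and making sure the normalization fixed by the $s$-term reproduces the Weyl solution of Definition~\ref{def:Neumann_m}---but once that is set up the conclusion is immediate, and the rest is a transcription of an argument already present in the proof of Theorem~\ref{thm:2}.
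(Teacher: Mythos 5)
Your argument is correct and is essentially the paper's own proof: the paper likewise derives $\psi_1(\cdot,\lambda)=\tfrac{1}{\lambda\wh m_N(\lambda)}\,\wh\psi^N_2(\cdot,\lambda)$ by reflexivity of the duality (Step 3 of the proof of Theorem~\ref{thm:2} with the roles of the two systems interchanged) and concludes from $\wh\psi^N_2(b,\lambda)=0$. Your explicit bookkeeping via \eqref{eq:wh_cs} and the $\cL^2(R_2)$-membership of $\wh\psi^N_2$ from Lemma~\ref{lem:psi} applied to $\dual$ just spells out what the paper cites implicitly.
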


\begin{proof}
	As it was mentioned in the proof of Theorem~\ref{thm:2} (Step \textbf{3}), the Weyl solution $\psi(\cdot,\lambda)$ of the system $S[R_1,R_2]$ is connected with the Weyl solution $\wh\psi^N(\cdot,\lambda)$ of the dual system \eqref{eq:IntSys} by the equality $\psi_{1}(\cdot,\lambda)=\frac{1}{\lambda\wh m_N(\lambda )}\wh\psi^N_{2}(\cdot,\lambda)$.
	Since $\wh\psi^N_{2}(b,\lambda)=0$ one obtains~\eqref{eq:psi_0}.
\end{proof}
\begin{remark}
	Formula~\eqref{eq:whm_mLP} was proven in \cite{KWW06} for Krein strings and in \cite{Kost13} for integral systems.
	However, in \cite{Kost13} it was overlooked that in the regular case the formula \eqref{eq:whm_mLP} fails to hold and should be replaced by~\eqref{eq:whm_m}.
\end{remark}

\end{document}